\let\dotlessi\i
\newcommand{\mbZ}{\mathbb{Z}}
\newcommand{\mbC}{\mathbb{C}}
\newcommand{\mbK}{\mathbb{K}}
\newcommand{\Z}{\mathbb{Z}}
\newcommand{\mbQ}{\mathbb{Q}}
\newcommand{\one}{\mathbf{1}}
\newcommand{\ot}{\otimes}
\newcommand{\inv}{^{-1}}
\newcommand{\ol}{\overline}
\newcommand{\hs}{\hat{\sigma}}
\newcommand{\id}{\operatorname{id}}
\newcommand{\FSexp}{\operatorname{FSexp}}
\newcommand{\GalQ}[1]{\Gal(\mbbQ_{#1}/\mbbQ)}
\newcommand{\dimC}{\dim{\mcC}}
\newcommand{\lcm}{\operatorname{lcm}}
\renewcommand\o{\otimes}
\title[Classification of Weakly Integral Modular Categories]{On the Classification of Weakly Integral Modular Categories}
\date{\today}
\author[P. Bruillard]{Paul Bruillard}
\email{pjb2357@gmail.com}
\address{Pacific Northwest National Laboratory, 902 Battelle Boulevard,
Richland, WA U.S.A}
\author[C. Galindo]{C\'{e}sar Galindo}
\email{cn.galindo1116@uniandes.edu.co}
\address{Departamento de Matem\'aticas, Universidad de los Andes, Bogot\'a, Colombia.}
\author[S.-H. Ng]{Siu-Hung Ng}
\email{rng@math.lsu.edu}
\address{Department of Mathematics, Louisiana State University, Baton Rouge, LA
    U.S.A.}
\author[J. Plavnik]{Julia Y. Plavnik}
\email{plavnik@famaf.unc.edu.ar}
\address{Facultad de Matem\'{a}tica, Astronom\'{\dotlessi}a y F\'{\dotlessi}sica, Universidad Nacional de C\'{o}rdoba, CIEM CONICET, (5000) Ciudad Universitaria, C\'{o}rdoba, Argentina}
\author[E. Rowell]{Eric C. Rowell}
\email{rowell@math.tamu.edu}
\address{Department of Mathematics,
    Texas A\&M University,
    College Station, TX
    U.S.A.}
\author[Z. Wang]{Zhenghan Wang}
\email{zhenghwa@microsoft.com}
\address{Microsoft Research Station Q and Department of Mathematics,
    University of California,
    Santa Barbara, CA
    U.S.A.}
\begin{document}
\begin{abstract}
We classify all modular categories of dimension $4m$, where $m$ is an odd square-free integer, and all ranks $6$ and $7$ weakly integral modular categories. This completes the classification of weakly integral modular categories through rank $7$. Our results imply that all integral modular categories of rank at most $7$ are pointed (that is, every simple object has dimension $1$). All strictly weakly integral (weakly integral but non-integral) modular categories of ranks $6$ and $7$ have dimension $4m$, with $m$ an odd square free integer, so their classification is an application of our main result.  The classification of rank $7$ integral modular categories is facilitated by an analysis of two actions on modular categories: the Galois group of the field generated by the entries of the $S$-matrix and the group of isomorphism classes of invertible simple objects.  The interplay of these two actions is of independent interest, and we derive some valuable arithmetic consequences from their actions.
\end{abstract}
\thanks{The results obtained in this paper were mostly obtained while all 6 authors were at the American Institute of Mathematics, participating in a SQuaRE.  We would like to thank that institution for their hospitality and encouragement. C. Galindo was partially supported by the FAPA funds from vicerrectoria de investigaciones de la Universidad de los Andes, S.-H. Ng  by NSF DMS-1303253 and DMS-1501179, J.Plavnik by CONICET, ANPCyT and Secyt-UNC, E. Rowell by NSF grant DMS-1108725, and Z. Wang by NSF grant DMS-1108736.  PNNL-SA-110195.}
\maketitle

\section{Introduction}
A modular category over $\mathbb{C}$ is a non-degenerate braided spherical fusion category over $\mathbb{C}$.  In this article, we are interested in extending the classification of modular categories over $\mathbb{C}$ of low rank, under some additional constraints on the dimensions of simple objects.
A fusion category $\mcC$ is called \textbf{integral} if the Frobenius-Perron dimensions $\FPdim(X)$ of simple objects $X$ are integers, whereas $\mcC$ is \textbf{weakly integral} if $\FPdim(X)^2\in\mbZ$ for all simple objects $X$.
 We completely classify weakly integral categories when the number of simple objects (up to isomorphism) is $6$ or $7$.  This is facilitated by a more general theorem in which we classify modular categories of dimension $4m$, where $m$ is an odd square-free integer.

Broad classes of integral modular categories arise from finite groups, for
instance the representation categories $\Rep(D^\omega G)$ of twisted Drinfeld doubles $D^\omega G$ of finite groups.  Known constructions of weakly integral modular categories that are not integral (called \textbf{strictly weakly integral}) are somewhat fewer.  Common examples are the modular categories $SO(N)_2$ associated with quantum groups $U_q\mathfrak{so}_N$ at $q=e^{\pi i/(2N)}$, for $N$ odd and $q=e^{\pi i/N}$, for $N$ even (see e.g. \cite{NR}), Drinfeld centers of weakly integral modular categories and $\mbbZ_2$-equivariantizations (see \cite{GNN} and \cite{DGNO1}) of the $G$-crossed extensions of Tambara-Yamagami categories \cite{TY} associated to a group $A$ of odd order.  Indeed, by the proof of Theorem \ref{t:4m}, any strictly weakly integral modular category of dimension $4k$ with $k>1$ an odd
square-free integer and with exactly two isomorphism classes of invertible objects must be of the form $TY(\Z_k,\chi,\nu)^{\mbbZ_2}$, as a braided fusion category.

The $\mbbZ_2$-equivariantization mentioned above is a special case of a general method, called \textbf{gauging} \cite{BBCW, CGPW}, for producing new weakly integral modular categories.  Given a weakly integral modular category $\mathcal{C}$ with a categorical action (or symmetry) of a finite group $G$, then gauging the $G$ symmetry of $\mathcal{C}$ as defined in \cite{BBCW, CGPW} results in a weakly integral modular category denoted $\mathcal{C}//G$.  An interesting weakly integral modular category is obtained by gauging the $S_3$ symmetry of $SO(8)_1$ in \cite{CGPW} described as Example J in \cite{BBCW}.  While representations of the braid groups from weakly integral modular categories are all conjectured to have finite images \cite{NR}, they are still useful for topological quantum computation \cite{NR}.  Actually, weakly integral modular categories are potentially more easily realized in nature than non-weakly integral categories. Moreover, they can be made universal for quantum computation in some cases 
such as the metaplectic modular categories \cite{CW}.

The following classes of weakly integral modular categories appear in our
classification:
\begin{enumerate}
  \item A \textbf{pointed} modular category is a modular category in which all simple objects are \textbf{invertible}, that is, $X\ot X^*\cong \1$.  Such categories have
    the same fusion rules as $\Rep(A)$ where $A$ is a finite abelian
    group. A \textbf{cyclic} modular category is a pointed modular
    category with the same fusion rules as $\Rep(\mbbZ_{n})$.
  \item An \textbf{Ising} modular category $\mcI$ is a non-pointed modular
    category with $\dim\mcI=4$ (see \cite[Appendix B]{DGNO2} where such
    categories are classified).  Such a category has rank $3$ with two simple
    classes of dimension $1$ and one of dimension $\sqrt{2}$.
  \item A \textbf{metaplectic} modular category $\mcC$ is any modular category
    with the same fusion rules as
    $SO(N)_2$ for $N$ odd (see e.g. see \cite{NHW}). These rank $\frac{N+7}{2}$, $4N$-dimensional categories have two
    $1$-dimensional objects and two objects of dimension $\sqrt{N}$, while the
    remaining $\frac{N-1}{2}$ objects have dimension $2$.  For example,
    $TY(\mbbZ_N,\chi,\nu)^{\mbbZ_2}$ for $N$ odd is a metaplectic modular category (see \cite{GNN}).
\end{enumerate}
\begin{rmk}
There are exactly eight unitary Ising categories \cite{kitaev}.  {\it The}
Ising theory refers to the one with central charge $c=\frac{1}{2}$ and the
topological twist of the non-pointed simple object is
$\theta=e^{\frac{2\pi i}{16}}$.
\end{rmk}

Our results are summarized in:
\begin{theorem}
Suppose that $\mcC$ is a weakly integral modular category either of rank $\leq
7$ or with $\FPdim(\mcC)=4m$, where $m$ is an odd square-free integer.  Then
$\mcC$ is equivalent to a (Deligne) product of the following: pointed
categories, Ising categories and metaplectic categories.
\end{theorem}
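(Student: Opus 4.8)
The plan is to use the overlap of the two hypotheses and to funnel everything into the dimension $4m$ classification. If $\FPdim(\mcC)=4m$ with $m$ odd and square-free, there is nothing to prove, since the conclusion is exactly Theorem~\ref{t:4m}. So I would focus on the rank $\le 7$ hypothesis and split it according to whether $\mcC$ is integral or strictly weakly integral.

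For the strictly weakly integral case I would first invoke the canonical grading of a weakly integral fusion category: writing $\FPdim(X_i)=\sqrt{n_i}$ and letting $s_i$ denote the square-free part of $n_i$, the rule $X_i\mapsto s_i$ gives a faithful grading of $\mcC$ by an elementary abelian $2$-group $E\subseteq\bigoplus_p\mbZ/2\mbZ$ whose trivial component $\mcC_e$ is the maximal integral fusion subcategory, and all of whose components share the Frobenius--Perron dimension $\FPdim(\mcC)/|E|$. Since $\mcC$ is strictly weakly integral, $E\ne 1$. I would then peel off modular factors through M\"uger's decomposition $\mcC\simeq\mathcal{B}\boxtimes C_{\mcC}(\mathcal{B})$, available for any non-degenerate fusion subcategory $\mathcal{B}$; since each split lowers the rank, after finitely many steps $\mcC$ becomes a Deligne product of prime (indecomposable) factors. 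A count of the graded components together with the bound on the rank then shows that every prime strictly weakly integral factor has $\FPdim=4m$ for some odd square-free $m$ (the case $m=1$ being Ising), so Theorem~\ref{t:4m} applies to it, while the pointed factors that split off are already of the required form. For ranks $\le 5$ I would cross-check the finitely many possibilities against the existing low-rank classification.

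The heart of the matter is the integral case, where I would establish the sharper statement that an integral modular category $\mcC$ of rank $\le 7$ is necessarily pointed, hence of the required form. Suppose instead that $\mcC$ is integral, of rank $\le 7$, and non-pointed, so the group $G$ of isomorphism classes of invertible objects acts on the simple objects by tensoring and some simple $X$ has $d:=\FPdim(X)\ge 2$. The two actions highlighted in the abstract now intervene. The $G$-action partitions the non-invertible simples into orbits of size dividing $|G|$ on which $\FPdim$ is constant, while the Galois group $\Gal(\mbQ(S)/\mbQ)$ of the field generated by the $S$-matrix entries acts through permutations $\hat\sigma$ of the simple objects; since the integer dimensions are $\Gal$-fixed, $\hat\sigma$ preserves $\FPdim$, and it is tied to the $S$-matrix through the Verlinde formula. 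Feeding in the dimension identity $\FPdim(\mcC)=|G|+\sum_i d_i^2$ (with at most $7$ summands), the divisibilities $|G|\mid\FPdim(\mcC)$ and $d_i\mid\FPdim(\mcC)$, and the arithmetic consequences of the interaction between the $G$-orbits and the $\Gal$-orbits, I would cut the problem down to finitely many candidate dimension vectors and rule out each non-pointed one.

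The step I expect to be the genuine obstacle is precisely this elimination: proving that the joint constraints coming from the invertible-object action and from the Galois action cannot be met by any non-pointed dimension vector of length $\le 7$. This is where the valuable arithmetic consequences of the two actions are indispensable---congruences linking $|G|$, $\FPdim(\mcC)$, and the $d_i$ that go well beyond plain divisibility---and for the few most resistant vectors I anticipate also needing the compatibility of $\hat\sigma$ with the ribbon twists recorded in the $T$-matrix. Once the integral case is closed, the three families of pointed, Ising, and metaplectic categories are each modular and their Deligne products form a closed class, so in every case $\mcC$ lies in that class; this finishes the classification through rank $7$ and recovers the strictly weakly integral ranks $6$ and $7$ as instances of the dimension $4m$ result.
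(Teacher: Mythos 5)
Your overall architecture coincides with the paper's: reduce to Theorem \ref{t:4m} for the dimension $4m$ hypothesis, and for rank $\le 7$ split into the integral case (where one proves the category is pointed) and the strictly weakly integral case (where one shows the dimension is of the form $4m$, $m$ odd square-free, so that Theorem \ref{t:4m} applies). But as written the proposal is a roadmap rather than a proof: both pillars it rests on are asserted, not established. First, the claim that ``a count of the graded components together with the bound on the rank'' shows every prime strictly weakly integral factor has $\FPdim=4m$ does not follow from the grading alone; the grading only tells you each component has dimension $\FPdim(\mcC)/|E|$, and pinning down $\FPdim(\mcC)$ requires knowing the dimensions of the simple objects in $\mcC_{ad}$. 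In the paper this is done via the classification of rank $3$ and $4$ premodular categories for rank $6$, and via Proposition \ref{p:bosonic_obj} (a Tannakian $\mbZ_2\subset G(\mcC)$ forces $\dim\mcC=28$ through a delicate congruence analysis of the dimension equation mod $8$) together with a case analysis on $|G(\mcC)|\in\{2,4\}$ for rank $7$. Note also that for rank $7$ your ``peeling off modular factors'' step is vacuous --- $7$ is prime, so the category admits no nontrivial Deligne factorization, and the pointed part may well be symmetric rather than modular (the $\sVec$ and Tannakian subcases must be treated separately, as the paper does).

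Second, and more seriously, you explicitly defer the integral rank $7$ case (``the step I expect to be the genuine obstacle''), and this is precisely where the bulk of the paper's work lies. Plain divisibility and the dimension equation do not suffice; the paper needs the Cauchy theorem for modular categories (prime divisors of $\dim\mcC$ are those of $\FSexp(\mcC)$), the bound $p\le 7$ with $v_p(\FSexp(\mcC))=1$ obtained from the lengths of $p$-support cycles of Galois automorphisms (Lemma \ref{l:bounded_p}), and above all Theorem \ref{t:gal}, which converts the existence of a unique $p$-support cycle into the exact formula $\dim\mcC=\bigl(\tfrac{l}{p-1}\bigr)^2 d_1^4 p$ and then classifies the possibilities. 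Your mention of ``congruences linking $|G|$, $\FPdim(\mcC)$, and the $d_i$'' and of ``compatibility of $\hat\sigma$ with the ribbon twists'' correctly names the tools, but the actual elimination --- ruling out $\dim\mcC=2^a3^b5^c$ and reducing the $7\mid\FSexp(\mcC)$ case to dimension $28$ --- is the theorem, and it is absent. As it stands the proposal identifies the right strategy but leaves the two essential steps unproven.
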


Our approach involves a number of well-established classification techniques as
well as some new ones.  A central theme will be the application of group actions
on modular categories $\mcC$.  In particular, two groups act on equivalence
classes $\Irr\(\mcC\)$ of simple objects: the Galois group $\Gal(\mcC)$ associated
to the splitting field of the Grothendieck ring $K_0(\mcC)$ and the group
$G(\mcC)$ of invertible objects which is nontrivial for strictly weakly integral
modular categories.  We study the relationship between these
two group actions as well as their structures, which yield some useful
constraints on the dimension of the classified categories.

\section{Preliminaries}
In this section, we set the notation and recall some useful results from the literature.
Of particular importance are the groups $G(\mcC)$ of isomorphism classes of invertible objects and $\Gal(\mcC)$ the Galois group of the number field generated by the entries of the $S$-matrix.
\subsection{Modular Data}
Let $\mcC$ be a modular category (see \cite{BK} for a full list of axioms).
We denote by $\Irr\(\mcC\)$ the set of isomorphism class of simple objects of $\mcC$.  A representative of $i \in \Irr\(\mcC\)$ will be
denoted by $V_i$ or $X_i$, and the isomorphism class of the unit object $\1$ will be labelled  $0\in\Irr\(\mcC\)$. We denote by $S_{ij}$ the $(i,j)$-entry of the (unnormalized)
$S$-matrix for
$i,j \in \Irr\(\mcC\)$ and by $d_i=\dim(V_i)$ the pivotal dimension of the object $V_i$. The categorical dimension $\dim (\mcC) = \sum_i d_i^2$ is
independent of the pivotal structure of $\mcC$ and  the fusion coefficients are
given by $N_{ij}^k:=\dim\Hom(V_i\otimes V_j,V_k)$ and the twists are denoted
$\theta_i:=\theta_{V_i}$. The (unnormalized) $T$-matrix of $\mcC$ is defined by
$T_{ij} = \delta_{ij} \theta_i$ and has finite order (cf. \cite{BK}).  The order
of $T$ is equal to the Frobenius-Schur exponent of $\mcC$ (see \cite{NS} for
more details) denoted $\FSexp(\mcC)$.  The \textbf{Gauss sums}
$p_{\pm}:=\sum_{i\in\Irr\(\mcC\)} d_i^2\theta_i^{\pm 1}$ satisfy $p_+p_-=\dim\mcC$
and $\left(\frac{p_+}{p_-}\right)^{2N}=1$.  The pair $(S,T)$ is called the
\textbf{modular data} of $\mcC$, and it satisfies the following equations (see
\cite[Sect. 2]{BNRW} or \cite[3.1]{BK}).
\begin{enumerate}
 \item \textbf{Twist equation}: \begin{equation}\label{twist equation} p_+S_{ij}=\theta_i\theta_j\sum_k S_{ki}S_{kj}\theta_k. \end{equation}
 \item \textbf{Balancing equation}: \begin{equation}\label{balancing equation} S_{ij}\theta_i\theta_j=\sum_k N_{i^*j}^kd_k\theta_k.\end{equation}
 \item \textbf{Orthogonality}: \begin{equation}\label{orthogonality} SS^{\dag}=\dim(\mcC)\, I,\end{equation} where $\dag$ is the conjugate-transpose operation and $I$ denotes the identity matrix.
\end{enumerate}

The twist equation comes from the relation $(ST)^3=p_{+}S^2$.  While $(S,T)$ give rise to a projective representation of the modular group $\SL(2,\mbbZ)$, one may normalize $s=S/x$, $t=T/y$ so that $(s,t)$ gives a linear representation of $\SL(2,\mbbZ)$.  We call a pair $(s,t)$ obtained in this way a \textbf{normalized modular data} for $\mcC$.

The \textbf{Grothendieck ring} $K_0(\mcC)$ of $\mcC$ has $\mathbb{N}$-basis $\Irr(\mcC)$ with addition and multiplication induces from $\oplus$ and $\otimes$.
Defining $(N_i)_{k,j}=N_{i,j}^k$ we obtain a representation of the Grothendieck
ring via $V_i\rightarrow N_i$ by extending linearly.  The
Frobenius-Perron dimension $\FPdim(X)$ is defined as the largest eigenvalue of
$N_X$, and $\FPdim(\mcC)=\sum_{i\in\Irr\(\mcC\)}\FPdim(V_i)^2$ (see \cite{ENO1}).

\subsection{$G$-grading}

A fusion category $\mcC$ is \textbf{$G$-graded} ($G$ a finite group) if $\mcC=\bigoplus_{g\in\G} \mcC_g$ as an abelian category and $\mcC_g\otimes\mcC_h\subset\mcC_{gh}$. If each $\mcC_g$ is non-empty, the grading is called \textbf{faithful}. It was proved in \cite[Theorem 3.5]{GN} that any fusion category $\mcC$ is naturally graded by a group $U(\mcC)$, called the \textbf{universal grading group} of $\mcC$, and the adjoint subcategory $\mcC_{ad}$ (generated by all subobjects of $X^*\ot X$, for all $X$) is the trivial component of this grading. Moreover, any other faithful grading of $\mcC$ arises from a quotient of $U(\mcC)$ \cite[Corollary 3.7]{GN}. For a modular category, the universal grading group $U(\mcC)$ is isomorphic to the group $G(\mcC)$ of isomorphism classes of invertible simple objects of $\mcC$ \cite[
Theorem 6.2]{GN}. This group will play an important role in later sections. The fusion subcategory generated by the group of invertible objects is the maximal pointed subcategory of $\mcC$ and it is denoted $\mcC_{pt}$.
We will say $\mcC$ is \textbf{strictly weakly integral} if it is weakly integral and has a simple object of non-integral dimension.  A strictly weakly integral fusion category is faithfully graded by an elementary abelian $2$-group \cite[Theorem 3.10]{GN}. Indeed, as each simple object has dimension $\sqrt{k}$, for some $k\in\mbZ$, one may partition the simple objects into finitely many non-empty sets of the form $A_n:=\{X:d_X\in\sqrt{n}\mbZ\}$, where $n$ is square-free, and this partition induces a faithful grading by an elementary $2$-group. The trivial component $\mcC_e$ with respect to this grading is the subcategory $\mcC_{int}$ generated by the simple objects of $\mcC$ of integral dimension.

We should emphasize that $U(\mcC)\cong G(\mcC)$ for modular categories, and the (pointed) subcategory generated by $G(\mcC)$ is denoted $\mcC_{pt}$.

\subsection{M\"uger Center}
Two objects $X$ and $Y$ of a braided fusion category $\mcC$ (with braiding $c$) are said to \emph{centralize each other} if
$c_{Y,X} c_{X,Y} = \id_{X\otimes Y}$.  M\"uger has shown that $X$ and $Y$ centralize each other if and only if $S_{XY}=d_Xd_Y$ \cite[Prop. 2.5]{M2}.  The \textbf{centralizer} $\mcD'$ of a fusion subcategory $\mcD \subseteq \mcC$ is defined to be the full
subcategory of objects of $\mcC$ that centralize every object of $\mcD$, that is

\begin{equation*} \mcD'=\{ X\in \mcC \,\mid\, c_{Y,X} c_{X,Y} = \id_{X\otimes Y}, \text{for all }  Y\in\mcD \}.
\end{equation*}

The \textbf{M$\ddot{u}$ger center} $Z_2(\mcC)$ of a braided fusion category $\mcC$ is the centralizer of $\mcC$, that is $Z_2(\mcC) = \mcC'$, which is a symmetric fusion subcategory of $\mcC$. A braided fusion category $\mcC$ is called \textbf{symmetric} when $Z_2(\mcC) = \mcC$. A braided fusion category is \textbf{non-degenerate} if  $Z_2(\mcC) \cong \Vec$.  For premodular (spherical braided fusion) categories, non-degeneracy is equivalent to modularity, i.e. the invertibility of the $S$-matrix (\cite{Brug}).

If $\mcD\subset \mcC$ are both modular, then the centralizer $\mcD^\prime$ is also modular \cite{M2}.  In particular, one has $\mcC\cong \mcD\boxtimes\mcD^\prime$ where $\boxtimes$ denotes the \textbf{Deligne product}, see \cite{Del}.  In general $\mathcal{A}\boxtimes\mathcal{B}$ has rank $ab$ where the ranks of $\mathcal{A}$ and $\mathcal{B}$ are $a$ and $b$, respectively.
\subsection{Galois Group Action}
Let $\mcC$ be a modular category with modular data $(S, T)$. We define the
\textbf{Galois group} $\Gal(\mcC)$ of $\mcC$ as
$\Gal(\mcC):=\Gal(\mathbb{K}_{\mcC}/\mbbQ)$, where $\mathbb{K}_{\mcC} =
\mathbb{Q}(S_{ij}\mid i, j \in \Irr\(\mcC\))$. Notice that $\mathbb{K}_{\mcC}$ is the splitting field of the set of characteristic polynomials of the fusion matrices $N_i$.  The action of $\Gal(\mcC)$ on the
columns of the $S$-matrix induces a faithful action on $\Irr\(\mcC\)$ (cf.
\cite{BNRW}).  We will denote by $\hs$ the permutation on $\Irr\(\mcC\)$ associated with $\s \in \Gal(\mcC)$.

Let $\zeta_N=e^{\frac{2\pi i}{N}}$, then we have the exact sequence:
$$1\rightarrow \Gal(\mbbQ(\zeta_N)/\mathbb{K}_{\mcC})\rightarrow \Gal(\mbbQ(\zeta_N))\rightarrow \Gal(\mcC)\rightarrow 1,$$
and it is known (\cite{NS2}) that $\Gal(\mbbQ(\zeta_N)/\mathbb{K}_{\mcC})$ is an elementary $2$-group.

It follows from \cite[Lem. 4.9]{BNRW} that if $d_i = \FPdim(V_i)$, for $i \in \Irr\(\mcC\)$, then
$$
\hs(d_i) = d_{\hs(i)}\,.
$$
In particular, $\hs(0) \in G(\mcC)$.

If $(s,t)$ is a normalized modular data for $\mcC$ then Galois Symmetry (cf. \cite{DLN1}) implies: $$\s^2(t_i)=t_{\hs(i)}  \text{ for all } i.$$

\section{$\dim\mcC=4m$, $m$ odd square-free integer}
Using \cite[Prop. 8.23]{ENO1} we may and will assume that all weakly integral modular categories in the following have the canonical positive spherical structure, with respect to which categorical dimensions of simple objects coincide with their Frobenius-Perron dimensions.

\begin{theorem}\label{t:4m}
Suppose that $\mcC$ is a  modular category with $\dim\mcC=4m$, with $m$ an odd square-free integer. Then exactly one of the following is true:
\begin{enumerate}
\item[(a)] $\mcC$ contains an object of dimension $\sqrt{2}$ and $\mcC$ is equivalent to a Deligne product of an Ising modular category and a cyclic modular category or
\item[(b)] $\mcC$ is non-integral and contains no objects of dimension $\sqrt{2}$ and $\mcC$ is equivalent to a Deligne product of a metaplectic category of dimension $4k$ and a cyclic modular category of dimension $n$ where $1\leq n=\frac{m}{k}\in\mbZ$ or
\item[(c)] $\mcC$ is pointed.
\end{enumerate} All equivalences are as premodular fusion categories.
\end{theorem}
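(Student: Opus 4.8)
The plan is to first bound the possible dimensions of simple objects and then treat the integral and strictly weakly integral cases separately. Since $\dim\mcC=4m\in\mbbZ$, the category $\mcC$ is weakly integral, and after fixing the canonical spherical structure we have $d_i=\FPdim(V_i)$ with each $d_i^2$ a positive integer dividing $\dim\mcC=4m$ (the quotient $\dim\mcC/d_i^2$ being a rational algebraic integer). As $m$ is odd and square-free, this forces $d_i\in\{1,2\}$ whenever $V_i$ is integral and $d_i=\sqrt{k}$ with $k\mid 2m$ square-free and $k>1$ otherwise. I would dispose of case (c) first: suppose $\mcC$ is integral, let $a=|G(\mcC)|$, and let $b$ be the number of $2$-dimensional simples, so that $a+4b=4m$ and hence $a=4(m-b)=:4d$. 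Because $\dim\mcC_{pt}=a$ divides $\dim\mcC$, we get $d\mid m$, and from $\mcC_{ad}=(\mcC_{pt})'$ together with $\dim\mcC_{pt}\cdot\dim\mcC_{ad}=\dim\mcC$ we obtain $\dim\mcC_{ad}=m/d$, an odd square-free integer. Thus $\mcC_{ad}$ is integral with every simple dimension a square dividing a square-free number, i.e.\ $\mcC_{ad}$ is pointed; hence $\mcC_{ad}\subseteq\mcC_{pt}$ and $m/d=\dim\mcC_{ad}$ divides $\dim\mcC_{pt}=4d$. Since $m/d$ is odd and coprime to $d$, this yields $m/d=1$, forcing $d=m$, $\dim\mcC_{ad}=1$, and $\mcC$ pointed.

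For the strictly weakly integral cases I would exploit the faithful grading $\mcC=\bigoplus_{e\in E}\mcC_e$ by the elementary abelian $2$-group $E$, whose trivial component is $\mcC_{int}$. Each component has dimension $\dim\mcC/|E|$, and since $|E|$ divides both $4m$ and $|G(\mcC)|$ while $m$ is odd, one has $|E|\in\{2,4\}$. I would rule out $|E|=4$: then $\dim\mcC_{int}=m$ is odd and square-free, so by the argument above $\mcC_{int}$ is pointed and equals $\mcC_{pt}$, giving $|G(\mcC)|=m$; but $|E|=4$ must divide $|G(\mcC)|$, a contradiction. Hence $|E|=2$, so $\mcC=\mcC_0\oplus\mcC_1$ with $\dim\mcC_0=\dim\mcC_1=2m$, all non-integral simples lying in $\mcC_1$. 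If $X,Y\in\mcC_1$ have dimensions $\sqrt{k_X},\sqrt{k_Y}$ then $X\ot Y^*\in\mcC_0$ is integral, so $\sqrt{k_Xk_Y}\in\mbbZ$; since $k_X,k_Y$ are square-free, $k_X=k_Y$. Therefore there is a single square-free $k>1$ with every non-integral simple of dimension $\sqrt{k}$, and $\mcC$ contains a $\sqrt{2}$-object precisely when $k=2$.

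In case (a) ($k=2$) I would isolate an Ising subcategory. A simple object $X$ with $d_X=\sqrt{2}$ satisfies $X\ot X^*\cong\1\oplus\delta$ for an invertible $\delta$ with $\delta\ot X\cong X$, so the fusion subcategory $\mcI$ generated by $X$ has the three simples $\1,\delta,X$ and dimension $4$. Any braided fusion category with these (Ising) fusion rules is non-degenerate by the classification in \cite[Appendix B]{DGNO2}, so $\mcI$ is a modular subcategory and $\mcC\cong\mcI\boxtimes\mcI'$ with $\dim\mcI'=m$. As $m$ is odd and square-free, $\mcI'$ is integral and, every simple dimension being a square dividing $m$, pointed; a pointed modular category of square-free order $m$ has group $\mbbZ_m$ and is therefore cyclic. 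This gives the Deligne decomposition of (a).

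Case (b) ($k$ odd) is where I expect the real difficulty. Here $G(\mcC)=\mbbZ_2\times G_{odd}$ (the $2$-part is forced to be $\mbbZ_2$ because $\dim\mcC_{int}=2m\equiv2\pmod4$ makes $|G(\mcC)|\equiv2\pmod4$), and the plan is to peel off a cyclic factor and recognize the remainder as metaplectic. Concretely, I would split the metric group underlying $\mcC_{pt}$ into its radical and a non-degenerate part, realize the non-degenerate odd part as a pointed modular (hence cyclic) subcategory $\mathcal{P}$ of some odd dimension $n$, and write $\mcC\cong\mathcal{P}\boxtimes\mathcal{M}$ with $\dim\mathcal{M}=4k$ and $n=m/k$. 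It then remains to identify the core $\mathcal{M}$, which has exactly two invertible objects, two simples of dimension $\sqrt{k}$, and $(k-1)/2$ simples of dimension $2$; I would show its $\mbbZ_2$ of invertibles is Tannakian and de-equivariantize to a Tambara--Yamagami category $TY(\mbbZ_k,\chi,\nu)$, so that $\mathcal{M}\cong TY(\mbbZ_k,\chi,\nu)^{\mbbZ_2}$ is metaplectic by \cite{GNN,TY}. The main obstacle is precisely this identification: controlling the M\"uger center of $\mcC_{pt}$ so that the residual invertibles form exactly $\mbbZ_2$ (in particular showing the common square-free $k$ is odd, rather than an even non-$2$ value), and verifying that the de-equivariantization yields the Tambara--Yamagami fusion rules. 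I expect Galois-action and Gauss-sum constraints on the twists $\theta_i$, combined with the $\mbbZ_2$-grading structure, to be the tools that pin this down.
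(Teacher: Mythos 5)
Your overall architecture matches the paper's (integral $\Rightarrow$ pointed; a $\sqrt{2}$-object forces an Ising factor; otherwise peel off odd cyclic factors and identify the remainder as a $\mbZ_2$-equivariantization of a Tambara--Yamagami category), and your treatment of the integral case is a clean alternative to the paper's: where the paper passes through nilpotency and the prime-power decomposition of \cite{DGNO2}, you reach the same conclusion from $\mcC_{ad}\subseteq\mcC_{pt}$ together with the coprimality of $m/d$ and $4d$. However, there are two genuine gaps. First, your opening dimension bound is too strong: $d_i^2\mid 4m$ only gives $d_i=\alpha_i\sqrt{\ell}$ with $\alpha_i\in\{1,2\}$ and $\ell$ square-free for a non-integral simple, so the possibility $d_i=2\sqrt{\ell}$ is not excluded at the outset. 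The paper rules it out only much later, by showing that the nontrivial invertible $g$ of the core satisfies $g\otimes X_i\not\cong X_i$, whence $X_i\otimes X_i^*\cong\1\oplus\bigoplus_j m_jY_j$ with all $Y_j$ of dimension $2$, forcing $d_i^2$ odd; this simultaneously gives $\alpha_i=1$ and shows the common square-free value $k$ is odd. You cannot assume either fact for free, and your case split ``$k=2$ versus $k$ odd'' silently discards $k=2\ell$ with $\ell>1$ odd.

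Second, and more seriously, case (b) --- the substance of the theorem --- is only a plan, and the steps you defer are exactly where the work lies. Splitting off the odd pointed part requires showing that for each odd prime $p\mid n$ the order-$p$ pointed subcategory is modular rather than Tannakian (the paper gets this because a Tannakian $\Rep(\mbZ_p)$ would de-equivariantize to a category whose trivial component has dimension $\dim\mcC/p^2$, contradicting square-freeness of $m$); ``split the metric group into radical and non-degenerate part'' does not by itself show that the radical of the odd part is trivial. Identifying the core then requires: $\theta_g=1$ for the nontrivial invertible $g$ (via the balancing equation applied to $S_{g,Y}$ for a $2$-dimensional $Y$, using $Y\otimes g\cong Y$), the relation $\theta_{X_i}=-\theta_{g\otimes X_i}$ forcing $g$ to act freely on the non-integral simples, the count that there are exactly two such simples (so $\ell=k$ and the rank and dimensions are as in a metaplectic category), and the recognition of the $\mbZ_2$-de-equivariantization as $TY(\mbZ_k,\chi,\nu)$. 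None of these is routine, and you have explicitly flagged them as the main obstacle rather than resolved them. As written, the proposal establishes cases (a) and (c) but not (b).
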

\begin{proof}

If $d_i \in \mbZ$ then $d_i=1$ or $2$ since $d_i^2\mid 4m$ (by \cite[Lemma 1.2]{EG}, \cite[Proposition 8.27]{ENO1}, \cite[Proposition 2.11(i)]{ENO2}).

First, we claim that if $4 \mid \dim \mcC_{pt}$ then $\mcC$ is pointed. Under this assumption, with respect to the universal) $U(\mcC)$-grading, each component has odd dimension. In particular, $\dim \mcC_{ad}$ is odd, so $\mcC_{ad}\subset\mcC_{int}$ cannot contain simple objects of even dimension. Therefore, $\mcC_{ad}$ is pointed and hence $\mcC$ is nilpotent.  By \cite[Thm. 1.1]{DGNO2}, $\mcC$ has a unique decomposition into a tensor product of braided fusion categories whose Frobenius-Perron dimensions are the distinct prime power factors of $\dim \mcC=4m$. Since braided fusion categories with prime Frobenius-Perron dimensions are pointed and $4 \mid |G(\mcC)|$, all the factors in this tensor decomposition of $\mcC$ are pointed. Thus, $\mcC$ is pointed.

If $\mcC$ is integral, then $4m = \dim \mcC_{pt} + 4 s$, where $s = |\{i \in
\Irr\(\mcC\)\mid d_i =2\}|$.  Therefore, $4 \mid \dim \mcC_{pt}$ and hence, by the previous paragraph, $\mcC$ is pointed as described in (c).

We may therefore assume that $4 \nmid \dim \mcC_{pt}$ and $d_i \not\in \mbZ$ for
some $i \in \Irr\(\mcC\)$. Then $|U(\mcC)|= \dim \mcC_{pt} = 2^an$, where $a= 0,1$ and $n\mid m$ is odd and square-free. By \cite[Theorem 3.10]{GN} $a\neq 0$, so we must have $a=1$. 

We claim that $\mcC\cong \mcD\boxtimes \mcP$, where $\mcD$ is modular with $\dim\mcD=4k$ and $\dim\mcD_{pt}=2$ and $\mcP$ is a cyclic modular category of dimension $n=m/k$. We prove this by induction on the number of (distinct) primes factors of
$n$.  In the base case $n=1$ ($0$ prime factors) the statement is clear, since $\mcC\cong\mcC\boxtimes\Vec$. Suppose $p$ is a prime factor of $n$. Let $\mcC_{pt}(p)$ be a pointed subcategory of dimension $p$. Observe that $\mcC_{pt}(p)$ is either symmetric or modular. If $\mcC_{pt}(p)\cong\Rep(\mbZ_p)$ is symmetric then it is also Tannakian since $p$ is odd (cf. \cite [Corollary 2.50 (i)]{DGNO1}). The corresponding $\mbZ_p$-de-equivariantization of $\mcC$ is $\mbZ_p$-graded with trivial component $(\mcC_{\mbZ_p})_0$ of dimension $\dim\mcC/p^2$, contradicting $n$ square-free. Therefore $\mcC_{pt}(p)$ is modular and hence $\mcC\cong \mcC_{pt}(p)\boxtimes\mcD$ where $\mcD$ is the modular subcategory which centralizes $\mcC_{pt}(p)$ by  \cite[Theorem 4.2]{M2} (see also \cite[Theorem 3.13]{DGNO1}).  Now 
$|U(\mcD)|=2n/p$ so the claim follows.

We now proceed to classify modular categories $\mcD$ with $\dim\mcD=4k$ and $\dim\mcD_{pt}=2$. If $k=1$, then $\dim \mcD=4$ and $\mcD$ is an Ising modular category by \cite[Appendix B]{DGNO1}. In this case, $\mcC$ is as in (a).

Now, we may  assume $k > 1$. Since $U(\mcD)\cong\mbZ_2 $, $\mcD$ has a $\mbZ_2$-grading $\mcD = \mcD_0 \oplus \mcD_1$ where $\mcD_0 = \mcD_{ad}$ is the trivial component.  For any simple object $X_i$ with $d_i \not\in \mbZ$, $X_i \in \mcD_1$ and $d_i=\alpha_i\sqrt{\ell}$, with $\ell$ square-free and $\alpha_i\in\{1,2\}$, since $\mcD$ is modular and $k$ is square free. The trivial component $\mcD_0$ contains all the simple objects of integral dimension, which consist of two isomorphism classes of dimension $1$ and $(k-1)/2$ classes of dimension $2$.

Let $\s \in \Gal(\mcD)$ which assigns $\sqrt{\ell} \mapsto -\sqrt{\ell}$ and
$\hs(0)=1$. Then the second row of the $S$-matrix of $\mcD$ is given by the
action $\s$ on the first row of $S$. Therefore, the second row of $S$
(corresponding to label $1\in\Irr\(\mcC\)$) is:
 $$(1,1,2,\ldots,2,-\alpha_
1\sqrt{\ell},\ldots,-\alpha_t\sqrt{\ell}).$$
In particular, $1\in\Irr\(\mcC\)$ must be the isomorphism class of a non-trivial invertible object $g$.  We claim that $\theta_g=1$. To see this, first observe that if $Y$ is simple and $\dim(Y)=2$ then $Y\otimes Y^*=\one\oplus g\oplus Y^\prime$, where $Y^\prime$ is a $2$-dimensional simple object. Thus, $N_{Y,Y^*}^g=N_{Y,g}^Y=1$ so that $Y\otimes g\cong Y$, for all $2$-dimensional simple object $Y$.
The balancing equation \eqref{balancing equation} gives:
$$2=S_{g,Y}=\theta_g^{-1}\theta_{Y}^{-1}N_{g^*,Y}^Yd_Y\theta_Y=2\theta_g\inv,$$ so that $\theta_g=1$, as claimed.

For $i \in \Irr\(\mcD_{1}\)$, $d_i = \a_i \sqrt{\ell}$. We use the balancing equation \eqref{balancing equation} again:
$$-\alpha_i\sqrt{\ell}=S_{g,X_i}=\theta_g\inv \theta_{X_i}^{-1}\theta_{g\otimes
X_i}d_{g\ot X_i}=\alpha_i\sqrt{\ell}\theta_{X_i}^{-1}\theta_{g\ot X_i},$$ which
implies that $\theta_{X_i}=-\theta_{g\otimes X_i}$. Consequently, $g\otimes
X_i\not\cong X_i$ and $g\otimes X_i\not\cong X_i^*$ so that $N_{X_i,X_i}^g=N_{X_i,X_i^*}^g=0$. This implies $X_i\ot X_i^*\cong \one\oplus\sum_j m_jY_j$, where $Y_j$ are simple
objects of dimension $2$. In particular, $d_i^2=\alpha_i^2\ell$ is odd.  Thus
$\ell$ is odd and  we conclude $\alpha_i=1$, for all $i \in \Irr\(\mcD_{1}\)$. Notice that,  since $d_i^2 = \ell$ is odd and $N_{X_i,X_i}^g=0$, we must have $X_i\ot X_i\cong \one\oplus\sum_j n_jY_j$ with $\FPdim(Y_j)=2$ so that $X_i$ is self-dual.

We claim that $\abs{\Irr\(\mcD_{1}\)}=2$. Since $g$ does not fix any $X_i$,
$\abs{\Irr\(\mcD_{1}\)}> 1$.  Let $j \in \Irr\(\mcD_{1}\)$. Since $\dim(X_i\otimes X_j)=\ell$ is odd, exactly one of $\one$ or $g$  is a subobject of $X_i\ot X_j$. Since all $X_i$ are self-dual, $N_{X_i,X_j}^\one=1$ implies $X_i\cong X_j$ whereas $N_{X_i,X_j}^g=1$ implies $X_j\cong g\otimes X_i$. Therefore, $X_i$ and $g\otimes X_i\neq X_i$
represent the only two non-isomorphic simple objects in $\mcD_1$. Thus, we
conclude that $\ell=k$, and $\abs{\Irr\(\mcD\)}=(k+7)/2$. Let $\{Y_2,\dots, Y_{(k+1)/2}\}$ be a complete set of representatives for the simple objects of dimension 2, and $\{U,V\}\subset\mcD_1$ for dimension $\sqrt{k}$. Since $\theta_g = 1$, the subcategory generated by $g$ is Tannakian.  This induces an action of $G(\mcD)$ on $\mcD$ by interchanging $\one$ and $g$, fixing all $Y_i$ and interchanging $U$ and $V$. Then the $\mbZ_2$-de-equivariantization $\mcD_{\mbZ_2}$ has $k$ objects of dimension $1$ (one from the $G(\mcC)$-orbit $\{\one, g\}$ and two from each $Y_i$) and one object of dimension $\sqrt{k}$ (from the orbit $\{U, V\}$) (cf. \cite[5.1]{M1}). As a fusion category $\mcD_{\mbZ_2}$ must be $TY(\mbZ_k,\chi,\nu)$, for which there are $4$ possible choices of $\chi$ and $\nu$. Since equivariantization and de-equivariantization are inverse operations, we have $\mcD\cong TY(\mbZ_k,\chi,\nu)^{\mbZ_2}$. As these categories are described in (b), the
proof is complete.
\end{proof}

\begin{remark}
 An alternative construction of the categories obtained in Theorem \ref{t:4m} (b) is as follows. The non-pointed factor $\mcD:=TY(\mbZ_k,\chi,\nu)^{\mbZ_2}$ can be recovered from the formula $$\mcD\boxtimes \mcP\cong\mcZ(TY(\mbZ_k,\chi,\nu)),$$ obtained from \cite[Cor. 3.30]{DMNO}. Here $\mcP$ is the maximal pointed subcategory of the Drinfeld center  $\mcZ (TY(\mbZ_k,\chi,\nu))$, which is modular by \cite[Prop. 4.2]{GNN}. There are at most $8$ possible inequivalent categories of this form up to balanced braided tensor equivalences: $4$ for the choices of $\chi$ and $\nu$ (two each) and two choices of a spherical structure.
\end{remark}

The following result will be useful later.
\begin{lemma}\label{1-non-integral-object-ising} If $\mcC$ is a weakly integral modular category
of rank $r\geq 3$ in which there is a unique simple isomorphism class of objects $X$ such that
$\FPdim X \notin \mbbZ$ then $\mcC$ is equivalent to an Ising
modular category.
\end{lemma}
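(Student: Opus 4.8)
The plan is to show that the hypotheses force $\mcC$ to have rank exactly $3$ and Frobenius--Perron dimension $4$, whence it is an Ising modular category by the classification in \cite[Appendix B]{DGNO2}. Write $d_X=\sqrt n$, where $n:=\FPdim(X)^2$ is an integer (by weak integrality) that is not a perfect square, so $n\ge 2$. Since $X$ is the \emph{unique} simple object of non-integral dimension, $X^*$ is again a non-integral simple and hence $X^*\cong X$. The integral objects form a fusion subcategory $\mcC_{int}$, and every simple object other than $X$ lies in $\mcC_{int}$; thus $\Irr(\mcC)=\Irr(\mcC_{int})\sqcup\{X\}$.

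First I would determine the column of the $S$-matrix indexed by $X$. For any simple $Y\in\Irr(\mcC_{int})$, the product $Y^*\otimes X$ has the irrational dimension $d_Yd_X\in\sqrt n\,\mbZ$, and its simple constituents are integral except possibly for $X$; by $\mbQ$-linear independence of $1$ and $\sqrt n$ the integral constituents must cancel, forcing $Y^*\otimes X\cong d_Y\,X$ and $N_{Y^*,X}^X=d_Y$. Substituting into the balancing equation \eqref{balancing equation} gives
$$S_{Y,X}\,\theta_Y\theta_X=N_{Y^*,X}^X\,d_X\,\theta_X=d_Yd_X\,\theta_X,$$
so that $S_{Y,X}=d_Yd_X\,\theta_Y^{-1}$ and in particular $|S_{Y,X}|=d_Yd_X$ for every $Y\in\Irr(\mcC_{int})$.

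The crux is then a single orthogonality computation. The diagonal entry of \eqref{orthogonality} at the label $X$ reads $\sum_{i\in\Irr(\mcC)}|S_{i,X}|^2=\dim\mcC$. Splitting the sum over $\Irr(\mcC_{int})$ and the remaining label $X$, and using $|S_{Y,X}|^2=d_Y^2d_X^2$ together with $\dim\mcC_{int}=\sum_{Y\in\Irr(\mcC_{int})}d_Y^2$ and $\dim\mcC=\dim\mcC_{int}+d_X^2$, yields
$$d_X^2\,\dim\mcC_{int}+|S_{X,X}|^2=\dim\mcC_{int}+d_X^2.$$
Since $|S_{X,X}|^2\ge 0$ and $d_X^2=n\ge 2$, this forces $\dim\mcC_{int}\le n/(n-1)\le 2$. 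On the other hand $\mcC_{int}$ contains $\1$ and, because $r\ge 3$, at least one further simple object, so $\dim\mcC_{int}\ge 2$; hence $\dim\mcC_{int}=2$, and the inequality $2\le n/(n-1)$ collapses to $n=2$. Therefore $\mcC_{int}\cong\Rep(\mbZ_2)$ is pointed of rank $2$, $X$ has dimension $\sqrt 2$, and $\mcC$ is a modular category of rank $3$ and dimension $4$, i.e.\ an Ising modular category.

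I expect the main obstacle to be the dimension bookkeeping rather than the orthogonality step itself: specifically, justifying cleanly that the only simple constituent of $Y^*\otimes X$ is $X$, and that the bound $|S_{X,X}|^2\ge 0$ genuinely pins down both $\dim\mcC_{int}$ and $n$. A pleasant feature of this route is that the inequality $\dim\mcC_{int}\le 2$ automatically excludes any simple object of $\mcC_{int}$ of dimension $\ge 2$, so no separate analysis of the fusion rules of the integral part is required.
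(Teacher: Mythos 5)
Your proof is correct, and it takes a genuinely different route from the paper's. The paper first notes that the universal grading forces $U(\mcC)\cong\mbZ_2$, then uses the Galois automorphism $\sqrt{m}\mapsto-\sqrt{m}$ (which swaps the columns of $\one$ and the nontrivial invertible $g$) to write the $S$-matrix explicitly, with $S_{Y,X}=0$ for every integral $Y\neq\one,g$ and $S_{X,X}=0$; it then applies the twist equation twice, first to get $\theta_g=-1$ and then to get $|p_+|=2$, hence $\dim\mcC=4$. You instead extract the fusion rule $Y^*\otimes X\cong X^{\oplus d_Y}$ from the $\mbQ$-linear independence of $1$ and $\sqrt{n}$, feed it into the balancing equation to get $|S_{Y,X}|=d_Yd_X$ for \emph{every} integral $Y$, and let orthogonality of the $X$-column finish the job; all steps check out (in particular $\dim\mcC_{int}\ge 2$ from $r\ge 3$, and the positive spherical structure adopted at the start of Section 3 lets you use $d_Y=\FPdim(Y)$ in the balancing equation). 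Your argument is more elementary and self-contained: it needs neither the universal grading group nor the Galois action, only fusion rules, balancing, and unitarity of $s$. What the paper's route buys is the extra intermediate data ($\theta_g=-1$, the full shape of the $S$-matrix) in the style reused elsewhere in the paper; what yours buys is brevity and the observation (implicit in your computation, since $X\otimes X^*$ must decompose as $\bigoplus_Y d_Y\,Y$) that $n=\dim\mcC_{int}$, which gives yet another way to close the argument. The apparent tension between your $|S_{Y,X}|=d_Yd_X$ and the paper's $S_{Y,X}=0$ for $Y\neq\one,g$ is of course resolved by the fact that both proofs show no such $Y$ exists.
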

\begin{proof} 
Under the hypotheses of the statement, the universal grading group $U(\mcC)\cong\mbbZ_2$ since in any faithful grading the component containing $X$ has dimension $\FPdim(X)$.  Therefore there is a (unique up to isomorphism) invertible object $g$.
Both $X$ and the non-trivial invertible object $g$ are self-dual.
The first column of the $S$-matrix is given by the Frobenius-Perron dimensions
of the simple objects of $\mcC$.  Let the index $1\in\Irr\(\mcC\)$ correspond to the non-trivial invertible object $g$.
By \cite[Lemma 4.9]{BNRW}, the Galois automorphism $\s$ that sends $\sqrt{m}$ to $-\sqrt{m}$
interchanges the first two columns of the $S$-matrix.
We see that the $S$-matrix of
$\mcC$ is:
$$\left[\begin{array}{ccccccc} 1 & 1 & d_1 & \cdots & d_t &
\sqrt{m}\\
1 & 1 & d_1 & \cdots & d_t & -\sqrt{m}\\
d_1 & d_1 &   & & & 0 \\
\vdots & \vdots & &\ddots & & \vdots \\
d_t  & d_t &   &  & & 0 \\
\sqrt{m} & -\sqrt{m} & 0 & \cdots & 0 & 0
\end{array}\right]$$
where $1<d_i\in\mbbZ$.
Applying the twist equation \eqref{twist equation} to the entry
$S_{X,X} = 0$, we get that $0= \theta_X^2
\sum_i \theta_i S_{i, X}^2=(m \theta_\1 + m \theta_g)\theta_X^2$. We conclude that $\theta_g= - \theta_\1 =
-1$. Next, we apply the twist equation \eqref{twist equation} to the entry $S_{g,X}=-\sqrt{m}$ obtaining:
$$p_+ (-\sqrt{m}) =\theta_g\theta_X\sum_i\theta_i S_{i,g}S_{i,X}=-\theta_X 2\sqrt{m}.$$
It follows that $|p_+| = 2 |\theta_X| = 2$ and $\dim \mcC = 4$. Therefore $\mcC$
is equivalent to an Ising modular category.
\end{proof}

\begin{lemma} Let $\mcC$ be a weakly integral modular category of square-free Frobenius-Perron
dimension.  Then $\mcC$ is pointed.
\end{lemma}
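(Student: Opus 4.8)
The plan is to deduce pointedness from two observations: that square-freeness of the dimension forces every integral simple object to be invertible, and that the existence of a non-integral object would force a power of $2$ into $\FPdim(\mcC)$ too large to be compatible with square-freeness.

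Set $N=\FPdim(\mcC)$. First I would record that for any simple $V_i$ with $d_i=\FPdim(V_i)\in\mbZ$ one has $d_i^2\mid N$ (by the same divisibility results cited at the start of the proof of Theorem~\ref{t:4m}: \cite[Lemma 1.2]{EG}, \cite[Proposition 8.27]{ENO1}, \cite[Proposition 2.11(i)]{ENO2}). Since $N$ is square-free, this forces $d_i=1$. Thus every integral simple object of $\mcC$ is invertible, so $\mcC_{int}=\mcC_{pt}$ and, using $U(\mcC)\cong G(\mcC)$ \cite[Theorem 6.2]{GN}, $\FPdim(\mcC_{int})=|G(\mcC)|=|U(\mcC)|$. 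It now suffices to show that $\mcC$ is integral, for then all simple objects are invertible and $\mcC$ is pointed.

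To rule out non-integral objects I would argue by contradiction: suppose $\mcC$ is strictly weakly integral. Then \cite[Theorem 3.10]{GN} provides a faithful grading of $\mcC$ by an elementary abelian $2$-group $E\cong\mbZ_2^k$, which is non-trivial (so $k\geq 1$) precisely because a non-integral object lies outside the trivial component $\mcC_{int}$. Since the grading is faithful, all of its homogeneous components have the same Frobenius-Perron dimension, whence $N=|E|\,\FPdim(\mcC_{int})=2^k|G(\mcC)|$. On the other hand, any faithful grading arises from a quotient of the universal grading group $U(\mcC)\cong G(\mcC)$ \cite[Corollary 3.7]{GN}, so $E\cong\mbZ_2^k$ is a quotient of $G(\mcC)$ and in particular $2^k\mid|G(\mcC)|$. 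Combining the two relations yields $2^{2k}\mid N$ with $2k\geq 2$, contradicting the square-freeness of $N$. Hence $\mcC$ has no non-integral simple object and is therefore pointed.

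The computation is short, and I expect the only delicate point to be the coupling of the two gradings in the last step: one must invoke that the elementary abelian $2$-group grading of a strictly weakly integral category is itself a quotient of the universal grading $U(\mcC)\cong G(\mcC)$, which is exactly what upgrades the evident factor $2^k=|E|$ into a second factor $2^k$ dividing $|G(\mcC)|$, thereby producing the forbidden square divisor of $N$. Everything else follows directly from square-freeness and the cited structural results.
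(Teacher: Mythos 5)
Your proof is correct, but it takes a genuinely different route from the paper's. Both arguments open the same way: square-freeness of $\FPdim(\mcC)$ together with $d_i^2\mid\FPdim(\mcC)$ forces every integral simple object to be invertible. From there the paper observes that $\mcC_{ad}\subseteq\mcC_{int}$ is therefore pointed, so $\mcC$ is nilpotent, and then invokes the decomposition of a nilpotent braided fusion category into a Deligne product of factors of prime power (here prime, by square-freeness) dimension, each of which is pointed by \cite[Corollary 8.30]{ENO1}. You instead rule out the strictly weakly integral case directly by playing the two gradings against each other: the faithful elementary abelian $2$-group grading of \cite[Theorem 3.10]{GN} gives $\FPdim(\mcC)=2^k\,|G(\mcC)|$ with $k\geq 1$ (since $\mcC_{int}=\mcC_{pt}$ and all components of a faithful grading have equal dimension), while \cite[Corollary 3.7]{GN} and $U(\mcC)\cong G(\mcC)$ force $2^k\mid |G(\mcC)|$, so $2^{2k}$ divides the square-free integer $\FPdim(\mcC)$, a contradiction; integrality then gives pointedness immediately. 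Your argument is more self-contained, using only the grading machinery already set up in the paper and avoiding the nilpotency and prime-decomposition theorems; the paper's is shorter given that it is willing to cite that heavier structural result. The one step worth stating explicitly in your write-up is that the trivial component of the $2$-group grading is $\mcC_{int}$ (which the paper records when introducing that grading), since that is what identifies $\FPdim(\mcC)/2^k$ with $|G(\mcC)|$.
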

\begin{proof}
The only possible integral $\operatorname{FP}$-dimension of a simple object is $1$, since $\FPdim(\mcC)$ is square-free.
Therefore, the integral subcategory $\mcC_{ad}$ is pointed, hence $\mcC$ is nilpotent.
Thus, by \cite[Theorem 1.1]{DGNO1},
$\mcC$ is a Deligne product of braided subcategories of prime dimension. Such categories are pointed, by \cite[Corollary 8.30]{ENO1}.
\end{proof}

\section{General Technical Results}
The results of the first two subsections apply to arbitrary modular categories.
In particular, we do not assume that $d_i=\FPdim(V_i)$, since this cannot always
be achieved (even by changing the spherical structure).  We introduce an action
of $G(\mcC)$ on $\Irr\(\mcC\)$ and study its restriction to symmetric pointed
subcategories.  Then we explore the cycle structure of the permutation action of
$\Gal(\mcC)$ on $\Irr\(\mcC\)$, particularly for elements coming from automorphisms of $\mbbQ_{p^a}$ for primes $p$ dividing $N=\FSexp(\mcC)$.

The third subsection studies support cycles for weakly integral modular categories specifically, to be applied in our classification.

\subsection{$G(\mcC)$-grading of a modular category}\label{s:grading}
Let $G=G(\mcC)$ be the group of isomorphism classes of invertible objects of a
modular category $\mcC$.  The abelian group $G$ acts on $\Irr\(\mcC\)$ as follows:
Let $i\in G$ and $j\in\Irr\(\mcC\)$ and $V_i$, $V_j$ be representative simple objects in these classes.  Since $V_i$ is an invertible object, $V_i^*\ot V_j\cong V_k$ is simple and we define $i\cdot j=k$.

Let $\hat{G}$ denote the character group of $G$. For each $j\in\Irr\(\mcC\)$,
the map $\phi_j:i\mapsto S_{ij}/d_j$ defines a character of the Grothendieck
ring $K_0(\mcC)$. Notice that modularity implies that the restrictions of the
$\phi_j$ for all $j\in\Irr\(\mcC\)$ span $\hat{G}$. Since for $i\in G$,
$d_i=1/d_i=\pm 1$, the set of characters $\phi_0\phi_j:i\mapsto
\frac{S_{ij}}{d_id_j}$ for all $j\in\Irr\(\mcC\)$ also spans $\hat{G}$.
Thus, the modular category $\mcC$ admits a faithful $\hat{G}$-grading (cf. \cite[Theorem 6.2]{GN}) which is given by $\mcC = \bigoplus_{\chi\in\hat{G}}\mcC_\chi$, where the set of simple objects in $\mcC_\chi$ are:
$$
\Irr\(\mcC_{\chi}\) = \left\{j\in \Irr\(\mcC\) \mid \frac{S_{ij}}{d_i d_j} = \chi(i), \text{ for all } i \in G \right\}.
$$

This natural $\hat{G}$-grading on $\mcC$ induces a canonical $\hat{H}$-grading on $\mcC$, for any subgroup $H$ of $G$. More precisely, for $\chi \in \hat{H}$,
the set of simple objects in each component of the grading is:
$$
\Irr\(\mcC_{\chi}\)  = \left\{j\in \Irr\(\mcC\) \mid \frac{S_{ij}}{d_i d_j} = \chi(i), \text{ for all } i \in H \right\}.
$$
Since the restriction map $\hat{G} \to \hat{H}$ is surjective, this $\hat{H}$-grading is also faithful and $\dim \mcC_\chi = \frac{\dim \mcC}{|H|}$, for all $\chi\in\hat{H}$. See \cite[Proposition 8.20]{ENO1}.

The subcategory generated by a subgroup $H\subset G$ will be denoted $\mcC(H)$. For example $\mcC(G)=\mcC_{pt}$.  We are particular interested in the subgroups $H$ of $G$ which generates a symmetric full subcategory of $\mcC$. In this case, we will simply call $H$ a \textbf{self-centralizing} subgroup of $G$.

\begin{remark}\label{H in C_e}
If $H$ is a self-centralizing subgroup of $G$ then $S_{g,h} = d_g d_h$, for all $g,h \in H$. Therefore, $H \subset \mcC_e$, where $\mcC_e$ is the trivial component of the $\hat{H}$-grading of $\mcC$ associated to the trivial character.  Moreover if $g,h\in H$ we have
$$(d_h\theta_h)(d_g\theta_g)=(d_{h^*}\theta_{h^*})(d_g\theta_g)=d_{hg}\theta_{hg}$$ by (\ref{balancing equation}) since $S_{g^*,h}=d_{g^*}d_h$.  Therefore $\chi_H(h)=d_h\theta_h$ defines a character of $H$.
If $\mcC(H)$ is a Tannakian subcategory of $\mcC$ then $H$ is a self-centralizing subgroup and $d_h  = \theta_h = \pm 1$, for all $h \in H$, so $\chi_H$ is trivial. We will simply call such subgroups \textbf{Tannakian}. Notice that if $H$ is a self-centralizing group of odd prime order $p$ then $H$ is Tannakian by \cite[Corollary 2.50]{DGNO1}.

On the other hand, if the self-centralizing subgroup $H$ is not Tannakian, then
there exists $h \in H$ such that $d_h \theta_h = -1$ so that $\chi_H$ is a
non-trivial character. This condition provides a faithful $\mbZ_2$-grading on
the fusion subcategory $\mcC(H)$ generated by $H$. The set of isomorphism
classes $\Irr\(\mcC(H)_{0}\)$ of simple objects of the trivial component $\mcC(H)_0$ is an index 2 Tannakian subgroup. In particular, $H$ must be of even order.
\end{remark}

\begin{lemma} \label{l:grading1}
Let $H$ be a self-centralizing subgroup of $G$ and $\chi \in \hat{H}$. Then $H
\cdot j \subseteq \Irr\(\mcC_{\chi}\)$, for all $j\in\Irr\(\mcC_{\chi}\)$.
\begin{enumerate}
  \item  If $H$ is Tannakian  and $\chi$ is not trivial then $H \cdot j$ is not
    a singleton for any $j\in\Irr\(\mcC_{\chi}\)$. In particular, when $H$ is
    Tannakian of prime order and $\chi$ is not trivial, $|H|$ divides
    $\abs{\Irr\(\mcC_{\chi}\)}$.
  \item  Suppose $H$ is not Tannakian and $\chi \in \hat H$. If some $j \in
    \Irr\(\mcC_{\chi}\)$  satisfies $H \cdot j=\{j\}$ then $\chi=\chi_H$.
\end{enumerate}
\end{lemma}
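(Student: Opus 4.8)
The plan is to establish the containment $H\cdot j \subseteq \Irr(\mcC_\chi)$ first, and then to extract both numbered claims from a single evaluation of $\chi$ on the stabilizer of $j$, using the balancing equation \eqref{balancing equation}. For the containment I would argue purely at the level of the grading. By Remark \ref{H in C_e} we have $H \subset \mcC_e$, the trivial component of the $\hat H$-grading, so in particular $V_h^* = V_{h^*} \in \mcC_e$ for every $h \in H$ (as $h^* \in H$). Since the grading is multiplicative, $\mcC_e \otimes \mcC_\chi \subseteq \mcC_\chi$; and because $h\cdot j$ is by definition the class of $V_h^* \otimes V_j$, we obtain $h\cdot j \in \Irr(\mcC_\chi)$ whenever $j \in \Irr(\mcC_\chi)$. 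Thus $H$ acts on the finite set $\Irr(\mcC_\chi)$, which is what makes the orbit-counting in (i) meaningful.

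The core of the proof is a computation on the stabilizer. Fix $i \in H$ with $i\cdot j = j$, i.e. $V_i^* \otimes V_j \cong V_j$. Since $V_i$ is invertible, $V_i^* \otimes V_j$ is simple, so the only nonzero fusion coefficient $N_{i^*j}^k$ is the one with $k = i\cdot j = j$, equal to $1$. The balancing equation \eqref{balancing equation} then collapses to $S_{ij}\theta_i\theta_j = d_j\theta_j$, giving $S_{ij} = d_j\theta_i^{-1}$ and hence
\[
\chi(i) = \frac{S_{ij}}{d_i d_j} = \frac{\theta_i^{-1}}{d_i} = (d_i\theta_i)^{-1},
\]
where I use $d_i = d_i^{-1} = \pm 1$ for invertible $i$. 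Because $\mcC(H)$ is symmetric, $d_i\theta_i = \chi_H(i) \in \{\pm 1\}$ (this is the $\mbZ_2$-valuedness of $\chi_H$ recorded in Remark \ref{H in C_e}), so $(d_i\theta_i)^{-1} = d_i\theta_i = \chi_H(i)$. The upshot is the clean identity $\chi(i) = \chi_H(i)$ for every $i$ in the stabilizer of $j$.

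Both claims now drop out. For (ii), the hypothesis $H\cdot j = \{j\}$ means every $i \in H$ fixes $j$, so the identity holds for all $i \in H$ and gives $\chi = \chi_H$ directly. For (i), $H$ Tannakian forces $\chi_H$ to be trivial; were $H\cdot j$ a singleton for some $j \in \Irr(\mcC_\chi)$, the same identity would yield $\chi = \chi_H = \mathbf 1$, contradicting that $\chi$ is nontrivial. Hence no orbit is a singleton. When in addition $|H| = p$ is prime, every $H$-orbit in $\Irr(\mcC_\chi)$ has cardinality dividing $p$; since none has size $1$, all have size $p$, and therefore $p \mid \abs{\Irr(\mcC_\chi)}$.

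The one genuinely delicate point—and the place I would be most careful—is the step $(d_i\theta_i)^{-1} = d_i\theta_i$, which rests on $d_i\theta_i = \pm 1$ for invertible objects of the symmetric category $\mcC(H)$; I would justify this either by invoking the super-Tannakian structure of symmetric fusion categories (so that $\theta_i = \pm 1$) or, more economically, by citing the $\mbZ_2$-valuedness of $\chi_H$ already established in Remark \ref{H in C_e}. A secondary bookkeeping issue is keeping the action convention $i\cdot j \leftrightarrow V_i^*\otimes V_j$ consistent, so that the surviving fusion coefficient in \eqref{balancing equation} is correctly identified as $N_{i^*j}^{\,i\cdot j}=1$; beyond this, every step is a direct substitution.
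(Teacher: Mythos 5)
Your proof is correct and follows essentially the same route as the paper's: the containment comes from $H\subseteq\mcC_e$ together with multiplicativity of the grading, and the balancing equation applied to a fixed point of the $H$-action yields $\chi(h)=(d_h\theta_h)^{-1}=\chi_H(h)$, from which both numbered claims follow. The only (cosmetic) difference is that you derive this identity once on the stabilizer and read off (i) and (ii) from it, whereas the paper runs the same computation separately in the Tannakian case (where $d_h\theta_h=1$ gives $\chi$ trivial directly) and in the non-Tannakian case.
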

\begin{proof} Since $H \subseteq \mcC_e$ and the action of $H$ is induced by the tensor product, it follows immediately from the definition of grading that $H\cdot \Irr(\mcC_\chi) \subseteq \Irr(\mcC_\chi)$.

Now, we assume that $H$ generates a Tannakian subcategory of $\mcC$. Thus $d_h
\theta_h = 1$, {for all} $h\in H$. Suppose that $j \in \Irr\(\mcC_{\chi}\)$ is fixed by $H$. 
Then $N_{h,j}^k = \delta_{k,j}$, for all $h\in H$,  $k\in \Irr\(\mcC\)$. Using the balancing equation \eqref{balancing equation}, we get that
$$\chi(h)=\frac{S_{h,j}}{d_h d_j}=  \frac{\sum_k N_{h^*,j}^k  d_k \theta_k}{\theta_h \theta_j d_h d_j} = 1 \,,  $$
{for all} $h \in H$. Therefore, $\chi$ is trivial. In addition, $|H| = p$
implies that each $H$-orbit in $\Irr\(\mcC_{\chi}\)$ has exactly $p$ classes. Therefore, the {second} statement of (i) follows.

Suppose $H$ is not Tannakian, and $j \in \Irr\(\mcC_{\chi}\)$ for some $\chi  \in \hat H$ satisfies $H \cdot j = \{j\}$. Then, for $h \in H$, the balancing equation \eqref{balancing equation} implies
$$
\chi(h)   =\frac{ S_{hj} \theta_h \theta_j }{d_h d_j \theta_h \theta_j} = \frac{d_{h\cdot j}\theta_{h\cdot j}}{d_hd_j\theta_h\theta_j}=\frac{ \theta_j d_j }{d_h d_j \theta_h \theta_j}= \frac{1}{ d_h \theta_h}  = \chi_H(h)\, ,
$$ since $\theta_h=\pm 1$ for $h\in H$.
\end{proof}
\begin{lemma}\label{l:grading3}
Suppose $H$ is a self-centralizing subgroup of $G$. If, for some $\psi \in
\hat{H}$, $\Irr\(\mcC_{\psi}\)$ consists of only one $H$-orbit then $\mcC_e$ is
integral. If, in addition, $k \in \Irr\(\mcC_{e}\)$ is fixed by $H$ then $[H:H_0]
\mid d_k$, where $H_0$ is the stabilizer of any element of $\Irr\(\mcC_{\psi}\)$.
\end{lemma}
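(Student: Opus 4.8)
The plan is to read everything off the faithful $\hat H$-grading $\mcC=\bigoplus_{\chi\in\hat H}\mcC_\chi$, using that the $H$-action permutes simple objects inside a fixed component without changing their Frobenius--Perron dimension. Fix $j\in\Irr(\mcC_\psi)$ and set $d=\FPdim(V_j)$. Since invertible objects have Frobenius--Perron dimension $1$ and $h\cdot j$ is the class of $V_h^*\ot V_j$, every object of the orbit $H\cdot j$ has Frobenius--Perron dimension $d$; by hypothesis this orbit is all of $\Irr(\mcC_\psi)$, so every simple object of $\mcC_\psi$ has dimension $d$. For the first assertion I would take an arbitrary $X\in\Irr(\mcC_e)$ and use that the trivial component multiplies $\mcC_\psi$ into itself, $\mcC_e\ot\mcC_\psi\subseteq\mcC_{e\cdot\psi}=\mcC_\psi$. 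Thus $V_X\ot V_j$ is a sum of simple objects of $\mcC_\psi$, each of dimension $d$, and comparing Frobenius--Perron dimensions gives
\[
\FPdim(V_X)\,d=\FPdim(V_X\ot V_j)=\Big(\sum_{l\in\Irr(\mcC_\psi)}N_{X,j}^{\,l}\Big)\,d,
\]
so $\FPdim(V_X)=\sum_l N_{X,j}^{\,l}\in\mbZ$. As $X$ is arbitrary, $\mcC_e$ is integral.

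For the second assertion, assume in addition that $k\in\Irr(\mcC_e)$ is $H$-fixed, i.e.\ $V_h^*\ot V_k\cong V_k$ for all $h\in H$. The key step I would carry out is to promote this into an $H$-symmetry of the fusion coefficients: tensoring $V_k\ot V_j\cong\bigoplus_l N_{k,j}^{\,l}V_l$ on the left by $V_h^*$ and using $V_h^*\ot V_k\cong V_k$ yields $\bigoplus_l N_{k,j}^{\,l}V_{h\cdot l}\cong V_k\ot V_j$, whence $N_{k,j}^{\,h\cdot l}=N_{k,j}^{\,l}$ for all $h\in H$ and all $l$. Since $\Irr(\mcC_\psi)$ is a single $H$-orbit, this forces $l\mapsto N_{k,j}^{\,l}$ to be constant on $\Irr(\mcC_\psi)$, say equal to $c$. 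Counting Frobenius--Perron dimensions as before,
\[
d_k\,d=\FPdim(V_k\ot V_j)=c\,|\Irr(\mcC_\psi)|\,d=c\,[H:H_0]\,d,
\]
so $d_k=\FPdim(V_k)=c\,[H:H_0]$ and hence $[H:H_0]\mid d_k$, where $[H:H_0]$ is the common orbit size and $H_0$ the stabilizer of any element of $\Irr(\mcC_\psi)$.

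The genuinely delicate point is the equivariance identity $N_{k,j}^{\,h\cdot l}=N_{k,j}^{\,l}$ in the second part: it is exactly where the hypothesis that $k$ is $H$-fixed is used, and it is transitivity of the $H$-action on $\Irr(\mcC_\psi)$ that upgrades this symmetry to honest constancy of the multiplicities, which is what lets the orbit size be pulled out as a divisor of $d_k$. Everything else is bookkeeping with the grading and with multiplicativity of $\FPdim$. I would also note that $H_0$ is well defined independently of the chosen orbit representative because $G$, and hence $H$, is abelian, so all point stabilizers within a single $H$-orbit coincide; and that the divisibility $[H:H_0]\mid d_k$ makes sense precisely because the first part has already shown $\mcC_e$ to be integral, so that $d_k=\FPdim(V_k)\in\mbZ$.
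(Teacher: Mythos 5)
Your proof is correct and follows essentially the same route as the paper: the paper merely packages the dimension count through the virtual regular object $R_\psi$ of the component $\mcC_\psi$ and the eigenvector identity $V_k\ot R_\psi=d_kR_\psi$, but the substance --- all simples of $\mcC_\psi$ share one dimension because they form a single $H$-orbit, so decomposing $V_k\ot V_j$ inside $\mcC_\psi$ forces $d_k$ to be a sum of multiplicities, and $H$-fixedness of $k$ forces those multiplicities to be constant across the orbit, giving $d_k=c\,[H:H_0]$ --- is identical to your equivariance argument $N_{k,j}^{h\cdot l}=N_{k,j}^{l}$. The only cosmetic difference is that you work throughout with $\FPdim$ while the paper tracks possibly signed pivotal dimensions $d_h=\pm1$; this changes nothing in the argument and is in fact the cleaner choice for the integrality conclusion.
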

\begin{proof}
Since $H$ is self-centralizing then $H \subseteq \mcC_e$, see Remark \ref{H in C_e}.
Let $R = \sum_{i\in \Irr\(\mcC\)} d_i V_i\in K_0(\mcC)\ot \mathbb{R}$ be the virtual regular object of
$\mcC$. Then $R= \sum_{\chi \in \hat{H}} R_\chi$, where $R_\chi = \sum_{i\in
\Irr\(\mcC_{\chi}\)} d_i V_i$ is the regular object of the component $\mcC_\chi$.

Suppose $\Irr\(\mcC_{\psi}\)=H\cdot j$, with $j\in \Irr\(\mcC_{\psi}\)$. Let
$H_0$ be the stabilizer of the $H$-orbit $\Irr\(\mcC_{\psi}\)$. Note that the quotient group $\ol H= H/H_0$ acts on $j$ as $\ol h\cdot j = h\cdot j$, where $\ol h = h H_0$. {Since $d_j = d_{h\cdot j} = d_h d_j$ for $h \in H_0$, we have $d_h=1$ for $h \in H_0$. Therefore, we can define $d_{\ol h}= d_h$ for all $h \in H$.} Thus
$R_\psi =d_j\sum_{\ol h \in \ol H} d_{\ol h} V_{\ol h \cdot j} =
\frac{d_j}{|H_0|}\sum_{h \in H} d_h V_h^* \o V_j$. Since each $\mcC_\chi$ is a
$\mcC_e$-module category, and $V_k\ot R= d_kR$, for $k \in \Irr\(\mcC_{e}\)$, we have
$V_k \ot R_\chi =d_k R_\chi$, for all $\chi \in \hat{H}$. On the other hand,
\begin{multline}
V_k \ot R_\psi = d_j \sum_{\ol h \in \ol H} d_{\ol h}V_h^* \ot V_j \ot V_k =
d_j \sum_{ \ol h, \ol{h'} \in\ol H} d_{\ol h} n_{\ol{h'}} V_h^* \ot  V_{\ol{h'}\cdot j} \\
=d_j \sum_{\ol h,\ol{h'} \in \ol H}  d_{\ol h} n_{\ol{h'}} V_{\ol{hh'}\cdot j} = d_j \sum_{h,h' \in \ol H} d_{\ol{h'}} n_{\ol{h} \ol{h'^{-1}}} V_{\ol h\cdot j}=
n R_\psi,
\end{multline}
where {$n= \sum_{\ol h \in \ol H} d_{\ol h} n_{\ol h}$ is an integer. Therefore, $d_k =n  \in \mbZ$} and $\mcC_e$ is integral, as claimed.

{If $k$ is fixed by $H$ then $d_h = 1$ for all $h \in H$ and
$$d_k R_\psi = V_k \ot R_\psi = \frac{d_j}{|H_0|}\sum_{h \in H}  V_k \o d_h V_h^* \ot V_j = d_j \frac{|H|}{|H_0|} V_j\ot V_k =
d_j \frac{|H|}{|H_0|} \sum_{\ol{h'} \in \ol H}  n_{\ol{h'}} V_{\ol{h'}\cdot j}$$
Thus, $ \frac{|H|}{|H_0|}  n_{\ol{h}}=d_k$, for all $\ol h \in \ol H$. In particular, $\frac{|H|}{|H_0|} \mid d_k$ and $n_{\ol{h}} = n_{\ol{h'}}$, for $\ol h$, $\ol{h'}\in \ol H$.}
\end{proof}
Since $\chi\in\hat{H}$ is of the form $\frac{S_{h,j}}{d_hd_j}$ for some
$j\in\Irr\(\mcC\)$, for each $\s\in\Gal(\mcC)$ we may define $(\s\circ \chi)(h)=\frac{\s(S_{h,j})}{\s(d_h)\s(d_j)}$. In the following we explore the relationship between the actions of $\Gal(\mcC)$ and $G(\mcC)$.
\begin{lem} Let $\mcC$ be a modular category and $H$ a subgroup of $G(\mcC)$.
\begin{enumerate}
  \item Suppose that $j \in \Irr\(\mcC_{\chi}\)$ for some $\chi \in \hat H$.
  Then, for $\s \in \Gal(\mcC)$ we have $\hs(j) \in \mcC_{\s \circ \chi}$. In
  particular, if $\ord(\chi)=1$ or $2$, $\hs(\Irr\(\mcC_{\chi}\)) =
  \Irr\(\mcC_{\chi}\)$.
  \item If $H$ is self-centralizing elementary 2-group,  then the actions of $H$
    and $\Gal(\mcC)$ on $\Irr\(\mcC\)$ commute.
\end{enumerate}
\end{lem}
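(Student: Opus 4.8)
The plan is to reduce both parts to the single functional equation that \emph{defines} the Galois permutation $\hs$, namely that $\s\in\Gal(\mcC)$ permutes the characters $\phi_j\colon i\mapsto S_{ij}/d_j$ of the Grothendieck ring: for all $i,j$ one has $\s(\phi_j(V_i))=\phi_{\hs(j)}(V_i)$, i.e. $\s\!\left(S_{ij}/d_j\right)=S_{i,\hs(j)}/d_{\hs(j)}$. The only other ingredient is that $d_h=\pm 1$ for $h\in H\subseteq G(\mcC)$, so that $\s(d_h)=d_h$; in particular, since $j\in\Irr(\mcC_\chi)$ means $\chi(h)=S_{hj}/(d_hd_j)$, the definition of $\s\circ\chi$ simplifies to $(\s\circ\chi)(h)=\s(\chi(h))$, which is manifestly again a character of $H$. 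This observation is the engine for both parts.

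For part (i), I would record the grading condition as $j\in\Irr(\mcC_\chi)\iff S_{hj}/(d_hd_j)=\chi(h)$ for all $h\in H$, and then simply apply $\s$. Writing $S_{hj}/(d_hd_j)=\phi_j(V_h)/d_h$ and using the defining equation for $\hs$ together with $\s(d_h)=d_h$, I obtain $S_{h,\hs(j)}/(d_hd_{\hs(j)})=\s(\phi_j(V_h))/d_h=\s(S_{hj}/(d_hd_j))=\s(\chi(h))=(\s\circ\chi)(h)$ for every $h\in H$; this is precisely the statement $\hs(j)\in\Irr(\mcC_{\s\circ\chi})$. The ``in particular'' clause then drops out: when $\ord(\chi)\le 2$ the character $\chi$ is $\{\pm 1\}$-valued, hence $\s$-fixed, so $\s\circ\chi=\chi$ and $\hs$ carries $\Irr(\mcC_\chi)$ into itself; being a bijection of the finite set $\Irr(\mcC)$, it restricts to a bijection of $\Irr(\mcC_\chi)$.

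For part (ii), I would first note that every $h\in H$ is self-dual (the group has exponent $2$), so that $h\cdot j$ is the isomorphism class of $V_h\ot V_j$. The computational heart is the identity $\phi_{h\cdot j}(V_i)=\psi_i(h)\,\phi_j(V_i)$, where $\psi_i(h):=S_{hi}/(d_hd_i)$; this follows by writing $S_{i,h\cdot j}=S_{h\cdot j,i}=d_i\,\phi_i(V_h\ot V_j)=d_i\,\phi_i(V_h)\phi_i(V_j)$ (each $\phi_i$ is a ring homomorphism) and dividing by $d_{h\cdot j}=d_hd_j$. Feeding this through the defining property of $\hs$ gives $\phi_{\hs(h\cdot j)}(V_i)=\s(\phi_{h\cdot j}(V_i))=\s(\psi_i(h))\,\phi_{\hs(j)}(V_i)$, while the same identity applied to $\hs(j)$ gives $\phi_{h\cdot\hs(j)}(V_i)=\psi_i(h)\,\phi_{\hs(j)}(V_i)$. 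Thus the columns indexed by $\hs(h\cdot j)$ and by $h\cdot\hs(j)$ coincide---and hence these two simple objects coincide, since distinct simples have distinct $S$-columns---provided $\s(\psi_i(h))=\psi_i(h)$ for all $i$.

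The single essential point, and the place where the hypothesis on $H$ is genuinely used, is this last invariance $\s(\psi_i(h))=\psi_i(h)$. Here I would observe that $\psi_i\colon g\mapsto S_{gi}/(d_gd_i)$ is a group homomorphism $G(\mcC)\to\mbC^\times$ (a consequence of the multiplicativity of $\phi_i$ together with $d_{g_1g_2}=d_{g_1}d_{g_2}$ for invertibles), so its restriction to the exponent-$2$ group $H$ takes values in $\{\pm 1\}$, which are fixed by any field automorphism. I expect no conceptual obstacle: the work is entirely bookkeeping, the two slightly delicate spots being to pin down the correct functional equation for $\hs$ (keeping the two $S$-indices straight) and to certify that $\psi_i|_H$ is $\{\pm 1\}$-valued. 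It is worth remarking that this argument uses only that $H$ is an elementary $2$-group; the self-centralizing hypothesis is not strictly needed for the conclusion of (ii).
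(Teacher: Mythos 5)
Your proof is correct and follows essentially the same route as the paper: part (i) is the same one-line application of $\s\!\left(S_{ij}/d_j\right)=S_{i\hs(j)}/d_{\hs(j)}$ together with the fact that a character of order dividing $2$ is Galois-fixed, and part (ii) rests on the same central identity $S_{i,h\cdot j}/d_{h\cdot j}=\psi_i(h)\,S_{ij}/d_j$, followed by the observations that $\psi_i(h)=\pm1$ is fixed by $\s$ and that distinct simple classes have distinct $S$-columns. The only divergence is in how that identity is obtained: the paper derives it from the balancing equation, passing through the twists via $\theta_{h\cdot k}=\chi\psi(h)\theta_h\theta_k$, whereas you get it directly from the multiplicativity of the characters $\phi_i$ of $K_0(\mcC)$, which is slightly cleaner and avoids the twists altogether. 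Your closing remark is also accurate: the paper's own argument for (ii) likewise uses only that $H$ is an elementary $2$-group (so that $\psi_i|_H$ is $\{\pm1\}$-valued and the relevant objects are self-dual); the self-centralizing hypothesis is carried along because that is the setting in which the lemma is later applied, not because the proof needs it.
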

\begin{proof}
  (i) For $j \in \Irr\(\mcC_{\chi}\)$, $\s\left(\frac{S_{ij}}{d_j}\right) =
  \frac{S_{i\hs(j)}}{d_{\hs(j)}}$ for all $i \in \Irr\(\mcC\)$. Thus, for $h \in H$,  we have
  $$
  \s(\chi(h)) = \s\left(\frac{S_{hj}}{d_h d_j}\right)=\frac{S_{h\hs(j)}}{d_h d_{\hs(j)}} \,.
  $$
  Therefore, $\hs(j) \in \Irr\(\mcC_{\s\circ \chi}\)$. If $\ord(\chi)\mid 2$, then $\chi(h) = \pm 1$ for all $h \in H$. Hence, $\s \circ \chi = \chi$ for all $\s \in \Gal(\mcC)$. The second statement of (i) follows.

  (ii) Suppose $H$ is self-centralizing elementary 2-group, and let $\chi, \psi
  \in \hat{H}$.  For $h \in H$,  $j \in \Irr\(\mcC_{\chi}\)$ and $i \in
  \Irr\(\mcC_{\psi}\)$ we have $V_i^* \o V_j \in \mcC_{\chi \psi}$ (since $H$ a
  $2$-group implies $V_i^*\in\Irr\(\mcC_{\psi}\)$) and
  $$
 \chi(h)=\frac{S_{h,j}}{d_h d_j}= \frac{\theta_{h\cdot j}} {\theta_h \theta_j}\,.
  $$
  Thus,
  \begin{eqnarray*}
    \frac{S_{i, h\cdot j}}{d_{h\cdot j}} =
   \frac{\sum_{k} N_{i^*, j}^k d_{h\cdot k} \theta_{h\cdot k}} {\theta_i \theta_{h\cdot j}d_h d_j}&=&\frac{\sum_{k} N_{i^*, j}^k d_k \chi\psi(h) \theta_h\theta_k} {\theta_i \chi(h)\theta_h \theta_j  d_j} \\
     &=&\frac{\sum_{k} N_{i^*, j}^k d_k  \theta_k\psi(h)} { \theta_i  \theta_j  d_j}
      =\frac{S_{i,j}\psi(h)} {d_j}\,.
  \end{eqnarray*}
  Therefore,
  $$
   \frac{S_{i, h\cdot \hs(j)}}{d_{h\cdot \hs(j)}} = \frac{S_{i,\hs(j)}\psi(h)}{d_{\hs(j)}}= \s\left(\frac{S_{i,j} \psi(h)} {d_j}\right) =\s\left(\frac{S_{i, h\cdot j}}{d_{ h\cdot j}}\right)
   =\frac{S_{i, \hs(h\cdot j)}}{d_{\hs( h\cdot j)}}\,.
  $$
  Since $\{\phi_j(i):=\frac{S_{i,j}}{d_j}\mid j\in\Irr\(\mcC\)\}$ is a linearly
  independent set of characters of $K_0(\mcC)$, this implies $h\cdot
  \hs(j)=\hs(h\cdot j)$ for all $j\in\Irr\(\mcC\)$, i.e. the actions of $H$ and
  $\Gal(\mcC)$ on $\Irr\(\mcC\)$ commute.
\end{proof}

\subsection{Support cycles}\label{s:support}
Let $n$ be a positive integer, and $p$ an odd prime number. Then, there are unique non-negative integers $a, q$ with $p \nmid q$ such that $n=p^a q$. For the remainder discussion of this paper, we will abbreviate $v_p(n) =a$ and
$k_p(n) =\frac{\varphi(p^a)}{2}$, where $\varphi$ is the Euler's totient function. For any element $x$ in a group of finite order, we will denote $\ord_p(x) = p^{v_p(\ord(x))}$, the maximal $p$-power factor of $\ord(x)$.

 Let $N=\FSexp(\mcC)$ of a modular category $\mcC$. By \cite{DLN1}, the level $n=\ord(t)$ of any normalized modular data $(s,t)$ satisfies $N \mid n \mid 12N$. In particular, the modular data $(s,t)$ is defined over the cyclotomic field $\mbQ_{12 N}$.

 Suppose $p$ is an odd prime and  $12 N=p^b m$  for some positive integer $b$ and $m$ with $p \nmid m$. There exists $\s \in \Gal(\mbbQ_{ab})$, where $\mbbQ_{ab}$ is the abelian closure of $\mbbQ$ in $\mbbC$, such that  $\s$ fixes $\mbbQ_{m}$ and $\s|_{\mbbQ_{p^{b}}}$ is a generator of the cyclic group  $\GalQ{p^{b}}\cong \mbbZ_{p^a}^*$.  We call such $\s$ \textbf{$p$-automorphisms} of $\mcC$.

 Associated with a $p$-automorphism $\s$ is a permutation $\hs$ on $\Irr\(\mcC\)$, which can be decomposed into a product of unique disjoint cycles. For notational convenience we identify a cycle with its support.  Suppose $C$ is a cycle in this decomposition and $j \in C$ and $\ord_p(t_j) = \ell$, where $t_j$ is the $j$-th diagonal entry of $t$ in a normalized modular data $(s,t)$. Then, by Galois symmetry (cf. \cite{DLN1}),
 $$
 \ell = \ord_p(\s^2(t_j)) = \ord_p(t_{\hs(j)}).
 $$
  Thus, $\ord_p(t_i) = \ell$ for all $i \in C$, and we call $C$ a  \textbf{$p^\ell$-support cycle} of $\s$ if $\ell > 0$. If $a = v_p(n)> 0$, a $p^a$-support cycle of $\s$ is called a \textbf{maximal} support cycle of $\s$.
\begin{remark}
If $(s',t')$ is another normalized modular data of $\mcC$, then $(s',t')=(sx^{-3}, tx)$ for some $12$-th root of unity $x$.
Therefore,  all the $p^\ell$-support cycles are \emph{independent} of the choice of the normalized pair $(s,t)$ if $p> 3$ or $\ell > 1$. However, the $1$ and $3$-support cycles of a $3$-automorphism could be changed if $3\mid \ord(x)$. We can resolve this particular ambiguity by using only a fixed family of normalizations for some modular categories.
\end{remark}
\begin{lemma}\label{l:tech1}
Suppose  $\mcC$ is a modular category and $(s,t)$ a normalized modular data of level $n=p^a q$ for some odd prime $p$ and positive integers $a, q$ such that $p\nmid q$. Then $\s$ admits a maximal support cycle. For any  $p^\ell$-support cycle $C$  of  $\s$ with $1 \le \ell \le a$,  we have
$$
\frac{\varphi(p^\ell)}{2} \mid \ord(C) \mid 2k_p(n)=\varphi(p^a).
$$
In particular, for the maximal support cycle we have
$k_p(n) \mid \ord(\hs)  \mid 2k_p(n)$.

If $1$ is in a $p^\ell$-support cycle $C$ of $\s$ and $t_1= x\z$ for some $x,\z \in \mbC$ with $\ord(\z)=p^\ell$ and $x^q=1$,   then $t_{\hs^j(1)} = x\s^{2j}(\z)$ for all $j$. In particular, $\ord_p(t_j)=p^\ell$ for all $j \in C$.
\end{lemma}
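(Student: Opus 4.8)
The plan is to extract everything from the Galois symmetry $\s^2(t_i)=t_{\hs(i)}$, used together with two features of a $p$-automorphism: that $\s$ fixes $\mbbQ_m$ (so it fixes the prime-to-$p$ roots of unity occurring in the $t_i$), and that $\s$ restricts to a \emph{generator} of $\GalQ{p^c}$ for every $c\le b$, since the restriction maps between cyclotomic Galois groups are surjective. For existence of a maximal support cycle, note that $n=\ord(t)=\lcm_{j}\ord(t_j)$ and $v_p(n)=a$, so some diagonal entry must satisfy $v_p(\ord(t_j))=a$, i.e. $\ord_p(t_j)=p^a$; the $\hs$-cycle through such a $j$ is then a $p^a$-support cycle.

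Next I would prove the explicit description of the $t$-values along a cycle, which simultaneously re-establishes that $\ord_p$ is constant on a cycle. Writing $t_1=x\z$ with $\ord(\z)=p^\ell$ and $x^q=1$ merely records the $p$-part $\z$ and the prime-to-$p$ part $x$ of the root of unity $t_1$ (the latter has order dividing $q\mid m$). Since $\s$ fixes $\mbbQ_m$, we have $\s^2(x)=x$, and Galois symmetry gives $t_{\hs(1)}=\s^2(t_1)=x\,\s^2(\z)$; an immediate induction yields $t_{\hs^{j}(1)}=x\,\s^{2j}(\z)$ for all $j$. Because $\s^{2j}(\z)$ is a Galois conjugate of $\z$ it again has order $p^\ell$, and being coprime to $\ord(x)\mid q$ we get $\ord_p(t_{\hs^{j}(1)})=p^\ell$, so $C$ is indeed a $p^\ell$-support cycle.

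For the divisibility bounds I would treat the two inequalities separately; the lower bound is ``local'' (seen on the $t$-entries) while the upper bound is ``global'' (a statement about $\mbK_{\mcC}$), and I expect the latter to be the main obstacle. For the lower bound, fix a $p^\ell$-support cycle $C$, pick $j\in C$, and let $\z'$ be the $p$-part of $t_j$, of order $p^\ell$. The relation $\hs^{\ord(C)}(j)=j$ forces $\s^{2\ord(C)}(\z')=\z'$ by the formula above, so $\ord(C)$ is a multiple of the order of $\s^2$ acting on $\z'$. As $\s$ restricts to a generator of $\GalQ{p^\ell}$, it permutes the primitive $p^\ell$-th roots of unity transitively, so its order on $\z'$ is $\varphi(p^\ell)$; since $p$ is odd this is even, whence $\s^2$ has order $\varphi(p^\ell)/2$ on $\z'$ and $\varphi(p^\ell)/2\mid\ord(C)$.

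For the upper bound I would pass from the permutation to the underlying field automorphism. Faithfulness of the $\Gal(\mcC)$-action on the simple objects gives $\ord(\hs)=\ord(\s|_{\mbK_{\mcC}})$. Now $\mbK_{\mcC}\subseteq\mbbQ_{N}\subseteq\mbbQ_{n}$ and $\mbbQ_{n}=\mbbQ_{p^a}\cdot\mbbQ_{q}$ with $\mbbQ_{p^a}\cap\mbbQ_{q}=\mbbQ$; since $\s$ fixes $\mbbQ_{q}\subseteq\mbbQ_{m}$ and restricts to a generator of $\GalQ{p^a}$, its order on $\mbbQ_{n}$ is $\varphi(p^a)$, so its order on the subfield $\mbK_{\mcC}$ divides $\varphi(p^a)=2k_p(n)$. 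Thus every cycle length divides $2k_p(n)$, giving the upper bound. Applying the lower bound to a maximal support cycle $C$ then yields $k_p(n)=\varphi(p^a)/2\mid\ord(C)\mid\ord(\hs)\mid 2k_p(n)$, so in particular $k_p(n)\mid\ord(\hs)\mid 2k_p(n)$. The only input that is not purely formal is the containment $\mbK_{\mcC}\subseteq\mbbQ_{N}$ (equivalently the exact sequence recalled in the Preliminaries), which powers the upper bound; the lower bound needs only the $t$-entries through Galois symmetry.
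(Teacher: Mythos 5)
Your proposal is correct and follows essentially the same route as the paper: existence of a maximal support cycle from $v_p(\ord(t))=a$, the formula $t_{\hs^j(1)}=x\,\s^{2j}(\z)$ via Galois symmetry, the lower bound from the orbit of $\z$ under $\s^2$ having size $\varphi(p^\ell)/2$, and the upper bound from $\ord(\s|_{\mbbQ_{p^a}})=2k_p(n)$ together with $\mbK_{\mcC}\subseteq\mbbQ_N\subseteq\mbbQ_n$. The only difference is that you spell out the upper bound (via faithfulness and the splitting $\mbbQ_n=\mbbQ_{p^a}\cdot\mbbQ_q$) more explicitly than the paper does.
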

\begin{proof}  Let $k=k_p(n)$. Since $\ord(\s|_{\mbbQ_{p^a}}) = 2k$, $\ord(\hs) \mid 2 k$ and  the length of each disjoint cycle of $\hs$ is a divisor of $2k$.

  Since $\ord_p(n)=a$, there exists $j_0 \in \Irr\(\mcC\)$ such that $\ord_p(t_{j_0}) = p^a$. Therefore, the disjoint cycle of $\hs$ containing $j_0$ is a $p^a$-support cycle of  $\s$.

 Suppose $C$ is a $p^\ell$-support cycle of $\s$, for some $\ell \le a$, with $1 \in C$. Then $t_1 = x\z$ for some $x, \z \in \mbC$ such that $\ord(\z)=p^\ell$ and $x^q=1$.  Let $k' = \varphi(p^\ell)/2$. Note that
$$
   t_1 , \s^{2}(t_1), \dots,\s^{2(k'-1)}(t_1)
$$
 are distinct, and $\s^{2k'}(t_1) = t_1$.   By Galois symmetry,
 $$
  \s^{2j}(t_1) =t_{\hs^j(1)}  =  x  \s^{2j}(\z)  \text{ for all } j.
 $$
 In particular,  $1, \hs(1), \dots, \hs^{k'-1}(1)$ are distinct, and $\ord_p(t_{\hs^j(1)})=p^\ell$. Therefore,  the length of the cycle $C$  is a multiple $k'$ and so
$$
k' \mid \ord(C) \mid \ord(\hs) \mid 2k\,.
$$
For $\ell=a$, we find $k \mid \ord(C) \mid  \ord(\hs) \mid 2k$.
\end{proof}
If the anomaly $\a=p_+/p_-$ of $\mcC$ is such that $3 \nmid \ord(\a)$, one can choose a $6$-th root $\l$ of $\a$ such that $\frac{p_+}{\l^3} =\sqrt{\dim \mcC}$ and $3 \nmid \ord(\l)$. The $3$-support cycle of a $3$-automorphism of $\mcC$ is independent of the choice of any of these normalizations. Moreover, the level $n$ of any of these normalized modular satisfies $\FSexp(\mcC) \mid n \mid 4 \FSexp(\mcC)$.

\subsection{Support cycles for weakly integral modular categories}\label{s:support weakly integral}

We are particularly interested in weakly integral modular categories as their anomaly $\a = \frac{p_+}{p_-}$ can only be an $8$-th root of unity (cf. \cite{DLN1}). For any weakly integral modular category $\mcC$, we only consider the normalized modular data $(s,t) = (\frac{S}{D}, \frac{1}{\l}T)$ with $\l= \sqrt{\a}^5$ where $\sqrt{\a}D = p_+$.  A $3$-support cycle for a $3$-automorphism of $\mcC$ for this normalized modular data is then well-defined and we have a more refined statement for $p^\ell$-support cycles for a weakly integral modular category.
\begin{lem} \label{l:tech2}
Suppose $p$ is an odd prime and $\s$ is a $p$-automorphism of a weakly integral modular category  $\mcC$ such that $v_p(\FSexp(\mcC))=a> 0$.  Then,  the following statements are equivalent for any positive integer $\ell \le a$:
\begin{enumerate}
\item $C$ is a $p^\ell$-support cycle of $\s$;
\item $\ord_p(\theta_{j})=p^\ell$ for some $j \in C$;
\item $\ord_p(\theta_{j})=p^\ell$ for all $j \in C$.
\end{enumerate}
If $C=(1\, 2\,\dots\, l )$ and $\theta_1 = x \z$ for some $x, \z \in \mbC$ with $\ord(\z)=p^\ell$ and $x^q=1$ with $p\nmid q$, then $\theta_j = x\s^{2j}(\z)$ for $j=1, \dots, l$. In particular, if $C_0$ is the cycle of $\s$ containing $0$, then $\theta_j =1$ for all $j \in C_0$ and $\ord(C_0) \le 2$. Moreover, $V_{\hs(0)}$ is a self-dual invertible object.
\end{lem}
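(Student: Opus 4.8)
The plan is to reduce the bulk of the statement to two elementary facts about the chosen normalization of a weakly integral category and then to isolate the genuinely new input for the last sentence. First I would record that, since the anomaly $\alpha=p_+/p_-$ is an $8$-th root of unity, the scalar $\lambda=\sqrt{\alpha}^{5}$ is a $16$-th root of unity, so $\ord(\lambda)$ is a power of $2$; and that a $p$-automorphism $\sigma$ (with $p$ odd) fixes every root of unity of order prime to $p$, in particular $\lambda$ and any $x$ with $x^{q}=1$ (as $q\mid m$). From $t_j=\theta_j\lambda^{-1}$ together with $\gcd(\ord(\lambda),p)=1$ one gets, by separating $p$-parts, that $\ord_p(t_j)=\ord_p(\theta_j)$ for every $j$. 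These observations would open the proof and drive everything that follows.

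For the equivalence of (i)--(iii): by definition $C$ is a $p^\ell$-support cycle exactly when $\ord_p(t_j)=p^\ell$ for all $j\in C$, and rewriting this via $\ord_p(t_j)=\ord_p(\theta_j)$ turns it into (iii), giving (i)$\Leftrightarrow$(iii). The implication (iii)$\Rightarrow$(ii) is trivial, and (ii)$\Rightarrow$(i) follows because $\ord_p(t_j)$ is constant along any cycle of $\hat\sigma$: indeed $t_{\hat\sigma(j)}=\sigma^{2}(t_j)$ by Galois symmetry, and Galois-conjugate roots of unity have equal order, so $\ord_p(t_j)$ is a cycle invariant. This is essentially already contained in Lemma \ref{l:tech1}.

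For the explicit formula, since $\sigma$ fixes $\lambda$ we have $\theta_{\hat\sigma(i)}=\lambda\,t_{\hat\sigma(i)}=\lambda\,\sigma^{2}(t_i)=\sigma^{2}(\theta_i)$, and iterating gives $\theta_{\hat\sigma^{\,j}(i)}=\sigma^{2j}(\theta_i)$. Writing $\theta_1=x\zeta$ with $\ord(\zeta)=p^\ell$ and $x^{q}=1$, and using that $\sigma$ fixes the prime-to-$p$ factor $x$, the stated relation $\theta_j=x\,\sigma^{2j}(\zeta)$ along $C=(1\,2\,\cdots\,l)$ follows. Applied to the cycle $C_0$ through $0$, where $\theta_0=1$ forces $\zeta=1$, this immediately yields $\theta_j=1$ for all $j\in C_0$.

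The substantive part is the final sentence, and here I would leave the twist equations and bring in the $S$-matrix. The Galois action permutes the columns of $S$, giving $\sigma(d_j)=S_{\hat\sigma(0),j}$ for all $j$ (the relevant sign is forced to be $+1$ by evaluating at $j=0$, since $\hat\sigma(0)\in G(\mcC)$ is invertible), exactly the identity already exploited in the proof of Theorem \ref{t:4m}. Now weak integrality enters decisively: each $d_j=\sqrt{n_j}$ with $n_j\in\mbZ$, so $\sigma(d_j)=\pm d_j$. Hence, for $g=\hat\sigma(0)$, the character value $\phi_j(g)=S_{g,j}/d_j=\sigma(d_j)/d_j=\pm1$ for every $j$; since $\phi_j$ is a character of the Grothendieck ring, $\phi_j(g\otimes g)=\phi_j(g)^{2}=1$, so $S_{g^{2},j}=d_j=S_{0,j}$ for all $j$, and invertibility of $S$ forces $g\otimes g\cong\one$, i.e.\ $V_{\hat\sigma(0)}$ is a self-dual invertible object. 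For the length bound, applying the same column identity to $\sigma^{2}$ (with $\hat\sigma^{2}(0)\in G(\mcC)$) gives $S_{\hat\sigma^{2}(0),j}=\sigma^{2}(d_j)=d_j=S_{0,j}$ for all $j$, whence $\hat\sigma^{2}(0)=0$ and $\ord(C_0)\le2$. I expect this $S$-matrix step---extracting $\sigma(d_j)=S_{\hat\sigma(0),j}$ and exploiting $\sigma(d_j)=\pm d_j$---to be the crux, while the remainder is bookkeeping with orders of roots of unity.
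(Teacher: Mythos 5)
Your proof is correct and follows essentially the same route as the paper: both hinge on $\ord_p(t_j)=\ord_p(\theta_j)$ because $\lambda=\sqrt{\alpha}^{5}$ has $2$-power order prime to $p$, on the Galois symmetry $t_{\hs(j)}=\s^2(t_j)$ for the explicit formula and the cycle invariance of $\ord_p(t_j)$, and on $\s^2(d_j)=d_j$ (weak integrality) to bound $\ord(C_0)$ and identify $V_{\hs(0)}$. The only cosmetic difference is at the very end: you deduce self-duality from $\phi_j(g)^2=1$ and the ring-character property, whereas the paper notes that the $g$-th row $S_{g,j}=\pm d_j$ is real and cites \cite[Lem.~6.1]{GN} for invertibility; both are valid.
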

\begin{proof}
  Let $t = \sqrt{\a}^5 T$ where $\sqrt{\a}= p_+/D$ is a $16$-th root of unity. Then $\ord_p(t_j) = \ord_p(\theta_j)$ for all $j$, and so the equivalence of the three conditions follows from Lemma \ref{l:tech1}. Note that $t_0$ is a $16$-th root but $p \mid \ord(t_j)$ for any $j$ in a $p^\ell$-support cycle $C$ of $\s$ ($\ell >0$). Therefore, $0 \not\in C$. If $C=(1\, 2\,\dots\, l )$ and  $\theta_1 = x \z$ for some $x, \z \in \mbC$ with $\ord(\z)=p^\ell$ and $x^q=1$, then $t_1 = \sqrt{\a}^5 x \z$ and so $t_j= \sqrt{\a}^5 x\s^{2j}(\z)$. Thus, $\theta_j = x\s^{2j}(\z)$.

  Since $\mcC$ is weakly integral, $\s^2(d_i) = d_i$ for all $i \in
  \Irr\(\mcC\)$. This implies the length of $C_0$ is at most 2. Since
  $\theta_0=1$, $\theta_{\hs(0)} = 1$. If $g=\hs(0) \ne 0$, then $S_{g,j} = \pm
  S_{0,j} = \pm d_j$ for all $j \in \Irr\(\mcC\)$. By \cite[Lem. 6.1]{GN}, $V_g$ is invertible. Since $g$-th row of the $S$-matrix is real, $V_g$ is self-dual.
 \end{proof}
 \begin{theorem}\label{t:gal}
Let $\mcC$ be a weakly integral modular category with $N=\FSexp(\mcC)$ which has a simple odd prime factor $p$.
 Suppose a $p$-automorphism $\s$ of $\mcC$ has only one $p$-support cycle $C_1=(1\,\dots\, l)$. Then:
\begin{enumerate}
  \item $\dim \mcC= \(\frac{l}{p-1}\)^2 d_1^4 p$, $\s(d_1)=d_1$, $\s(p_+)=-p_+$ and $\theta_1 = x \z$, for some $x, \z \in \mbC$ with $\ord(\z)=p$ and $x^{16}=1$. In particular, if $l=\frac{p-1}{2}$, then  $2 \mid d_1^2$.
  \item If $\mcC$ is integral and $\Gal(\mcC)$ is generated by $\hs$, then $\hs$ is a cycle of length $p-1$ and $\mcC$ is pointed of rank $p$. In particular, $\mcC$ is a cyclic modular category.
  \item If $\mcC$ is a strictly weakly integral and $\Gal(\mcC)$ is generated by $\hs$, then $\hs=(0,1)(2,\dots, \frac{p+1}{2})$, up to a relabeling of the simple objects, and $\mcC$ is a prime modular category of $\dim\mcC = 4p$. In particular, $\mcC$ is of rank $\frac{p+7}{2}$, and equivalent to a $\mbZ_2$-equivariantization of a Tambara-Yamagami category associated to the group $\mbZ_p$.
\end{enumerate}
\end{theorem}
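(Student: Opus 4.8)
The engine is a Galois analysis of the Gauss sum $p_+=\sum_i d_i^2\theta_i$. First choose the $p$-automorphism $\hs$ (coming from $\sigma$) so that $\sigma$ fixes every root of unity of order prime to $p$; this is possible because $p$ is odd, and it makes $\sigma$ fix $\sqrt\alpha$ and each $d_i^2\in\mbZ$. Writing $p_+=\sqrt\alpha\sqrt{\dim\mcC}$ and noting $\sigma(\sqrt{\dim\mcC})=\pm\sqrt{\dim\mcC}$ (as $\dim\mcC\in\mbZ$), we get $\sigma(p_+)=\pm p_+$. Now split $p_+=A+B$ with $A=\sum_{i\notin C_1}d_i^2\theta_i$ and $B=\sum_{i\in C_1}d_i^2\theta_i$. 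Every $\theta_i$ occurring in $A$ is prime to $p$ (since $C_1$ is the only $p$-support cycle), so $\sigma(A)=A$. By Lemma~\ref{l:tech2}, on $C_1$ we have $d_i=d_1$ and $\theta_i=x\,\sigma^{2i}(\zeta)$ with $\ord\zeta=p$; since $\sigma|_{\mbQ(\zeta_p)}$ generates $(\mbZ/p)^\times$, the arguments $\sigma^{2i}(\zeta)$ run, each with multiplicity $\tfrac{2l}{p-1}$, over the quadratic residues. Hence $B=\tfrac{2l}{p-1}\,d_1^2x\,\eta$, where $\eta=\sum_{a}\zeta^{a}$ (sum over the quadratic residues mod $p$) satisfies $\eta+\sigma(\eta)=-1$ and $\eta-\sigma(\eta)=\sqrt{p^{*}}$, with $p^{*}=(-1)^{(p-1)/2}p$.

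\textbf{Completing (i).} Then $\sigma(p_+)-p_+=\sigma(B)-B=-\tfrac{2l}{p-1}d_1^2x\sqrt{p^{*}}\neq0$, which rules out $\sigma(p_+)=p_+$ and forces $\sigma(p_+)=-p_+$. Substituting into $A+\sigma(B)=-(A+B)$ gives $A=\tfrac{l}{p-1}d_1^2x$, so that $p_+=\tfrac{l}{p-1}d_1^2x(1+2\eta)=\tfrac{l}{p-1}d_1^2x\sqrt{p^{*}}$. Taking modulus yields $\dim\mcC=|p_+|^2=\big(\tfrac{l}{p-1}\big)^2 d_1^4\,p$, and comparing with $p_+=\sqrt\alpha\sqrt{\dim\mcC}$ gives $x=\sqrt\alpha\,\sqrt p/\sqrt{p^{*}}$, a $16$th root of unity, so $x^{16}=1$; when $l=\tfrac{p-1}2$ the integrality of $\dim\mcC=\tfrac14 d_1^4p$ forces $2\mid d_1^2$. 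For $\sigma(d_1)=d_1$, note $g:=\hs(0)$ is invertible with $\theta_g=1$ (Lemma~\ref{l:tech2}); the balancing equation~\eqref{balancing equation} gives $\chi_X(g)=S_{g,X}/d_X=\theta_{g\cdot X}/\theta_X$ for $X\in C_1$, and since $\sigma(d_1)=S_{X,\hs(0)}/d_{\hs(0)}=\chi_X(g)\,d_1$ is a real eigenvalue of $N_X$, we get $\chi_X(g)=\pm1$; the value $+1$ (equivalently, $p$ does not divide the square-free part of $d_1^2$) is the one delicate sign to pin down.

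\textbf{Strategy for (ii) and (iii).} Here I would first turn the hypothesis $\Gal(\mcC)=\langle\hs\rangle$ into an arithmetic constraint: as $\sigma$ fixes $\mbQ(\zeta_m)$ (the maximal prime-to-$p$ cyclotomic subfield of $\mbQ(\zeta_{12N})$) pointwise while $\hs$ generates $\Gal(\mcC)=\Gal(\mbK_\mcC/\mbQ)$, any element of $\mbK_\mcC\cap\mbQ(\zeta_m)$ is $\langle\hs\rangle$-fixed, so $\mbK_\mcC\cap\mbQ(\zeta_m)=\mbQ$. Consequently no dimension $\sqrt k$ with $k$ prime to $p$ (in particular $\sqrt2$) can occur, and by part~(i) together with $\sigma(d_1)=d_1$ the support dimension $d_1$ must be an integer. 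In case~(ii) the unit column $(d_i)$ is rational, whence $\hs(0)=0$ and $\mbK_\mcC\subseteq\mbQ(\zeta_p)$; the dimension formula, the parity obstruction (which excludes $l=\tfrac{p-1}2$ once $d_1=1$), and the orbit structure of $\hs$ then drive $\mcC$ to $d_1=1$, $l=p-1$, $\dim\mcC=p$, i.e.\ pointed of rank $p$ with $\hs$ a single $(p-1)$-cycle; a pointed modular category of prime dimension is cyclic. In case~(iii), having excluded $d_1=\sqrt2$ and non-integral support, the formula $\dim\mcC=\nu^2d_1^4p$ forces $\nu=\tfrac12$ and $d_1=2$, so $\dim\mcC=4p$, the support consists of the $\tfrac{p-1}2$ objects of dimension $2$, $g=\hs(0)\neq0$ gives $\dim\mcC_{pt}=2$, the two remaining (dimension-$\sqrt p$) objects are fixed by $\hs$, and the cycle type is $(0\,1)(2\cdots\tfrac{p+1}2)$ of rank $\tfrac{p+7}2$. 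With $\dim\mcC=4p$ and $\dim\mcC_{pt}=2$, Theorem~\ref{t:4m}(b) (with $k=p$, $n=1$) applies and identifies $\mcC$ as the prime, metaplectic category $TY(\mbZ_p,\chi,\nu)^{\mbZ_2}$.

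\textbf{Main obstacle.} The crux is the ``minimality'' step hidden in (ii) and (iii): converting the field constraint $\mbK_\mcC\cap\mbQ(\zeta_m)=\mbQ$ and the dimension formula into the exact assertion that no simple objects beyond $C_0$, $C_1$ and the (at most two) $\hs$-fixed non-integral objects occur, and that $d_1$ is forced ($1$ in (ii), $2$ in (iii)). I expect to accomplish this by feeding the candidate value of $\dim\mcC$ back into $\dim\mcC=\sum_i d_i^2$ and the orthogonality relation~\eqref{orthogonality}, using that $\hs$ has precisely the predicted orbits to pin the rank; once $\dim\mcC$ and $\dim\mcC_{pt}$ are determined the cited structural results finish the classification. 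The only other delicate point is the sign $\sigma(d_1)=d_1$ in (i), handled by the eigenvalue dichotomy above.
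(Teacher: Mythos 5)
Your treatment of the first half of (i) is essentially the paper's: split $p_+=\sum_i d_i^2\theta_i$ into the support-cycle contribution and the $\s$-fixed remainder, evaluate the quadratic Gauss sum, and use $\mbQ_q$-linear independence of $\{1,\sqrt p\}$ to extract $\dim\mcC=\left(\frac{l}{p-1}\right)^2d_1^4p$, $\s(p_+)=-p_+$, $x^{16}=1$ and the parity consequence; that part is sound. The first genuine gap is $\s(d_1)=d_1$. You correctly reduce it to showing $\chi_X(g)=\theta_{g\cdot X}/\theta_X=+1$ for $X$ in the support cycle, but the eigenvalue dichotomy only yields $\pm1$, and you explicitly leave the sign open. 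The paper closes it by contradiction: if $\s(d_1)=-d_1$ then $C_0=(0,g)$ with $g=\hs(0)\ne 0$ and $S_{1,C_0}=d_1+(-d_1)=0$, so summing the $j=0$ and $j=g$ instances of the twist equation annihilates the support-cycle term and exhibits $2p_+$ as a $\s$-fixed quantity, contradicting $\s(p_+)=-p_+$. Some argument of this kind is required; the claim is not formal, and your proposal does not supply it.

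The larger gap is in (ii) and (iii). The field-theoretic observation $\mbK_{\mcC}\cap\mbQ(\zeta_m)=\mbQ$ is correct but does not exclude additional cycles of \emph{integral} simple objects outside $C_1$ whose twists are roots of unity of order prime to $p$; their dimensions and number are unconstrained at that stage, so ``feeding $\dim\mcC$ back into $\sum_i d_i^2$ and orthogonality'' cannot rule them out. This exclusion is precisely where the paper's proof does its real work: for each cycle $C_a$ of integral objects outside $C_1$ it takes the $\s$-antiinvariant part of the summed twist equation and compares with $2p_+=d_1^2x\sum_r\bigl(\s^{2r}(\z)-\s^{2r+1}(\z)\bigr)$ to get $S_{r,j}=d_rd_j\theta_{C_a}$ for $r\in C_1$, then shows $\theta_{C_a}=1$ by comparing the $j=0$ and $j=c_b$ instances; hence every such object centralizes the integral part, and modularity (triviality of the M\"uger center in (ii), resp.\ $\dim\langle I\rangle'=2$ from the $\mbZ_2$-grading in (iii)) forces $\tilde I=\{0\}$ or $\tilde I=\{0,g\}$. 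Only after that do the Diophantine equations $\left(\frac{l}{p-1}\right)^2d_1^4p=1+ld_1^2$ and $\left(\frac{l}{p-1}\right)^2\frac{d_1^4p}{2}=2+ld_1^2$ pin down $d_1$ and $l$. You name this minimality step as the ``main obstacle'' but give no mechanism for it, so as written (ii) and (iii) are not proved.
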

\begin{proof} Since $\mcC$ is pseudo-unitary, we may assume $d_i > 0$ for all $i
  \in \Irr\(\mcC\)$ by using the canonical pivotal structure of $\mcC$. Suppose $N=pq$ for some non-negative integer $q$ relatively prime to $p$. Then  $\theta_1 = x \z$ for some $x, \z \in \mbC$ with $\ord(\z)=p$ and $x^q=1$.  By Lemma \ref{l:tech2},  $\theta_j = x\s^{2j}(\z)$, for $j=1, \dots, l$, and $l = p-1$ or $\frac{p-1}{2}$. Suppose $\hs = C_0 C_1 C_2 \cdots C_m$ where $C_0$ is the cycle containing $0$. Note that $d_j=d_{C_i}$  for all $j \in C_i$. In particular,  $d_{C_1}=d_1$. Since $C_1$ is the only $p$-support cycle of $\s$, we also have $\theta_j=\theta_{C_i} \in \mbbZ[\zeta_q]$  for all $j \in C_i$ if $i \ne 1$.  Let us denote $l_i = \ord(C_i)$ and $C_i=(c_{i}\, \hs(c_{i})\,\hs^2(c_i)\,\dots)$. Now we consider the twist equation \eqref{twist equation} for $S_{0j}$:
  \begin{eqnarray}
 p_+ d_j \ol\theta_j  & = &  d_1  \sum_{r=1}^l  S_{jr}  \theta_r+  \sum_{i\ne 1}  d_{C_i}\theta_{C_i} \sum_{r\in C_i} S_{jr} \label{eq:quadratic0} \\
 & = &  d_1^2 x \sum_{r=0}^{l-1} \frac{S_{j, 1+r}}{d_1} \s^{2r}(\z)+  \sum_{i\ne 1}  d_{C_i}^2\theta_{C_i} \sum_{r\in C_i} \frac{S_{jr}}{d_r}\,. \label{eq:0.2}
 \end{eqnarray}
For any $i$, we denote
 $$
S_{j, C_i}= \sum_{r \in C_i} \frac{S_{jr}}{d_r} = \sum_{r=0}^{l_i-1} \s^{r-1}\(\frac{S_{j, c_i}}{d_{C_i}} \) .
$$
Then $\s$ fixes $S_{j, C_i}$, and so
 $\sum_{i \ne 1} d_{C_i}^2 \theta_{C_i} S_{j, C_i}$, and \eqref{eq:0.2} becomes
 \begin{equation}\label{eq:0.3}
    p_+ d_j \ol\theta_j =  d_1^2 x \sum_{r=0}^{l-1} \s^r\left(\frac{S_{j, 1}}{d_1}\right) \s^{2r}(\z)+  \sum_{i\ne 1}  d_{C_i}^2\theta_{C_i} S_{j, C_i}\,.
 \end{equation}

 Therefore, for $j=0$,   we have
 \begin{eqnarray}\label{eq:quadratic1}
 \sqrt{\a}D = p_+
 & = &  d_1^2 x \sum_{r=0}^{l-1} \s^{2r}(\z)+  \sum_{i \ne 1} d_{C_i}^2 \theta_{C_i} l_i \label{eq:0.4}\\
             & = & \frac{2 l d_1^2 x}{p-1} \(\frac{-1\pm \varepsilon \sqrt{p}}{2}\)  +  \sum_{i \ne 1} d_{C_i}^2 \theta_{C_i} l_i  \\
             & = &  \pm  \frac{l d_1^2 x}{p-1}\varepsilon \sqrt{p}  +  \frac{-l d_1^2 x}{p-1} + \sum_{i \ne 1} d_{C_i}^2 \theta_{C_i} l_i  \label{eq:0.5}
 \end{eqnarray}
 where $\varepsilon=\sqrt{\(\frac{-1}{p}\)}$ and $\(\frac{\cdot}{p}\)$ is the Legendre symbol. Since $\{1, \sqrt{p}\}$ is linearly independent over $\mbbQ_q$, it follows from \eqref{eq:0.5} that
\begin{equation}\label{eq:00}
\sqrt{\a} D = \pm \frac{d_1^2 x l}{p-1} \varepsilon \sqrt{p} \quad \text{and}\quad
\frac{l d_1^2 x}{p-1} =  \sum_{i \ne 1} d_{C_i}^2 \theta_{C_i} l_i
\,.
\end{equation}
This implies $\a = x^2 \(\frac{-1}{p}\)$,  $\dim \mcC = \(\frac{l}{p-1}\)^2 d_1^4 p$, and $\s(p_+)=-p_+$. Therefore,
\begin{equation} \label{eq:p1}
2 p_+ = d_1^2 x \sum_{r=0}^{l-1} \(\s^{2r}(\z) - \s^{2r+1}(\z)\)\,.
\end{equation}
We now show that $d_1$ is fixed by $\s$.
Suppose $\s(d_1) \ne d_1$, then $\s(d_1)=-d_1$ as $d_1$ is square root of an
integer. Since $\s^2(d_j)=d_j$ for all $j \in \Irr\(\mcC\)$, we get $l_0=2$ and we write $C_0 = (0, \hs(0))$. Thus,
$$
S_{1, C_0} = d_1+S_{1,\hs(0)} = d_1 + (-d_1) =0\,.
$$
By Lemma \ref{l:tech2}, $\theta_{\hs(0)}=1$. Summing \eqref{eq:0.3} for $j=0, \hs(0)$, we find
$$
  2 p_+  =  \sum_{i\ne 1} d^2_{C_i}\theta_{C_i} (S_{0,C_i}+S_{\hs(0), C_i}) \,.
$$
However, this implies $p_+$ is fixed by $\s$, which is a contradiction. Therefore, $\s$ fixes $d_1$.
This completes the proof of (i).

We now assume $\Gal(\mcC)=\langle \hs \rangle$ for the remainder proofs of (ii) and (iii).
Then, it follows from (i) that $d_1, S_{j,C_i} \in \mbbZ$
for all $i$, and $d_{C_i}^2 \theta_{C_i} S_{j,C_i} \in \mbbZ[\zeta_q]$ for $i \ne 1$.

Let $I = \{j \in \Irr\(\mcC\) \mid d_i \in \mbZ\}$, $I^c = \Irr\(\mcC\) \setminus I$ and $\tilde I= I \setminus \{1, \dots, l\}$, and let
$\langle I \rangle$ be the tensor subcategory generated by $I$. For any disjoint cycle $C_a$ of $\s$,  we write $C_a \subset \tilde I$ if all the entries of $C_a$ are in $\tilde I$. Similarly, we write $C_a \subset I^c$ if all the entries of $C_a$ are in $I^c$.

Suppose $C_a \subset \tilde I$. By summing $j \in C_a$ of \eqref{eq:0.3}, we find
\begin{equation*}
  l_a p_+ d_{C_a} \ol \theta_{C_a}  = d_1 d_{C_a} x S_{1, C_a}  \sum_{r=0}^{l-1} \s^{2r}(\z) + \sum_{i\ne 1 \atop j \in C_a }  d_{C_i}^2\theta_{C_i} S_{j, C_i}\,.
\end{equation*}
Since the last term of this equation is fixed by $\s$, the difference of this equation and its image under $\s$ is given by
\begin{equation*}
 2 l_a p_+ d_{C_a} \ol \theta_{C_a}  =  d_1 d_{C_a} x S_{1, C_a}  \sum_{r=0}^{l-1} \s^{2r}(\z) - \s^{2r+1}(\z)
\end{equation*}
By \eqref{eq:p1}, we find
$
S_{1, C_a}  = l_a d_1  \ol\theta_{C_a}\,.
$
Hence $\theta_{C_a} = \pm 1$ and $S_{r, C_a}= l_a d_1  \ol\theta_{C_a}$, for all $r\in C_1$.
Since $|S_{r,j}|\le d_1 d_{C_a}$ for $j \in C_a$, we find
\begin{equation}\label{eq:central1}
  S_{r, j}=d_r d_j\theta_{C_a}  \in \mbbZ,
\end{equation}
with $j \in C_a$ and $r \in C_1$.

Suppose $C_b \subset \tilde I$. For $j \in C_b$, we have $S_{j, C_a} = S_{c_b, C_a}$.
For any $j' \in C_a$, we have the twist equation:
$$
p_+ S_{j,j'} =   \theta_j \theta_{j'}\(  x  \sum_{r=0}^{l-1} S_{j,1+r}S_{j',1+r} \s^{2r}(\z)  +  \sum_{i \ne 1} \sum_{r \in C_i} S_{jr}  S_{j'r}\theta_{C_i}\)\,.
$$
Therefore,
\begin{eqnarray*}
  &&\sum_{j \in C_b}\sum_{j' \in C_a} p_+ S_{j,j'}  =   d_{C_a}   l_b p_+ S_{c_b ,C_{a}} \\
  &=&
   \theta_{C_b} \theta_{C_a}\( x d_{C_a} d_{C_b}\sum_{r=0}^{l-1} S_{1+r, C_b}S_{1+r, C_a} \s^{2r}(\z)  +  d_{C_a}d_{C_b}\sum_{i\ne 1} \sum_{r \in C_i} S_{r, C_b}  S_{r, C_a}\theta_{C_i}\)\\
 &=&
   \theta_{C_b}  \theta_{C_a}\( x d_{C_a} d_{C_b} S_{1, C_b}S_{1, C_a} \sum_{r=0}^{l-1}\s^{2r}(\z)  +  d_{C_a}d_{C_b}\sum_{i\ne 1} \sum_{r \in C_i} S_{r, C_b}  S_{r, C_a}\theta_{C_i}\)\\
   &=&
    x d_{C_a} d_{C_b} l_a l_b d_1^2  \sum_{r=0}^{l-1}\s^{2r}(\z)  +  \theta_{C_b}  \theta_{C_a} d_{C_a}d_{C_b}\sum_{i\ne 1} \sum_{r \in C_i} S_{r, C_b}  S_{r, C_a}\theta_{C_i}\,.
\end{eqnarray*}
Since $S_{c_b ,C_{a}}$ and $\theta_{C_b}  \theta_{C_a}d_{C_a}d_{C_b}\sum_{i\ne 1} \sum_{r \in C_i} S_{r, C_b}  S_{r, C_a}\theta_{C_i}$ are fixed by $\s$, we find
$$
 2  d_{C_a}  l_b p_+ S_{c_b ,C_{a}} =
x d_{C_a} d_{C_b} l_a l_b d_1^2 \sum_{j=0}^{l-1} \(\s^{2j}(\z) - \s^{2j+1}(\z)\)= 2p_+d_{C_a} d_{C_b} l_a l_b\,.
$$
Hence, $S_{c_b, C_a} = l_a d_{C_b}$. By the same argument as above, $S_{j,j'} = d_{C_a} d_{C_b} = d_j d_{j'}$ for $j \in C_b$ and $j' \in C_a$. For any $j,j' \in \tilde I$, there exist $C_a, C_b \subset \tilde I$ such that $j' \in C_a$ and $j \in C_b$. Hence we have
\begin{equation} \label{eq:central2}
 S_{j,j'}  = d_j d_{j'}, \text{ for all } j, j' \in \tilde I\,.
\end{equation}

By summing \eqref{eq:0.3} over all $j \in C_b$, we obtain
\begin{eqnarray*}
  l_b p_+ d_{C_b} \theta_{C_b} & = &  d_1 d_{C_b} x \sum_{r=0}^{l-1} S_{r, C_b} \s^{2r}(\z) +  \sum_{j \in C_b} \sum_{i\ne 1} d^2_{C_i}\theta_{C_i} S_{j, C_i} \\
  & = &  d_1 d_{C_b} x S_{1, C_b} \sum_{r=0}^{l-1}  \s^{2r}(\z) +  l_b d_{C_b}\sum_{C_i \subset \tilde I} d^2_{C_i}\theta_{C_i} l_i  +  \sum_{j \in C_b} \sum_{C_i \subset I^c } d^2_{C_i}\theta_{C_i} S_{j, C_i} \\
  & = & l_b d_1^2 d_{C_b} \theta_{C_b} x  \sum_{r=0}^{l-1}  \s^{2r}(\z) +    l_b d_{C_b}\sum_{C_i \subset \tilde I} d^2_{C_i}\theta_{C_i} l_i  +  \sum_{j \in C_b} \sum_{C_i \subset I^c } d^2_{C_i}\theta_{C_i} S_{j, C_i}
\end{eqnarray*}
or
\begin{equation} \label{eq:0.7}
  p_+ = d_1^2  x  \sum_{r=0}^{l-1}  \s^{2r}(\z) +    \theta_{C_b}\sum_{C_i \subset \tilde I} d^2_{C_i}\theta_{C_i} l_i  +  \frac{\theta_{C_b}}{l_b d_{C_b}} \sum_{C_i \subset I^c \atop {j \in C_b} } d^2_{C_i}\theta_{C_i} S_{j, C_i}
\end{equation}

(ii) If $\mcC$ is integral, then $I= \Irr\(\mcC\)$ and so
$$
 p_+   =   d_1^2  x  \sum_{r=0}^{l-1}  \s^{2r}(\z)  +  \theta_{C_b}\sum_{i \ne 1} d^2_{C_i}\theta_{C_i} l_i\,.
$$
Since $\sum_{i \ne 1} d^2_{C_i}\theta_{C_i} l_i \ne 0$ (cf. \eqref{eq:00}), by comparing this equation with
\eqref{eq:0.4}, we have $\theta_{C_b}=1$ for $C_b \in \tilde I$. It follows from \eqref{eq:central1} that $S_{r,j} = d_r d_j$ for
$r \in C_1$. Therefore, if $j \not\in C_1$, $S_{i,j} = d_i d_j$ for all $i \in
\Irr\(\mcC\)$. This forces $j=0$ and so $\Irr\(\mcC\) = \{0, \dots l\}$. By (i) we have
$$
\(\frac{l}{p-1}\)^2 d_1^4 p = 1+ l d_1^2\,.
$$
The equation has no integral solution for $d_1$ if $l = \frac{p-1}{2}$. For $l=p-1$, the only integer solution is $d_1 =1$. Thus, $\mcC$ is pointed of rank $p$, and the proof of (ii) is completed.

(iii) Now, we assume  $\mcC$ is strictly weakly integral.  Then $I^c \ne \emptyset$ and so $C_0$ must have length $> 1$. By Lemma \ref{l:tech2}, $C_0 = (0, g)$, $\theta_g=1$ and $g\in G(\mcC)$ is of order $2$,  where $g = \hs(0)$. Let $H=\{0,g\}$ and $\mcC(H)$ the fusion category generated by $V_0$ and $V_g$. Then $\dim \mcC(H) =  2$ and $\mcC(H)$ is a subcategory of the centralizer ${\langle I \rangle}'$ by \eqref{eq:central1} and \eqref{eq:central2}, where $\langle I \rangle$ is the subcategory generated by $V_i,i\in I$. We proceed to show that the centralizer ${\langle I \rangle}'=\mcC(H)$ by showing that $\dim {\langle I \rangle}' =2$.

Since $\Gal(\mcC)$ is cyclic, $\mbK_\mcC$ has exactly one quadratic extension over $\mbQ$. Therefore, there exists a square-free positive $M$ such that $d_j=\a_j \sqrt{M}$, for all $j \in I^c$ and positive integer $\a_j$. Let $\mcC_0=\langle I\rangle$ and let $\mcC_1$ be the subcategory of $\mcC$ generated by $V_j$ for $j \in I^c$. Then,
$\mcC = \mcC_0 \oplus \mcC_1$ defines a faithful $\mbZ_2$-grading on $\mcC$. Thus, $\dim \langle I \rangle = \dim \mcC/2$. By \cite{M2}, $\dim {\langle I\rangle}' = 2$.

Next, we would like to prove that $\tilde I = H$.  Now we consider the twist equation \eqref{twist equation} for $S_{g,j}$ with $j \in \tilde I$:
  \begin{eqnarray}
 p_+ d_j \ol\theta_j  & = &  d_1  \sum_{r=1}^l  S_{j,r}  \theta_r+  \sum_{C_i \subset \tilde I}  d_{C_i}\theta_{C_i} \sum_{r\in C_i} S_{j,r} -  \sum_{C_i \subset I^c}  d_{C_i}\theta_{C_i} \sum_{r\in C_i} S_{j,r}\,.
\end{eqnarray}
By comparing this equation with \eqref{eq:quadratic0}, we find
\begin{equation}
  \sum_{C_i \subset I^c}  d^2_{C_i}\theta_{C_i} S_{j,C_i} = \sum_{C_i \subset I^c}  d_{C_i}\theta_{C_i} \sum_{r\in C_i} S_{j,r} = 0 \text{ for all } j \in \tilde I.
\end{equation}
By setting $j=0$ in \eqref{eq:0.3}, we obtain $\sum_{C_i \subset I^c}  d^2_{C_i}\theta_{C_i} l_i =0$. Therefore, by comparing \eqref{eq:0.4} and \eqref{eq:0.7}, we have $\theta_j =1$, for all $j \in \tilde I$. This implies $S_{ij}=d_i d_j$, for all $i \in I$ and $j \in \tilde I$. Hence $V_j \in C_\mcC(\langle I \rangle)$, for $j \in \tilde I$. Therefore, $\tilde I = H$ and so $I = H \cup \{1, \dots, l\}$.

By (i), we now have an equation on the dimension of $\langle I \rangle$:
$$
\(\frac{l}{p-1}\)^2 \frac{d_1^4 p}{2} = \frac{\dim \mcC}{2} = 2 + l d_1^2\,.
$$
This equation has no integral solution  for $d_1$ if $l =p-1$. This implies $l=\frac{p-1}{2}$, and so $d_1 =2$ is the only integer solution. Thus, $\dim \mcC = 4p$. It follows from Theorem \ref{t:4m} that $\mcC$ is a $\mbZ_2$-equivariantization of a Tambara-Yamagami category associated to the group $\mbZ_p$. This completes the proof of (iii).
\end{proof}

\begin{lemma}\label{l:tech3}
Suppose $\mcC$ is a weakly integral modular category such that $v_p(\FSexp(\mcC))=1$, for some odd prime $p$. Let $\s$ be a $p$-automorphism of $\mcC$. If $\s$ has exactly two $p$-support cycles and they both have length $k=\frac{p-1}{2}$, say $C_1= (1\,\dots\, k)$ and $C_2=(k+1\, \dots\, 2k)$, then
either
  \begin{enumerate}
\item $v_p(\dim \mcC)$ is even and $d_1 =\cdots=d_{p-1}$, or
\item $v_p(\dim \mcC)$ is odd and
$\dim \mcC = \frac{1}{4}\(d_1^4+d_{2k}^4+ \e d_1^2 d_{2k}^2\)p$, for some $\e \in \{0,1,-1\}$.
\end{enumerate}
\end{lemma}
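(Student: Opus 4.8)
The plan is to run the quadratic Gauss-sum analysis of Theorem~\ref{t:gal}, but now with two varying support cycles instead of one, and to let the parity of $v_p(\dim\mcC)$ decide which ``half'' of the Gauss sum survives. Using the canonical pivotal structure we have $d_i>0$ and $p_+=\sqrt{\a}\,D$ with $D=\sqrt{\dim\mcC}$ and $\sqrt{\a}$ a $16$th root of unity. Write $N=pq$ with $p\nmid q$, $v_p(N)=1$, $k=\tfrac{p-1}{2}$, and decompose $\hs=C_0C_1C_2C_3\cdots$, where $C_0\ni 0$, the two $p$-support cycles are $C_1,C_2$, and the remaining $C_i$ are not $p$-support cycles, so by Lemma~\ref{l:tech2} their twists $\theta_{C_i}\in\mbbZ[\zeta_q]$ are constant along the cycle and fixed by $\s$. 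On $C_1$ (resp.\ $C_2$) Lemma~\ref{l:tech2} gives $\theta_{1+r}=x_1\s^{2r}(\z_1)$ (resp.\ $\theta_{k+1+r}=x_2\s^{2r}(\z_2)$) with $\ord(\z_i)=p$ and $x_i^q=1$, and
\[
\sum_{r=0}^{k-1}\s^{2r}(\z_i)=\frac{-1+\epsilon_i\,\varepsilon\sqrt{p}}{2},\qquad \varepsilon^2=\left(\tfrac{-1}{p}\right),\ \epsilon_i=\pm1,
\]
where $\epsilon_i$ records the quadratic-residue class of $\z_i$. Evaluating $p_+=\sum_i d_i^2\theta_i$ cycle by cycle yields $p_+=A+\tfrac{\varepsilon\sqrt{p}}{2}B$ with
\[
A=\sum_{i\neq 1,2}l_i\,d_{C_i}^2\,\theta_{C_i}-\tfrac12\left(d_{C_1}^2 x_1+d_{C_2}^2 x_2\right),\qquad B=\epsilon_1 x_1 d_{C_1}^2+\epsilon_2 x_2 d_{C_2}^2,
\]
and $A,B\in\mbbZ[\zeta_q]$ are fixed by $\s$.

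The key step is a parity dichotomy. Since $\s$ fixes every root of unity of order prime to $p$ and restricts to a generator of $\Gal(\mbbQ_p/\mbbQ)$, which acts as $-1$ on its quadratic subfield $\mbbQ(\varepsilon\sqrt{p})$, we have $\s(\varepsilon\sqrt{p})=-\varepsilon\sqrt{p}$ and hence $\s(p_+)=A-\tfrac{\varepsilon\sqrt{p}}{2}B$. On the other hand $\s(\sqrt{\a})=\sqrt{\a}$, and writing $\dim\mcC=p^{v}s$ with $p\nmid s$ one has $\s(\sqrt{s})=\sqrt{s}$ (as $\sqrt{s}$ lies in the prime-to-$p$ cyclotomic subfield of $\mbbQ_{12N}$ fixed by $\s$) and $\s(\sqrt{p^{v}})=(-1)^{v}\sqrt{p^{v}}$, so $\s(D)=(-1)^{v}D$ and $\s(p_+)=(-1)^{v_p(\dim\mcC)}p_+$. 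Comparing the two expressions for $\s(p_+)$ forces $B=0$ when $v_p(\dim\mcC)$ is even and $A=0$ when $v_p(\dim\mcC)$ is odd.

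If $v_p(\dim\mcC)$ is even then $B=0$, i.e.\ $\epsilon_1 x_1 d_{C_1}^2=-\epsilon_2 x_2 d_{C_2}^2$; taking absolute values gives $d_{C_1}=d_{C_2}$, and since $d$ is constant on each cycle and $C_1\sqcup C_2=\{1,\dots,p-1\}$ this yields $d_1=\cdots=d_{p-1}$, which is (i). If $v_p(\dim\mcC)$ is odd then $A=0$, so $p_+=\tfrac{\varepsilon\sqrt{p}}{2}B$ and, writing $d_1=d_{C_1}$, $d_{2k}=d_{C_2}$,
\[
\dim\mcC=|p_+|^2=\tfrac{p}{4}|B|^2=\tfrac14\left(d_1^4+d_{2k}^4+\e\, d_1^2 d_{2k}^2\right)p,\qquad \e=2\epsilon_1\epsilon_2\,\Re(x_1\overline{x_2}).
\]
Because $\dim\mcC\in\mbbZ$ and $d_1^2,d_{2k}^2\in\mbbZ$, $\e$ is rational; being $\epsilon_1\epsilon_2(x_1\overline{x_2}+\overline{x_1}x_2)$ it is also an algebraic integer, hence $\e\in\mbbZ$, and clearly $|\e|\le 2$.

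The main obstacle is to exclude $\e=\pm2$, which is exactly the case $x_1=\pm x_2$, i.e.\ the twists on $C_1$ and $C_2$ agree (up to a global sign) as multisets. I expect to rule this out by feeding the off-diagonal twist equations for $S_{ij}$ with $i\in C_1$, $j\in C_2$ back into the argument, as in the derivation of \eqref{eq:central1}--\eqref{eq:central2}: if $x_1=\pm x_2$ one should be able to show $S_{ij}=\pm d_i d_j$ for all such $i,j$, so that $C_1$ and $C_2$ centralize each other, which then collapses them into a single $\s$-orbit (contradicting the hypothesis of two distinct maximal support cycles) or, in the minus case with $d_{C_1}=d_{C_2}$, forces $\dim\mcC=0$. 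Making this centralization argument rigorous \emph{without} assuming $\Gal(\mcC)$ is cyclic is the delicate point; by contrast the Gauss-sum bookkeeping above is routine once the two-cycle analogue of \eqref{eq:0.3} is set up.
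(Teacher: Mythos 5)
Your Gauss-sum computation is, up to notation, the paper's own argument: the paper likewise expands $p_+=\sum_i d_i^2\theta_i$ cycle by cycle, writes the contribution of the two $p$-support cycles as $-\tfrac12(d_1^2x+d_{2k}^2y)+\tfrac{\varepsilon\sqrt p}{2}\bigl(\pm d_1^2x\pm d_{2k}^2y\bigr)$ with the non-support cycles contributing an element of $\mbbQ_q$, and then uses the $\mbbQ_q$-linear independence of $\{1,\sqrt p\}$ together with the parity of $v_p(\dimC)$ (equivalently, your identity $\s(p_+)=(-1)^{v_p(\dimC)}p_+$) to kill exactly one of the two pieces. The even case then gives $d_1^2=d_{2k}^2$ and hence $d_1=\cdots=d_{p-1}$, and the odd case gives $\dimC=\tfrac p4|B|^2=\tfrac14\bigl(d_1^4+d_{2k}^4+\e\,d_1^2d_{2k}^2\bigr)p$, just as you have it. Your justification that $\s(\sqrt{s})=\sqrt{s}$ is no less (and no more) detailed than the paper's bare assertion that $p_+\in\mbbQ_q$ when $v_p(\dimC)$ is even, so I would not count that against you.

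The genuine gap is the one you flag yourself: you only obtain $\e\in\{0,\pm1,\pm2\}$, the lemma asserts $\e\in\{0,\pm1\}$, and your plan for excluding $\e=\pm2$ is left as a speculative sketch. This is not a cosmetic loose end: the exclusion is used downstream, e.g.\ the mod-$4$ contradiction drawn from \eqref{eq:*2+2} in the rank-$7$ integral theorem can fail if $\e\,d_1^2d_4^2$ is allowed to be $\equiv 2\bmod 4$. The paper does not take your proposed route at all. It simply observes that $\tfrac yx+\tfrac xy$ is a rational algebraic integer of absolute value at most $2$ and asserts that $y/x$ is therefore a fourth or sixth root of unity, reading off $\e\in\{0,1,-1\}$ — that is, it excludes $y/x=\pm1$ by fiat at this step rather than by any structural argument (and, to be fair to you, that one line is itself quite terse about why $y/x\ne\pm1$). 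By contrast, the centralization machinery you sketch — proving $S_{ij}=\pm d_id_j$ across $C_1$ and $C_2$ and invoking M\"uger's theorem to collapse the two cycles — is not set up anywhere in the paper in this generality, depends on the cyclicity of $\Gal(\mcC)$ in the places where analogous identities such as \eqref{eq:central1} and \eqref{eq:central2} are actually derived, and rests on the unfounded expectation that two distinct disjoint cycles of $\hs$ cannot carry proportional twists. As written, your proof establishes the dichotomy and the dimension formula but not the stated constraint on $\e$.
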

\begin{proof}
Suppose $\hs= (1\,\dots\, k)(k+1\, \dots\, 2k) \Prod_{i} Z_i$, for some disjoint
cycles $Z_i$. Let $J = \Irr\(\mcC\)\setminus \{1,\dots, 2k\}$. Since none of the elements of $J$ are contained in any support cycles of $\s$, it follows that $\s(\theta_j) = \theta_j$, for $j \in J$.
Now we assume $\FSexp(\mcC)=pq$, $\theta_1= x \z$ and $\theta_{2k}= y \z^\ell$, for some $x,y ,\z\in \mbC$ and some positive integer $\ell < p$ such that $x^q=y^q=1$ and $\ord(\z)=p$. Define $u = \sum_{i=1}^{k-1} \s^{2i}(\z)=\frac{-1\pm \varepsilon \sqrt{p}}{2}$, $u' = \sum_{i=1}^{k-1} \s^{2i}(\z^\ell)=\frac{-1\pm \(\frac{\ell}{p}\)\varepsilon \sqrt{p}}{2}$, where $\varepsilon =\sqrt{\(\frac{-1}{p}\)}$. Then
\begin{eqnarray*}
  \sqrt{\a} D = p_+  & = & d^2_1 x u + d^2_{2k }y u' +\sum_{j\in J} d_j^2 \theta_j \\
  & = & \frac{\pm 1}{2} (d^2_1 x  +   \(\frac{\ell}{p}\) d^2_{2k}y) \varepsilon\sqrt{p} -\frac{1}{2} (d^2_1 x  +  d^2_{2k}y)+\sum_{j\in J} d_j^2 \theta_j\,.
\end{eqnarray*}
Note that $ \sum_{j\in J} d_j^2 \theta_j \in \mbbQ_{q}$. If $v_p(\dim \mcC)$ is even, then $p_+ \in \mbbQ_{q}$. The
$\mbbQ_{q}$-linear independence of $\{1,\sqrt{p}\}$ implies  $d^2_1 x  +  \(\frac{\ell}{p}\)   d^2_{2k}y=0$, and so $d^2_1   =  d^2_{2k}$. On the hand, If $v_p(\dim \mcC)$ is odd, then $p_+\not\in \mbbQ_q$, and so
$$
\sqrt{\a} D = \frac{\pm 1}{2} (d^2_1 x  +   \(\frac{\ell}{p}\) d^2_{p-1}y) \varepsilon\sqrt{p} \,.
$$
Hence, by considering the product of conjuagates, we have
$$
 \frac{4 \dim \mcC}{p} = \(d_1^2  + \(\frac{\ell}{p}\) d^2_{2k} \frac{y}{x}\) \(d_1^2  + \(\frac{\ell}{p}\) d^2_{2k} \frac{y}{x}\) =
 d_1^4+d_{2k}^4+\(\frac{\ell}{p}\) d_1^2 d_{2k}^2\(\frac{y}{x}+\frac{x}{y}\)\,.
 $$
 This implies $y/x$ is either a 4-th root or a 6-th root of unity and so we obtain
 $$
 \frac{4 \dim \mcC}{p} =
 d_1^4+d_{2k}^4+ \e d_1^2 d_{2k}^2\, ,
 $$
 where $\e \in\{0,1,-1\}$.
\end{proof}

\section{Weakly integral modular categories of rank 6 and 7}
We now apply the results of previous sections to the classification of weakly integral modular categories of ranks $6$ and $7$.

\subsection{Weakly integral modular categories of rank 6}
\begin{theorem}
A weakly integral rank $6$ modular category $\mcC$ is equivalent (as balanced braided fusion category) to one of the following:
\begin{enumerate}
\item[(a)] $\mcI\boxtimes\mcP$, with $\mcI$ an Ising modular category and $\mcP$ a cyclic modular category of rank $2$,

\item[(b)] $TY(\mbZ_5,\chi,\nu)^{\mbZ_2}$, or
\item[(c)] a cyclic modular category of rank $6$.
\end{enumerate}
\end{theorem}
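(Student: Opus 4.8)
The plan is to split on whether $\mcC$ is integral, and in the strictly weakly integral case on whether $\mcC$ contains a simple object of dimension $\sqrt{2}$. Throughout I take the canonical spherical structure, so $d_i=\FPdim(V_i)>0$.

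Suppose first that $\mcC$ is integral; I would show it is pointed, whence $G(\mcC)$ is an abelian group of order $6$, necessarily $\mbZ_6$, so $\mcC$ is a cyclic modular category of rank $6$, giving (c). For pointedness, recall that in a modular category every component of the universal $G(\mcC)$-grading has dimension $\dim\mcC_{ad}$ and contains its invertible generator. If $\mcC$ were not pointed then $\dim\mcC_{ad}>1$, so each of the $|G(\mcC)|$ components must contain at least two simple objects, forcing $2|G(\mcC)|\le 6$ and hence $|G(\mcC)|\le 3$. The residual possibilities $|G(\mcC)|\in\{1,2,3\}$ are then eliminated by combining the equal-dimension property of the components, the $G(\mcC)$-action on $\Irr(\mcC)$ (which matches dimensions across components), and the divisibility $\FPdim(X)^2\mid\FPdim(\mcC)$; the finitely many Diophantine survivors are ruled out using Cauchy's theorem together with the support-cycle arithmetic of Section~\ref{s:support}.

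Now suppose $\mcC$ is strictly weakly integral, so it carries a faithful grading by an elementary abelian $2$-group $E$ with trivial component $\mcC_{int}$, and let $s$ be the number of non-integral simple classes. By Lemma~\ref{1-non-integral-object-ising} we cannot have $s=1$ (that would force $\mcC$ to be Ising of rank $3$), so $s\ge 2$. Comparing the equal component dimensions $\dim\mcC_\chi=\dim\mcC/|E|$ across the distinct square-free parts occurring among the non-integral dimensions shows $E\cong\mbZ_2$: two different square-free parts would impose incompatible congruences (for example a component of objects of dimension in $\sqrt{3}\,\mbZ$ would force $3\mid 1+d^2$ for an integral dimension $d$, which is impossible since squares are $0,1\bmod 3$). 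Hence all non-integral simples share one square-free part $n>1$, lie in the nontrivial component $\mcC_1$, and $\dim\mcC_0=\dim\mcC_1=\tfrac12\dim\mcC$. If moreover $\mcC$ has a simple $\sigma$ with $\FPdim(\sigma)=\sqrt2$, then $\sigma\otimes\sigma^*\cong\one\oplus g$ for a nontrivial invertible $g$ and $\{\one,g,\sigma\}$ generates an Ising fusion subcategory $\mcI$; I would check $\mcI$ is non-degenerate (its M\"uger center cannot contain $\sigma$, as symmetric categories have integral dimensions, and transparency of $g$ is excluded via the twist and balancing equations exactly as in Lemma~\ref{1-non-integral-object-ising}), so by M\"uger's theorem $\mcC\cong\mcI\boxtimes\mcI'$ with $\mcI'$ modular of rank $6/3=2$. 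Since a rank $2$ weakly integral modular category is pointed, $\mcI'$ is a cyclic modular category $\mcP$ of rank $2$, giving (a).

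If instead $\mcC$ has no object of dimension $\sqrt2$, then $n>2$ has an odd prime factor $p\mid\FSexp(\mcC)$, and I would run the support-cycle machinery: a $p$-automorphism $\s$ acts on the six-element set $\Irr(\mcC)$, its $p$-support cycles consist exactly of the non-integral simples by Lemma~\ref{l:tech2}, and their lengths are divisors of $\varphi(p^a)$ that must fit among the available objects. Combined with $\dim\mcC_0=\dim\mcC_1$ and Lemma~\ref{l:tech3}, this forces $s=2$ with both non-integral objects of dimension $\sqrt5$ and $\dim\mcC=20=4\cdot 5$; applying Theorem~\ref{t:4m} (its outcomes (a) and (c) being excluded as $\mcC$ is non-integral with no $\sqrt2$ object) together with the rank identity $\tfrac{N+7}{2}\cdot(\text{rank of the cyclic factor})=6$ leaves only $SO(5)_2$, i.e. $\mcC\cong TY(\mbZ_5,\chi,\nu)^{\mbZ_2}$, giving (b). Equivalently, once $\Gal(\mcC)=\langle\hs\rangle$ and a single $p$-support cycle are established, Theorem~\ref{t:gal}(iii) yields $p=5$ directly. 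The main obstacle is precisely this last step: pinning down the cycle structure of the $\Gal(\mcC)$-action tightly enough — a single $p$-support cycle, with $\Gal(\mcC)$ cyclic — so as to exclude the spurious square-free parts $n\in\{3,6,7,10,\dots\}$ and the counts $s\ge 3$ and thereby force $p=5$; verifying non-degeneracy of the Ising subcategory in the $\sqrt2$ case is a secondary technical point.
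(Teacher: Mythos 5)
Your case division (integral; strictly weakly integral with a $\sqrt2$-object; without one) is workable and roughly parallels the paper's, but there is a genuine gap exactly where you flag ``the main obstacle'': you never actually force $\dim\mcC=20$ in the no-$\sqrt2$ case, and the support-cycle route you gesture at is not how this is done. The paper's argument is much more elementary: since $\mcC$ is strictly weakly integral with $|U(\mcC)|=2$, the trivial component $\mcC_{ad}=\mcC_e$ is an \emph{integral premodular} category containing exactly two invertible classes, of rank $3$ or $4$ (rank $2$ is impossible by a dimension count against the at least two non-integral simples in the other component). Ostrik's classification of rank $3$ premodular categories \cite{O4} and the rank $4$ classification \cite{B2} then give $\dim\mcC_{ad}\in\{6,10\}$, hence $\dim\mcC\in\{12,20\}$, and Theorem \ref{t:4m} plus the rank-$6$ constraint leaves only $TY(\mbZ_5,\chi,\nu)^{\mbZ_2}$. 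Without some such input, the spurious square-free parts $n=3,6,7,\dots$ are not excluded; the support-cycle machinery is reserved in the paper for rank $7$, where it requires substantial additional work (cf. Lemma \ref{l:bounded_p}), and your specific claim that the $p$-support cycles ``consist exactly of the non-integral simples'' is not what Lemma \ref{l:tech2} says (the criterion is $\ord_p(\theta_j)$, not integrality of $d_j$).

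Two further points. In the integral case the paper simply cites \cite[Theorem 4.2]{BR1}; your attempted re-proof rests on the claim that every component of the universal grading ``contains its invertible generator,'' which is false for modular categories in general (e.g.\ in $\mcZ(\Rep(S_3))$ both invertibles lie in $\mcC_{ad}$ and the nontrivial component consists of the two $3$-dimensional objects), so the bound $2|G(\mcC)|\le 6$ is not justified as stated and the subsequent elimination of ``Diophantine survivors'' is too vague to assess. By contrast, in the $\sqrt2$ case your direct construction of a non-degenerate Ising subcategory followed by M\"uger's decomposition is a legitimate alternative to the paper's appeal to \cite[Theorem 5.5]{Nat1} (non-degeneracy of a braided Ising category is \cite[Cor. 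B.12]{DGNO1}, which the paper itself uses in the rank $7$ analysis), and that portion of your argument is sound.
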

\begin{proof}

If $\mcC$ is integral, it follows from \cite[Theorem 4.2]{BR1} that $\mcC$ is in fact pointed (alternative (d)). Therefore, we may assume that $\mcC$ is strictly weakly integral. Moreover, by Lemma \ref{1-non-integral-object-ising}, there are at least two objects with non-integral dimensions. In addition, \cite[Theorem 3.10]{GN} implies that $\mcC$ is faithfully graded by an elementary $2$-group, so that there are at least two invertible objects.
Thus, the universal grading group $U(\mcC)$ has order $2$ or $4$.

If $|U(\mcC)|=4$ then the two simple objects with non-integral dimension must have dimension $\sqrt{2}$.  In this case $\mcC$ is equivalent to $\mcI\boxtimes\mcP$, with $\mcP$ pointed of rank $2$ by \cite[Theorem 5.5]{Nat1}.

If $|U(\mcC)|=2$ it is enough to show that $\dim\mcC\in\{12,20\}$ by Theorem \ref{t:4m}. Note that $\mcC_{ad}=\mcC_{e}$ is an integral premodular category containing exactly two isomorphism classes of invertible objects. Since there are at least two simple classes of objects of non-integral dimension, the rank of $\mcC_{ad}$ is $2,3$ or $4$. The first case is clearly impossible, since there are two pointed objects in $\mcC_{ad}$. If $\mcC_{ad}$ has rank $3$, $\dim\mcC=2\dim\mcC_{ad}=12$ by Ostrik's classification of rank $3$ premodular categories \cite{O4}. If $\rank(\mcC_{ad}) = 4$ we have $\dim\mcC=2\dim\mcC_{ad}=20$, by the classification of rank 4 premodular categories \cite{B2}.
\end{proof}

\subsection{Weakly Integral Rank 7}

The goal of this subsection is to give a classification, up to Grothendieck equivalence, of weakly integral rank $7$ modular categories.  Such categories obviously cannot be Deligne products of non-trivial categories.
Applying the results of subsection \ref{s:grading} we have:
\begin{prop} \label{p:bosonic_obj}
Suppose $\mcC$ is a (not necessarily weakly integral) modular category of rank $7$. If $\mcC$ admits a Tannakian subcategory equivalent to $\Rep(H)$, for some nontrivial subgroup $H$ of $G(\mcC)$, then $\mcC$ is strictly weakly integral of dimension $28$.
\end{prop}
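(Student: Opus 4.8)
The plan is to exploit the faithful $\hat H$-grading of $\mcC$ supplied by the self-centralizing (indeed Tannakian) subgroup $H$, and to whittle down the possibilities for $|H|$ using the rank constraint. Writing $h=|H|$, this grading has $h$ components of equal dimension $\dim\mcC/h$, and since every invertible object of $H$ lies in the trivial component, $\mcC_e$ has at least $h$ simple objects. As $\mcC$ has rank $7$, this forces $h+(h-1)\le 7$, i.e.\ $h\le 4$. I would then eliminate $h=3$ and $h=4$. For $h=3$ (so $H\cong\mbZ_3$), Lemma~\ref{l:grading1}(i) makes each of the two nontrivial components have a number of simple objects divisible by $3$, giving rank $\ge 9$, a contradiction. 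For $h=4$ the counting forces $\mcC_e$ to consist exactly of $H$ and each nontrivial component to be a singleton; but a singleton nontrivial component is an $H$-fixed object in some $\mcC_\chi$ with $\chi\neq 1$, contradicting Lemma~\ref{l:grading1}(i). Hence $H\cong\mbZ_2=\{\mathbf 1,z\}$ with $z$ a nontrivial boson ($d_z=\theta_z=1$).

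Next I would read off the resulting $\mbZ_2$-grading $\mcC=\mcC_e\oplus\mcC_g$, identifying the trivial component $\mcC_e=\langle z\rangle'$ as the centralizer of $z$, since $j\in\mcC_e$ exactly when $S_{zj}=d_j$. Lemma~\ref{l:grading1}(i) shows that $\mcC_g$ has an even number of simple objects, none of them $z$-fixed, so the rank splits are $(5,2)$ or $(3,4)$. I expect $(5,2)$ to be the genuine case: there $\mcC_g=\{Y,z\otimes Y\}$ is a single $H$-orbit, so Lemma~\ref{l:grading3} forces $\mcC_e$ to be integral; since $Y\otimes Y^*\in\mcC_e$ we get $d_Y^2=\FPdim(Y\otimes Y^*)\in\mbZ$, whence $\mcC$ is weakly integral, and it cannot be integral (an integral rank-$7$ modular category is pointed $\cong\mbZ_7$, which has no order-$2$ boson). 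Thus $\mcC$ is strictly weakly integral with $\dim\mcC=2\dim\mcC_g=4d_Y^2$.

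To pin the dimension I would de-equivariantize $\mcC_e$ by the central copy of $\Rep(\mbZ_2)=\langle z\rangle$, producing a modular category $\tilde\mcC$ with $\dim\tilde\mcC=\dim\mcC/4=d_Y^2$. In the $(5,2)$ split the three remaining simple objects of $\mcC_e$ are $z$-fixed of even dimension (Lemma~\ref{l:grading3}), so each splits into two simple objects and $\tilde\mcC$ is an integral modular category of rank $7$; being integral of small rank it is pointed, hence $\cong\mbZ_7$ of dimension $7$. This yields $d_Y^2=7$, $\dim\mcC=28$, and simple dimensions $1,1,2,2,2,\sqrt7,\sqrt7$. Theorem~\ref{t:4m} then identifies $\mcC$, a strictly weakly integral modular category of dimension $4\cdot 7$ with $7$ square-free, as $TY(\mbZ_7,\chi,\nu)^{\mbZ_2}\boxtimes\Vec$, giving exactly the claimed conclusion. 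The split $(3,4)$ I would rule out by the same de-equivariantization: it would force $\tilde\mcC$ to be a rank-$3$ modular category with a single invertible object and two simples of equal dimension (or, in the degenerate dimension-$12$ subcase, a category already excluded by Theorem~\ref{t:4m}), and no such rank-$3$ modular category exists.

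Finally, the main obstacle. The grading reduction to $H\cong\mbZ_2$ is robust, resting only on the dimension count and the no-singleton-orbit statement of Lemma~\ref{l:grading1}(i). The delicate point is everything after that: a priori $\mcC$ need not be weakly integral, and extracting $\dim\mcC=28$ requires controlling the de-equivariantization $\tilde\mcC$ and invoking the classification of small modular categories, in particular that integral modular categories of rank at most $7$ are pointed. Excluding the $(3,4)$ split and ruling out larger dimensions in the $(5,2)$ split is where the real work lies; everything else is bookkeeping with the balancing equation and the two commuting group actions of $\Gal(\mcC)$ and $G(\mcC)$.
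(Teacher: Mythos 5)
Your reduction to $H\cong\mbZ_2$ and the dichotomy between the rank splits $(5,2)$ and $(3,4)$ matches the paper exactly (same pigeonhole bound, same use of Lemma~\ref{l:grading1}(i)). After that you diverge: the paper never de-equivariantizes here, but instead works directly with the dimension equation $2d_4^2=2+d_1^2+4n_2^2+d_3^2$ and a chain of congruences and divisibility constraints ($d_i^2\mid\dim\mcC$, reduction mod $8$) to force $(d_1,d_2,d_3,d_4^2)=(2,2,2,7)$. Your route could in principle work, but as written it has two genuine gaps.

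First, in the $(5,2)$ split you assert that \emph{all three} non-invertible simples of $\mcC_e$ are $z$-fixed of even dimension, citing Lemma~\ref{l:grading3}. That lemma only says: $\mcC_e$ is integral, and \emph{if} $k\in\Irr\(\mcC_e\)$ is fixed by $H$ \emph{then} $2\mid d_k$. Since $\mbZ_2$ acts on a $3$-element set, at least one of $V_1,V_2,V_3$ is fixed, but the other two may form a free orbit of odd equal dimension. This is not a vacuous worry: it is exactly the case $(d_1,d_3,d_4^2)\equiv(1,1,0)\bmod 2$ that occupies the middle third of the paper's proof, where \eqref{eq:dim2} is ground down to $\dim\mcC=16$ and then killed by showing $V_4$ would have to be fixed by the order-$4$ group $G(\mcC)$. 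Your de-equivariantization in that case yields a rank-$4$ (not rank-$7$) integral modular category, so the argument you sketch does not even get off the ground there. A similar issue infects your $(3,4)$ split: Lemma~\ref{l:grading3} does not apply (no component is a single $H$-orbit), so you cannot assume $\mcC_e$ is integral before de-equivariantizing; the paper instead invokes Ostrik's rank-$3$ premodular classification to pin $d_1\in\{\sqrt2,2\}$ and rules out each value.

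Second, and more seriously, your final step in the $(5,2)$ split concludes that the rank-$7$ integral de-equivariantization $\tilde\mcC$ is pointed because ``integral modular categories of rank at most $7$ are pointed.'' For rank $7$ that statement is a main theorem of this paper, proved \emph{after} and \emph{by means of} Proposition~\ref{p:bosonic_obj}, so invoking it here is circular. (Rank $\le 6$ is covered by the earlier literature, but your $\tilde\mcC$ has rank exactly $7$.) The non-circular replacement is precisely the paper's arithmetic: from $d_4^2=1+2n_1^2+2n_2^2+2n_3^2$ one gets $d_4^2\equiv 7\bmod 8$, hence $d_4\notin\mbbZ$, and then $n_i^2\mid d_4^2$ squeezes $7\le d_4^2/\lcm(n_i^2)\le 7$, forcing all $n_i=1$. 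You would need to supply this (or an equivalent self-contained argument) to close the proof.
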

\begin{proof}
Since $\mcC$ has a faithful $\hat{H}$-grading and $H\subset\mcC_e$ by Remark
\ref{H in C_e}, the pigeonhole principle implies that $|H| \le 4$. If $|H|=4$
then $\abs{\Irr(\mcC_\chi)}=1$, for any non-trivial character $\chi$ of $H$, by Remark \ref{H in C_e}. In particular, $H$ fixes all the simple objects not in $\Irr(\mcC_e)$, but this contradicts Lemma \ref{l:grading1}. Thus, $|H|=2$ or $3$. Lemma \ref{l:grading1} implies that if $|H|=3$ then $3 \mid |\Irr(\mcC_\chi)|$, for each non-trivial character $\chi$ of $H$. However, this means $\rank \mcC \ge 9$. Therefore, $|H|=2$.

Suppose $H=\{0,h\}$ and $\hat{H} = \{e, \chi\}$. In particular, $\chi(h) =-1$. In view of Lemma \ref{l:grading1}, $\Irr(\mcC_\chi)$ can only have 1 or 2 $H$-orbits.

We first show that $\Irr(\mcC_\chi)$ must have only one $H$-orbit. Assume the contrary. Then we have $\Irr(\mcC_e) = \{\1, h, V_1\}$ and $\Irr(\mcC_\chi) = \{V_2, V_3, V_4, V_5\}$ with $h \cdot V_2 = V_3$ and $h \cdot V_4 = V_5$. Note that $V_1$ cannot be invertible and so $V_1$ is fixed by $H$. In particular, $d_1^2 - n d_1- 2=0$, and $d_1 \ne \pm 1$.  Moreover, $V_1$ is self-dual.

Since $\mcC_e$ is a non-pointed premodular category with exactly two invertible classes of simple objects, \cite{O4} implies that $\dim(V_1)\in\{\sqrt{2},2\}$.  By \cite[Cor. B.12]{DGNO1} if $\dim(V_1)=\sqrt{2}$ then $\mcC_e$ is modular, which implies $\mcC$ is a Deligne product of two modular categories by \cite[Theorem 4.2]{M2}, a contradiction.  If $\dim(V_1)=2$ then $\dim\mcC=12$, so the conditions of Theorem \ref{t:4m} are satisfied.  Applying \cite[Theorem 4.2]{M2} again, we see that the only alternative is that $\mcC$ is a metaplectic category of dimension $12$.  But such categories have rank $5$, a contradiction.

Therefore, $\Irr(\mcC_\chi)$ consists of a single $H$-orbit so that
$\Irr(\mcC_e) = \{1, h, V_1, V_2, V_3\}$ and $\Irr(\mcC_\chi) = \{V_4, V_5\}$ with $h\cdot V_4 =V_5$. Moreover, by Lemma \ref{l:grading3}, $\mcC_e$ is integral so that $\mcC$ is weakly integral. Then one of $V_1, V_2, V_3$ must be fixed by $H$. We may assume $V_2$ is fixed by $H$. Since the stabilizer of $\Irr(\mcC_\chi)$ is trivial, it follows from Lemma \ref{l:grading3} that $d_2 = 2 n_2$, for some positive integer $n_2$. Moreover, $d_1, d_3 \in \mbbZ_+$. From the dimension equation we find $\dim \mcC = 4 d_4^2$, $n_2^2 \mid d_4^2$ and
\begin{equation}\label{eq:dim1}
2 d_4^2 = 2 + d_1^2 + 4 n_2^2 + d_3^2
\end{equation}
The equation modulo 2 implies $(d_1, d_3, d_4^2) \equiv (1,1,0)$ or $(0,0,1) \mod 2$.

We first show that $(d_1, d_3, d_4^2) \equiv (1,1,0) \mod 2$ is not possible. Assume the contrary. Then $d_1, d_3$ are odd and so $d_1^2 \mid d_4^2$. By Lemma \ref{l:grading3}, $V_1$ is not fixed by $H$ ($|H|=2\nmid d_1$). Therefore, $h \cdot V_1 = V_3$ and hence $d_1=d_3$. Now, \eqref{eq:dim1} becomes
\begin{equation}\label{eq:dim2}
d_4^2 = 1 + d_1^2 + 2 n_2^2
\end{equation}
and hence $d_4^2$ is even. Moreover, $n_2^2$ and $d_1^2$ are relatively prime, and $d_1^2 n_2^2 = \lcm(d_1^2, n_2^2)$. If $d_4^2 \equiv 2 \mod 4$, then $n_2^2 \equiv 0 \mod 4$ and hence $4 \mid d_4^2$, a contradiction. Therefore, $4 \mid d_4^2$, and so $n_2$ must be odd. Now, we find
$$ 4 \mid \frac{d_4^2}{d_1^2 n_2^2} = \frac{1}{d_1^2 n_2^2} +  \frac{1}{n_2^2} +  \frac{2}{d_1^2}\,.$$
This forces $n_2^2 = d_1^2=1$ and hence $d_2^2=d_4^2=4$ and $\dimC=16$. Thus, $G=G(\mcC)$ has order $4$ and the homogeneous component $\mcC_\psi$ has $\dim \mcC_\psi = 4$, for all $\psi \in \hat{G}$. In particular, $|\Irr(\mcC_\psi)|=1$ for any non-trivial character $\psi$ of $\hat{G}$. Therefore, $V_4$ must be fixed by $G$, contradicting $h\cdot V_4=V_5$.

Now, we have $(d_1, d_3, d_4^2) \equiv (0,0,1) \mod 2$, we proceed to show that $\mcC$ is strictly weakly integral of $\dimC=28$. Let $n_i = d_i/2$ for $i =1,2, 3$. Then $n_i^2 \mid d_4^2$ and hence $n_i$ is odd for $i=1,2,3$. \eqref{eq:dim1} becomes
\begin{equation}\label{eq:dim3}
d_4^2 = 1 + 2 n_1^2 + 2 n_2^2 + 2 n_3^2 \,.
\end{equation}
Now, this equation implies $d_4^2 \equiv 7 \mod 8$, since $1$ is the only odd square modulo $8$. Therefore, $d_4 \not\in \mbbZ$. Let $l = \lcm(n_1^2, n_2^2, n_3^2)$. Then $l$ is the square of an odd integer and hence $l \equiv 1 \mod 8$. Since $n_i^2 \mid d_4^2$, $m =\frac{d_4^2}{l} \equiv 7 \mod 8$. Therefore,
$$ 7 \le m =  \frac{d_4^2}{l} = \frac{1}{l} + 2 \frac{n_1^2}{l} + 2 \frac{n_3^2}{l} + 2 \frac{n_3^2}{l} \le 7\,. $$
This forces $n_1^2=n_2^2=n_3^2 = l =1$. Hence $d_1=d_2=d_3=2$, $d_4^2=7$ and $\dimC=28$.
\end{proof}

From this we obtain:
\begin{theorem}
The only strictly weakly integral rank 7 categories are metaplectic categories.
\end{theorem}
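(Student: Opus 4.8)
The plan is to show that $\mcC$ must contain a nontrivial Tannakian subcategory $\Rep(H)$ with $H\le G(\mcC)$, so that Proposition~\ref{p:bosonic_obj} forces $\dim\mcC=28=4\cdot 7$, and then to read off the conclusion from Theorem~\ref{t:4m}. First I would record two reductions. Since $\mcC$ is strictly weakly integral, Lemma~\ref{1-non-integral-object-ising} gives at least two simple objects of non-integral dimension (a single one would make $\mcC$ an Ising category of rank $3$), and \cite{GN} shows $\mcC$ is faithfully graded by a nontrivial elementary abelian $2$-group, so $G(\mcC)\neq 1$. The final assembly is then immediate once $\dim\mcC=28$: Theorem~\ref{t:4m} writes $\mcC\cong\mcD\boxtimes\mcP$ with $\mcD$ metaplectic and $\mcP$ cyclic (case (c) is excluded because $\mcC$ is non-integral, and case (a) would force the rank to be divisible by $3$); as $7$ is prime and $\mcC$ is not pointed, $\mcP$ is trivial and $\mcC\cong\mcD$ is metaplectic of rank $7$.

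To produce the Tannakian subcategory I would analyze $\mcC_{pt}$. Since $7$ is prime, \cite{M2} prevents any nontrivial Deligne decomposition of $\mcC$, so $\mcC_{pt}$ can contain no nontrivial modular subcategory; in particular the non-degenerate (modular) factor of the pointed premodular category $\mcC_{pt}$ must be trivial, i.e. $\mcC_{pt}=Z_2(\mcC_{pt})$ is symmetric. On a symmetric pointed category the twist restricts to a homomorphism $\theta\colon G(\mcC)\to\{\pm 1\}$, and its kernel generates the maximal Tannakian subcategory $\Rep(H)$ with $H=\ker\theta$. This subcategory is nontrivial—whence Proposition~\ref{p:bosonic_obj} gives $\dim\mcC=28$—unless $\ker\theta$ is trivial, that is, unless $G(\mcC)=\{\one,g\}$ with $\theta_g=-1$ and $\mcC_{pt}=\mathrm{sVec}$.

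The main obstacle, and the only remaining case, is precisely this fermionic one $\mcC_{pt}=\mathrm{sVec}$, where no Tannakian subcategory exists and which must therefore be excluded outright. Here both the universal grading and the weak-integrality grading are nontrivial quotients of $U(\mcC)\cong\mbZ_2$, hence coincide, so $\mcC_{ad}=\mcC_{int}$ and the nontrivial component consists exactly of the non-integral simple objects, the two components having equal dimension. Applying the balancing equation~\eqref{balancing equation} to a $g$-fixed simple object $X$ gives $S_{g,X}=\theta_g^{-1}d_X=-d_X$, so every $g$-fixed object anti-centralizes $g$ and thus lies in the non-integral component; consequently $g$ acts freely on the integral non-invertibles and $\mcC_{ad}$ has even rank, while the number of non-integral objects is odd and, by the first reduction, at least $3$. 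This leaves only $\operatorname{rank}(\mcC_{ad})\in\{2,4\}$. Rank $2$ gives $\dim\mcC=2\dim\mcC_{ad}=4$, impossible for $7$ simple objects; rank $4$ gives $\mcC_{ad}=\{\one,g,Y,g\otimes Y\}$ with $\dim Y=e\ge 2$, and since $\mcC_{ad}$ is closed under tensor, $Y\otimes Y^{*}$ expands in this basis with $N^{\one}_{Y,Y^{*}}=1$ and $N^{g}_{Y,Y^{*}}=\dim\Hom(Y,g\otimes Y)=0$, so that $e^2=1+(N^{Y}_{Y,Y^{*}}+N^{g\otimes Y}_{Y,Y^{*}})e$ forces $e\mid 1$, a contradiction. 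Hence $\mcC_{pt}=\mathrm{sVec}$ cannot occur, the Tannakian subcategory always exists, and the proof is complete.
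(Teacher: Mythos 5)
Your overall architecture (produce a nontrivial Tannakian subgroup of $G(\mcC)$, invoke Proposition \ref{p:bosonic_obj} to get $\dim\mcC=28$, and finish with Theorem \ref{t:4m}) agrees with the paper's treatment of the case $|G(\mcC)|=2$, and your elimination of the fermionic case $\mcC_{pt}\cong\sVec$ is correct and in fact more self-contained than the paper's: you replace the appeal to slight degeneracy and the rank-$4$ premodular classification by a direct fusion-rule computation in $\mcC_{ad}=\mcC_{int}$. The gap is in the step that is supposed to cover everything else. From ``$\mcC_{pt}$ contains no nontrivial modular subcategory'' you conclude that ``the non-degenerate (modular) factor of $\mcC_{pt}$ is trivial, i.e.\ $\mcC_{pt}$ is symmetric.'' A pointed premodular category does not in general factor as (modular) $\boxtimes$ (symmetric), and the asserted implication is false: for $G(\mcC)\cong\mbZ_4$ with quadratic form $q(x)=i^{x^2}$ the associated bilinear form $b(x,y)=(-1)^{xy}$ has radical $\{0,2\}$, so $\mcC_{pt}$ is neither symmetric nor modular, yet it contains no nontrivial modular subcategory (the only proper nontrivial subgroup $\{0,2\}$ is isotropic). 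Consequently your dichotomy ``either a nontrivial Tannakian subgroup exists, or $G(\mcC)=\{\one,g\}$ with $\theta_g=-1$'' is not established: when $|G(\mcC)|=4$ and $\mcC_{pt}$ is properly degenerate, $\theta$ restricted to $G(\mcC)$ need not be a $\{\pm 1\}$-valued homomorphism, and its ``kernel'' is not a priori a Tannakian subgroup.

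The uncovered case is exactly $|G(\mcC)|=4$, which is where the paper does genuinely separate work: it lists the possible dimension patterns of $\mcC_{ad}$ from the low-rank premodular classification and eliminates each by counting simple objects in the four grading components. Your dichotomy can in fact be repaired — one can check by hand that a pointed braided fusion category with no nontrivial modular subcategory and no nontrivial Tannakian subcategory must be $\Vec$ or $\sVec$ (an order-$2$ element $x$ with $b(x,x)=-1$ generates a modular subcategory; one with $b(x,x)=1$ and $\theta_x=1$ a Tannakian one; two distinct order-$2$ elements surviving these tests force either a modular $\mbZ_2\times\mbZ_2$ or a Tannakian $\mbZ_2$; and odd-order or $\mbZ_4$ subgroups always yield one of the two) — but that verification must be supplied. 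As written, the proof silently assumes $\mcC_{pt}$ is symmetric and thereby skips the $|G(\mcC)|=4$ case entirely.
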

\begin{proof}
Assume that $\mcC$ is strictly weakly integral of rank 7.
By \cite[Theorem 3.10]{GN} we have that $2\mid |G(\mcC)|$. Moreover, by Lemma \ref{1-non-integral-object-ising}, it follows that $|G(\mcC)|\leq 5$. So there are two cases to consider: $G(\mcC)=U(\mcC)\cong \mbZ_2$ or $|G(\mcC)|=4$.

First suppose $\mcD:=\mcC_{pt}$ has rank 2. Clearly $\mcD$ is not modular, as \cite{M2} implies that $\mcC$ can have no modular subcategories.  In particular $\mcD$ is premodular, and hence symmetric. If $\mcD$ is Tannakian, {i.e.} $\mcD\cong\Rep(\Z_2)$ then Prop. \ref{p:bosonic_obj} implies that $\dimC=28$ and we are done by Theorem \ref{t:4m}. Otherwise $\mcD\cong\sVec$ and we have $\mcC_{ad}^\prime=\mcD$ hence $\mcC_{ad}$ is slightly degenerate (\cite{ENO2}).  In particular $\mcC_{ad}$ must have even rank by \cite[Cor. 2.7]{ENO2}. It follow from \cite{B2} that $\mcC_{ad}$ has dimension 10, so $\dimC=20$.  This is impossible in rank $7$.

Now if $|G(\mcC)|=4$, consider the possible categories $\mcC_{ad}=\mcC_e$ corresponding to the universal grading.  Clearly $\mcC_{ad}$ has rank at least 2 and at most 4 (by the pigeonhole principle) and even dimension.  The classification of low-rank (integral) ribbon categories \cite{O1,O2,B2} gives possible simple dimensions in $\mcC_{ad}$ as: $\{1,1\}$,  $\{1,1,2\}$, $\{1,1,1,1\}$ and $\{1,1,2,2\}$, since the pointed subcateory of $\mcC_{ad}$ must have even dimension.  If $\mcC_{ad}$ is pointed with rank $2$ then one would have a rank $7$ category of dimension $8$, which is absurd.  In the second case the remaining three components must have dimension $6$, two of which contain one simple object of dimension $\sqrt{6}$.  The other component must contain the remaining two invertible objects, which is impossible.
If $\mcC_{ad}$ is pointed of rank $4$, the remaining three objects must lie in distinct components and each have dimension $2$.  Such a category is not strictly weakly integral.  In the last case we again have three remaining simple objects of equal dimension, since they reside in distinct components.  But two of them are invertible, which is impossible.
\end{proof}
It remains to consider integral modular categories of rank $7$. For this we employ the methods developed on subsection \ref{s:support weakly integral}.
\begin{lemma}\label{l:bounded_p}
Let $\mcC$ be a weakly integral modular category  of rank $\le 7$. If $p$ is an odd prime factor of $\FSexp(\mcC)$ then $p \le 7$ and $v_p(\FSexp(\mcC))=1$. If rank $\mcC =6$, $2 < p \mid \FSexp(\mcC)$ implies $p \le 5$.
\end{lemma}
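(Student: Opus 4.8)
The plan is to convert the statement into a bound on the length of a support cycle and then clear the few surviving primes case by case. For $\rank\mcC \le 6$ one may invoke existing classifications (ranks $\le 5$ are classified in the literature and rank $6$ in the preceding theorem) and simply read off $\FSexp$: the only odd prime divisors that occur are $3$ and $5$, each to the first power, so the rank $6$ assertion $p \le 5$ follows. I therefore assume $\rank\mcC = 7$. Fix an odd prime $p$ with $v_p(N) = a \ge 1$ for $N = \FSexp(\mcC)$, and let $\s$ be a $p$-automorphism. By Lemma \ref{l:tech1} the permutation $\hs$ has a maximal ($p^a$-)support cycle $C$ with $\tfrac{\varphi(p^a)}{2} \mid \ord(C)$, and by Lemma \ref{l:tech2} the label $0$ lies in no support cycle; hence $\tfrac{\varphi(p^a)}{2} \le \ord(C) \le \rank\mcC - 1 = 6$. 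Thus $\varphi(p^a) \le 12$, which leaves $p \in \{3,5,7,11,13\}$ with $a = 1$ when $p \ge 5$ and $a \le 2$ when $p = 3$. It remains to exclude $p = 11, 13$ and to rule out $v_3(N) = 2$.

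For $p \in \{11,13\}$ the divisibilities $\tfrac{p-1}{2} \mid \ord(C) \mid (p-1)$ together with $\ord(C) \le 6 < p-1$ force a single $p$-support cycle of length $l = \tfrac{p-1}{2}$, so Theorem \ref{t:gal}(i) applies and gives $\dim\mcC = \tfrac14 d_1^4 p$ with $d_1^2$ even; writing $d_1^2 = 2m$ we get $\dim\mcC = p m^2$. For $p = 13$ the cycle already exhausts the non-unit objects, whence $\dim\mcC = 1 + 6 d_1^2 = 1 + 12m = 13 m^2$; this forces $m = 1$, and $\dim\mcC = 13$ being square-free forces $\mcC$ to be pointed, contradicting $d_1 = \sqrt 2$. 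For $p = 11$ there is exactly one further simple object $X$, and $d_X^2 = 11m^2 - 10m - 1 = (11m+1)(m-1)$. Because $\dim\mcC$ and $d_X^2$ are positive integers, the formal codegree $\dim\mcC/d_X^2$ is a rational algebraic integer, so $d_X^2 \mid \dim\mcC = 11m^2$; since $\gcd(11m+1, 11m^2) = 1$, this is impossible.

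To eliminate $v_3(N) = 2$ I exploit a vanishing of twist sums. On a $9$-support cycle with $1 \in C$, Lemma \ref{l:tech2} gives $\theta_{\hs^j(1)} = x\,\s^{2j}(\zeta)$ with $\ord(\zeta) = 9$, and summing the twists over a full cycle produces a multiple of $\zeta + \s^2(\zeta) + \s^4(\zeta) = \zeta_9 + \zeta_9^4 + \zeta_9^7 = 0$ (the three cube roots of $\zeta_3$). Hence every $9$-support cycle contributes $0$ to $p_+ = \sum_i d_i^2 \theta_i$ and to $p_-$. A maximal support cycle here has length $3$ or $6$; if the six non-unit objects are covered by such cycles --- one of length $6$, or two of length $3$ --- then $p_+ = \theta_0 = 1$ and $\dim\mcC = p_+ p_- = 1$, which is absurd.

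The one remaining configuration --- a single $9$-support cycle of length $3$ together with three further simple objects (none of maximal $3$-support) --- is where I expect the real difficulty. Here Theorem \ref{t:gal} is unavailable (it assumes $v_p(N) = 1$), and the vanishing leaves $p_+ = 1 + \sum_{j} d_j^2 \theta_j$ ranging over only the three extra labels, while $\dim\mcC = 1 + 3 d_1^2 + \sum_j d_j^2$ still carries the ``invisible'' mass $3 d_1^2$; a crude triangle-inequality estimate is not enough to close the gap. I would dispose of this case by the same twist-equation bookkeeping used in the proof of Theorem \ref{t:gal}, now carried out along the length-$3$ cycle and the three residual objects, to extract a dimension identity and hence a divisibility (or square-free) contradiction; for the strictly weakly integral subcase one can instead note that such a category is metaplectic of dimension $28$, whose Frobenius--Schur exponent has no factor of $9$, so only the integral case must be settled this way.
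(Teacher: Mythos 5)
Most of your argument is sound, and in places cleaner than the paper's: the bound $\varphi(p^a)\le 12$ from the maximal support cycle avoiding $0$ (Lemmas \ref{l:tech1} and \ref{l:tech2}) is exactly the paper's starting point; your exclusion of $p=13$ via $1+12m=13m^2$ and of $p=11$ via $d_X^2=(11m+1)(m-1)$ together with $d_X^2\mid\dim\mcC$ both check out, and they bypass the paper's analysis of the centralizer of $\hs$ in $S_7$ and parts (ii)--(iii) of Theorem \ref{t:gal}, using only part (i). Deferring ranks $\le 6$ to the existing classifications is also legitimate, since the rank-$6$ theorem precedes this lemma and does not depend on it.

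The genuine gap is the last subcase of $v_3(\FSexp(\mcC))=2$: a single $9$-support cycle of length $3$ together with three further nontrivial simple objects. You state that you ``would dispose of this case by the same twist-equation bookkeeping used in the proof of Theorem \ref{t:gal},'' but that bookkeeping does not transfer. The engine of Theorem \ref{t:gal} is the evaluation $\sum_{r}\s^{2r}(\z)=\tfrac{-1\pm\varepsilon\sqrt{p}}{2}$ for $\ord(\z)=p$ prime, whose irrational part $\sqrt{p}$ allows one to split $p_+$ using $\mbbQ_q$-linear independence of $\{1,\sqrt{p}\}$; when $\ord(\z)=9$ this sum is $0$ (as you yourself exploit), so the $9$-cycle contributes nothing to $p_+$, no $\sqrt{p}$ appears, and the linear-independence mechanism is unavailable. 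What survives is only $|1+\sum_{j=4}^{6}d_j^2\theta_j|^2=1+3d_1^2+\sum_{j=4}^{6}d_j^2$, which, as you concede, is not contradictory by size alone (it is satisfied numerically by $d_1^2=4$, $d_4=d_5=d_6=1$, $\theta_4=\theta_5=\theta_6=1$); ruling it out would require an additional arithmetic input (for instance the Cauchy theorem of \cite{BNRW} relating the prime divisors of $\FSexp(\mcC)$ and $\dim\mcC$), which you do not supply. So the integral subcase here is genuinely open in your write-up. For what it is worth, the paper's own proof only argues $v_p(\FSexp(\mcC))\le 1$ for $p\in\{5,7,11,13\}$ and is silent on $p=3$, so you have pushed further than the printed argument --- but the lemma asserts $v_3(\FSexp(\mcC))=1$ as well, and your proof does not close that case.
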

\begin{proof}
Suppose $\mcC$ is a weakly integral modular category of rank $\le 7$ and $p$ is the largest odd prime factor of $\FSexp(\mcC)$. Let $\s$ be a $p$-automorphism. Then, by Lemma \ref{l:tech1}, $p \le 13$. For any prime $p=5,7,11,13$, $v_p(\FSexp(\mcC))\le 1$ otherwise $\hs$ admits a support cycle of length $(p^2 - p)/2 \geq 10$ by Lemma \ref{l:tech1}. This certainly won't happen in ranks $\le 7$.

 If $p=13$, then $\hs=(1,2,3,4,5,6)$ by Lemma \ref{l:tech2}. Since the centralizer of $(1,2,3,4,5,6)$ in $S_7$ is $\langle(1,2,3,4,5,6)\rangle$, we get $\Gal(\mcC) = \langle(1,2,3,4,5,6)\rangle$. However, this contradicts Theorem \ref{t:gal}. Therefore, $p<13$.

 If $p=11$, then $(1,2,3,4,5)$ is the unique $p$-support cycle of $\s$. Thus,
 $$
 \hs =(1,2,3,4,5)    \text{ or } (1,2,3,4,5) (0,6).
 $$
 By Theorem \ref{t:gal}, $\Gal(\mcC) = \langle \hs \rangle$ implies rank $\mcC = 11$ or $9$. Therefore, $\langle \hs \rangle \subsetneq \Gal(\mcC)$ and $\dim \mcC = \frac{11 d_1^4}{4}$. Since the centralizer of $\langle \hs \rangle$ is $\langle(1,2,3,4,5), (0,6)\rangle$, $\Gal(\mcC) = \langle(1,2,3,4,5), (0,6)\rangle$, and we have
 $$
 \frac{11 d_1^4}{4} = 2+ 5 d_1^2\,.
 $$
 However, the equation has no integral solution for $d_1^2$. Therefore, $p \le 7$.

 If rank $\mcC =6$ and $p=7$, then $\s$ has a unique support cycle $(1,2,3)$ and $ \langle \hs \rangle \subsetneq \Gal(\mcC)$ by Theorem \ref{t:gal}. Thus, $\Gal(\mcC) = \langle (1,2,3), (0,5)\rangle$ or $\langle (1,2,3), (4,5)\rangle$. The second case implies $\mcC$ is integral and
 $$
  \frac{7 d_1^4}{4} = 3 d_1^2 +  2 d_4^2 + 1\,.
 $$
 by Theorem \ref{t:gal}(i).
 The right hand side of this equation is an
 integer and we deduce that $2\mid d_{1}$. Reduction modulo $2$ now reveals the contradictory expression: $0 \equiv 1 \mod 2$.
 Therefore, $\Gal(\mcC) = \langle (1,2,3), (0,5)\rangle$ and
  $$
 \frac{7 d_1^4}{4} = 3 d_1^2 +   d_4^2 + 2\,.
 $$
 Next observe, that the Galois group moves $0$ and hence $\mcC$ is strictly
 weakly integral. In this case, \cite{GN} implies that $\mcC$ is faithfully
 graded by an elementary abelian 2-group with the trivial component being given
 by the integral subcategory. So by dimension count we see that either $d_{1}$
 or $d_{4}$ is and integer and the other dimension is not. Of course, if
 $d_{4}\notin\mbbZ$, then it corresponds to a unique object of non-integral dimension and
 so \lemmaref{1-non-integral-object-ising} implies that $\mcC$ is an Ising modular category, an
 impossibility. Therefore, $d_{4}\in\mbbZ$ and $d_{1}\notin\mbbZ$.
 So $\frac{7 d_1^4}{8} = 3 d_1^2$ but this implies $d_1^2=0$ or $\frac{24}{7}$, a contradiction.
 \end{proof}

 \begin{lemma}\label{l:3+3}
  Suppose $\mcC$ is a weakly integral modular category of rank 7 such that $d_1=d_2=d_3\leq d_4= d_5=d_6$. Then, one of the following statements holds:
  \begin{enumerate}
    \item $(d_1, d_4)=(1,1)$ and $\dimC=7$.
    \item $(d_1, d_4)=(1,\sqrt{2})$ and $\dimC=10$.
    \item $(d_1, d_4)=(1,2)$ and $\dimC=16$.
  \end{enumerate}
\end{lemma}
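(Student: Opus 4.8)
The plan is to turn the hypothesis into a finite Diophantine problem via Frobenius divisibility, and then strip away the spurious solutions using the group of invertible objects. Write $a=d_1^2$ and $b=d_4^2$; these are positive integers with $a\le b$ and $\dimC=1+3a+3b$. Since $\mcC$ is weakly integral and modular, $\dimC/d_i^2$ is an algebraic integer for each simple $V_i$ (cf. \cite[Lemma~1.2]{EG}, \cite[Prop.~8.27]{ENO1}, \cite[Prop.~2.11(i)]{ENO2}), and being a rational number it lies in $\mbZ$. Applying this to the two nonunit dimensions and discarding the multiples of $a$ and $b$ already present in $1+3a+3b$ yields the pair of conditions $a\mid 1+3b$ and $b\mid 1+3a$.

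I would then enumerate the solutions. Setting $c=(1+3a)/b\in\mbZ_{>0}$, the inequality $a\le b$ gives $c\le 3+1/a$, while reducing $cb=1+3a$ modulo $3$ shows $3\nmid c$; hence $c\in\{1,2\}$ when $a\ge 2$ and $c\in\{1,2,4\}$ when $a=1$. Substituting $b=(1+3a)/c$ back into $a\mid 1+3b$ is then a short finite check: $c=1$ gives $b=1+3a$ together with $a\mid 4$; $c=2$ forces $a$ odd and $a\mid 5$; and $c=4$ forces $a=1$. The complete list of candidate pairs is
$$(a,b)\in\{(1,1),(1,2),(1,4),(2,7),(4,13),(5,8)\}.$$

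Finally I would discard the last three pairs. Each of $(2,7),(4,13),(5,8)$ has $d_1>1$, so every nonunit simple object has dimension greater than $1$ and $\1$ is the only invertible object; thus $G(\mcC)$, and hence $U(\mcC)$, is trivial. However each of these pairs also contains a non-integral dimension (respectively $\sqrt{2}$, $\sqrt{13}$, $\sqrt{5}$), so $\mcC$ would be strictly weakly integral and therefore faithfully graded by a nontrivial elementary abelian $2$-group by \cite[Theorem~3.10]{GN} --- a contradiction. The three surviving pairs $(1,1),(1,2),(1,4)$ translate directly into $(d_1,d_4)=(1,1),(1,\sqrt2),(1,2)$ with $\dimC=7,10,16$, which are precisely alternatives (i)--(iii).

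The step I expect to require the most care is keeping the Diophantine analysis genuinely exhaustive; the two facts that make it finite are the bound $c\le 3+1/a$ coming from $a\le b$ and the congruence obstruction $3\nmid c$, which together leave only a handful of cases to settle by the divisibility $a\mid 1+3b$.
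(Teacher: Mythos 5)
Your proof is correct and follows essentially the same strategy as the paper: both arguments rest on the integrality of $\dimC/d_i^2$ to reduce the hypothesis $\dimC=1+3d_1^2+3d_4^2$ to a finite Diophantine check, and both use $U(\mcC)\cong G(\mcC)$ together with \cite[Theorem 3.10]{GN} to exclude the possibilities with $d_1>1$. The only difference is organizational: the paper notes that $d_1^2$ and $d_4^2$ are coprime, so $d_1^2d_4^2\mid\dimC$ and $d_1^2\le\dimC/d_4^2\le 7$, and applies the invertible-object argument up front to force $d_1=1$, whereas you enumerate all six solutions of the pair of divisibilities first and then discard $(2,7)$, $(4,13)$, $(5,8)$ by the same grading contradiction.
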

  \begin{proof}
   The assumption implies the equality
    $$
 \dimC = 1 + 3 d_1^2+3 d_4^2\,.
  $$
   Thus $d_1^2, d_4^2$ are relatively prime and so $d_1^2 d_4^2 \mid \dimC$. Then
  \begin{equation}\label{eq:int1}
    d_1^2 \mid \frac{\dimC}{d_4^2} =\frac{1}{d_4^2} + 3 \frac{d_1^2}{d_4^2} +3 \in \mbbZ
  \end{equation} implies $\frac{\dimC}{d_4^2}\leq 7$.  In particular $d_1^2\leq 7$ and $d_4^2\mid (1+3d_1^2)$.

  Now if $\mcC$ is strictly weakly integral, we must have $d_1=1$ since $|U(\mcC)|\neq 1$.  In this case $d_4^2$ divides $4$ so that non-integrality implies $d_4=\sqrt{2}$.

  Otherwise, $d_1\in\mbbZ$ implies $d_1=1$ or $d_1=2$.  If $d_1=1$ and $d_4\in\mbbZ$ then $d_4\in\{1,2\}$ since $d_4^2\mid 4$.  If $d_1=2$, we have $d_4^2\mid 13$, which is impossible since $d_4\geq d_1$.

  Thus the three possibilities are as in the statement.

  \end{proof}

 \begin{remark}\label{remark-dim-max}
   The proof of this lemma reveals that if $\mcC$ is a non-pointed weakly
   integral modular category and $d_{\max}^2 = \max_i{d_i^2}$, then $\frac{\dimC}{d_{\max}^2}$ is a positive integer strictly less than $\rank \mcC$.
 \end{remark}
 \begin{prop} \label{p:7}
   If $\mcC$ is a weakly integral modular category of rank $7$ and $7 \mid \FSexp(\mcC)$, then $\mcC$ is either pointed or $\dim \mcC =28$.
 \end{prop}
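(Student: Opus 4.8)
The plan is to fix a $7$-automorphism $\sigma$ of $\mcC$ (available because $7 \mid \FSexp(\mcC)$) and to read off the possible cycle types of the induced permutation $\hat{\sigma}$ on the simple objects. By Lemma \ref{l:bounded_p} we have $v_7(\FSexp(\mcC)) = 1$, so Lemma \ref{l:tech1} forces each $7$-support cycle to have length divisible by $\tfrac{\varphi(7)}{2}=3$ and dividing $\varphi(7)=6$, i.e. length $3$ or $6$. Since the unit never lies in a support cycle (Lemma \ref{l:tech2}) and $\rank\mcC=7$, the support cycles of $\hat{\sigma}$ can only be a single $6$-cycle, a single $3$-cycle, or a pair of $3$-cycles. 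First I would dispatch the strictly weakly integral case wholesale: such a $\mcC$ has already been shown to be metaplectic, and the unique rank $7$ metaplectic category $SO(7)_2$ has $\dim = 28$. It then remains to prove that an integral $\mcC$ of this type is pointed, which I would do case by case.

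In the single $6$-cycle case $\hat{\sigma}=(1\,2\,3\,4\,5\,6)$ fixes the unit; as $\Gal(\mcC)$ is abelian and contains $\hat{\sigma}$, it is contained in the centralizer of $\hat{\sigma}$ in $S_7$, which is $\langle\hat{\sigma}\rangle$. Hence $\Gal(\mcC)=\langle\hat{\sigma}\rangle$, and Theorem \ref{t:gal}(ii) yields that $\mcC$ is pointed. In the two-$3$-cycle case $\hat{\sigma}=(1\,2\,3)(4\,5\,6)$ fixes the unit, and since $\sigma^2$ fixes every dimension the dimension is constant along each (odd-length) cycle; relabelling so that $d_1=d_2=d_3\le d_4=d_5=d_6$, Lemma \ref{l:3+3} forces $\dim\mcC\in\{7,10,16\}$. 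Integrality rules out $10$, while the Cauchy theorem (\cite{BNRW}) --- $7\mid\FSexp(\mcC)$ implies $7\mid\dim\mcC$ --- rules out $16$; so $\dim\mcC=7$ and $\mcC$ is pointed.

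The decisive case is a single $3$-support cycle $(1\,2\,3)$, with the four remaining labels outside the support. Here Theorem \ref{t:gal}(i) applies with $l=\tfrac{p-1}{2}=3$, giving $\dim\mcC=\tfrac{7 d_1^4}{4}$ and $2\mid d_1^2$; integrality then forces $d_1=2b$ and $\dim\mcC=28b^4$. Summing squared dimensions over the seven simple objects gives
\[
d_4^2+d_5^2+d_6^2 = 28b^4 - 12b^2 - 1,
\]
and the key step is to observe that the right-hand side is $\equiv 7 \pmod 8$ for every $b$ (checking the parities of $b$ separately). By Legendre's three-square theorem it is therefore never a sum of three integer squares, contradicting the integrality of $d_4,d_5,d_6$. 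Thus the single-$3$-cycle case does not occur, and every integral category of the stated form is pointed, completing the proof.

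I expect the single-$3$-cycle case to be the main obstacle. The formula $\dim\mcC=28b^4$ leaves an a priori infinite family of candidate dimensions, and the divisibility bound of Remark \ref{remark-dim-max} does not by itself eliminate them. The essential idea is to transfer the problem to the leftover dimension sum, reduce modulo $8$, and apply the three-square theorem, which disposes of all $b$ at once rather than requiring an explicit bound on $b$; verifying that this congruence is forced --- and that $d_1$ is genuinely even here, via Theorem \ref{t:gal}(i) --- is where the real content lies.
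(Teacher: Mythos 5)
Your proof is correct, and in the decisive case it takes a genuinely different route from the paper's. The $(1,6)$ and $(1,3,3)$ configurations are handled in essentially the paper's way: the paper also reduces the two-$3$-cycle case to Lemma \ref{l:3+3} (with the same appeal to Cauchy to kill dimensions $10$ and $16$), and your centralizer argument for the $6$-cycle is the same device the paper uses for $p=13$ in Lemma \ref{l:bounded_p}. The two real departures are these. First, you quarantine the strictly weakly integral case by citing the earlier theorem that such rank-$7$ categories are metaplectic, hence of dimension $4\cdot 7=28$; this is legitimate (that theorem precedes Proposition \ref{p:7} and its proof does not invoke it), whereas the paper keeps Proposition \ref{p:7} independent of that result and must therefore carry the non-integral possibility through the single-$3$-cycle analysis. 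Second, in the integral single-$3$-cycle case the paper enumerates the possible cycle types of $\hs$, uses commutativity of $\Gal(\mcC)$ to produce an additional Galois element, derives the auxiliary equation \eqref{eq:3+2:1}, and pins down $(d_1^2,d_4^2,d_6^2)=(4,7,1)$ by a delicate mod $2$/mod $4$ analysis together with Remark \ref{remark-dim-max}; you instead note that Theorem \ref{t:gal}(i) alone already gives $\dimC=28b^4$ with $d_1=2b$, so that $d_4^2+d_5^2+d_6^2=28b^4-12b^2-1\equiv 7 \pmod 8$, which by Legendre's three-square theorem is never a sum of three integer squares. This kills all $b$ at once, requires no knowledge of the remaining cycles of $\hs$, and is consistent with the paper's finding that the only non-pointed survivor of this configuration is the strictly weakly integral category with $d_4=d_5=\sqrt{7}$ (where the three-square obstruction does not apply). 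What the paper's longer route buys is the explicit dimension vector and Galois group of the dimension-$28$ category; what yours buys is brevity and independence from \eqref{eq:3+2:1}.
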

 \begin{proof} Assume $\mcC$ is not pointed. We first show that $\Gal(\mcC)$ does not contain any permutation of type $(1,3,3)$ or $(1,6)$ with $0$ fixed. Assume the contrary. Then $d_1=d_2=d_3$, $d_4=d_5=d_6$ and we have
$$
\dimC = 1 + 3 d_1^2+3 d_4^2.
$$
By Lemma \ref{l:3+3}, $\mcC$ must be pointed, a contradiction.

Let $p =7$, $\s$ the $p$-automorphism of $\mcC$ and $C_1$  a $p$-support cycle of $\s$. Then, in view of Lemma \ref{l:tech2}, $\ord(C_1)=3$, say $C_1=(1,2,3)$. Since all the 7-support cycles must have length $\ge 3$, $C_1$ is the unique 7-support cycle of $\s$ and all other disjoint cycles are of length less than 3. It follows from Theorem \ref{t:gal} that $\dimC = \frac{7 d_1^2}{4}$ and $2 \mid d_1^2$. Moreover,
$$
\hs \in\{ (1,2,3),\, (1,2,3)(4,5),\, (0,6)(1,2,3)(4,5) \text{ or } (0,6)(1,2,3)\}
$$
 up to renumbering the non-zero labels.

If $\hs$ were one of the first three cases, then $\langle \hs \rangle \ne \Gal(\mcC)$ otherwise it will contradicts Theorem \ref{t:gal}. Since $\Gal(\mcC)$ is an abelian subgroup of $S_7$ containing $\hs$, there exists an element of the form $C_1^i(4,5), (0,6)C_1^i(4,5)$ in $\Gal(\mcC)$, for some $i=1,2$. In particular, we have the equation
 \begin{equation}\label{eq:3+2:1}
 \frac{7 d_1^4}{4} = 1+ 3 d_1^2+ 2 d_4^2+ d_6^2,
 \end{equation}
  with $2 \mid d_1^2$.

 We claim that if the dimensions the simple objects of $\mcC$ satisfy \eqref{eq:3+2:1}, then $\mcC$ is a prime modular category of dimension 28. We first observe that the equation \eqref{eq:3+2:1} modulo 2 implies that the parities of $\frac{d^2_1}{2}$ and $d_6^2$ are opposite. Since $d_6^2 \mid \frac{7 d_1^4}{4}$, $d_6^2$ must be odd and $d_1^2/2$ is even. Thus,
  \begin{equation} \label{eq:3+2}
  28 n_1^2= 1+ 12 n_1+ 2 d_4^2+ d_6^2\, ,
  \end{equation}
 where $d_1^2 = 4n_1$ for some positive integer $n_1$. If $d_6^2 \equiv 3 \mod 4$, then $d_6 \not\in \mbbZ$ and so there exists $\t \in \Gal(\mcC)$ which admits a transposition of the form $(0,j)$, and in particular, $d_j=1$. Thus, $j$ can only be $4$ or $5$ but this does not balance the equation \eqref{eq:3+2} modulo $4$. Therefore, $d_6^2 \equiv 1 \mod 4$ and hence $d_4^2$ is odd.

Since $d_{\max}^2 \mid \dimC = 28 n_1^2$ and $\frac{\dimC}{d_{\max}^2} < 7$,
$d_{\max}^2 \ne d_1^2$ and so $d_{\max}^2 = d_4^2$ or $d_6^2$. In particular, $d_{\max}^2$ is odd.
Thus, $\frac{\dimC}{d_{\max}^2} =4$ or $d_{\max}^2=7n_1^2 \equiv 3 \mod 4$. Hence, $d_{\max}^2= d_4^2$ and \eqref{eq:3+2} becomes
$$
14 n_1^2= 1+ 12 n_1+ d_6^2\,.
$$
In particular, $d_6^2$ and $n_1$ are relatively prime. Since $d_6^2 \mid 28 n_1^2$ and $d_6^2 \equiv 1\mod 4$, $d_6^2 = 1$ and so
  $$
  7n_1^2 - 6 n_1 -1 = 0\,.
  $$
This equation forces $n_1=1$ and hence $(d_1^2, d_4^2, d_6^2) = (4,7,1)$. Therefore, $\mcC$ is a prime modular category of dimension $28$, and this proves the claim.

As a consequence of the preceding claim, $\hs = (0,6)(1,2,3)$. To complete the proof, it suffices to show that $\langle\hs\rangle = \Gal(\mcC)$. If not, then $\Gal(\mcC)$ contains $(4,5)$ and so the dimensions of $\mcC$ satisfy \eqref{eq:3+2:1} again. This implies $\mcC$ is a prime modular category of dimension 28 but then $\Gal(\mcC) = \langle\hs\rangle$, which yields to a contradiction.
\end{proof}

\begin{theorem} If $\mcC$ is an integral modular category of rank $7$, then $\mcC$ is pointed.
\end{theorem}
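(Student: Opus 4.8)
The plan is to assume $\mcC$ is integral of rank $7$ and not pointed, and to derive a contradiction. Since $\mcC$ is integral, all $d_i$ are positive integers with $d_0=1$, some $d_i\ge 2$, and each $d_i$ divides $\dim\mcC$. Two inputs will be used repeatedly. First, Proposition~\ref{p:bosonic_obj}: $\mcC$ can contain no non-trivial Tannakian subcategory generated by invertibles, since such a subcategory would make $\mcC$ strictly weakly integral of dimension $28$. Second, the M\"uger splitting of \cite{M2}: any modular pointed subcategory $\mcD\subseteq\mcC$ yields $\mcC\cong\mcD\boxtimes\mcD'$, so $\rank\mcD$ divides the prime $\rank\mcC=7$; as $\mcC$ is not pointed $\mcD$ is proper, whence $\rank\mcD=1$ and $\mcD$ is trivial. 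Thus $\mcC$ has no non-trivial modular pointed subcategory.

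I would first cut down $G:=G(\mcC)$. Assume $|G|\ge 2$ and put $K=Z_2(\mcC_{pt})=\mcC_{pt}\cap\mcC_{ad}$, a symmetric pointed category. By Proposition~\ref{p:bosonic_obj} it has no non-trivial Tannakian part, so $K\cong\Vec$ or $K\cong\sVec$. If $K\cong\Vec$, then $\mcC_{pt}$ is modular, hence trivial by the previous paragraph, contradicting $|G|\ge 2$. If $K\cong\sVec$ and $|G|\ge 4$, the metric group of $\mcC_{pt}$ still has a non-trivial non-degenerate orthogonal summand, which generates a non-trivial modular pointed subcategory and is excluded the same way. This should leave only the two extreme configurations $|G|=1$ or $\mcC_{pt}\cong\sVec$ (so $|G|=2$, carried by a fermion $f$ with $\theta_f=-1$).

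Next I would bound the primes. Write $N=\FSexp(\mcC)$; by Lemma~\ref{l:bounded_p} the odd primes dividing $N$ lie in $\{3,5,7\}$ to the first power. If $7\mid N$, Proposition~\ref{p:7} gives $\mcC$ pointed or $\dim\mcC=28$; the latter is $4\cdot 7$ with $7$ odd square-free, so Theorem~\ref{t:4m} applies and, as its alternatives (a),(b) are non-integral, integrality forces the pointed alternative (c) --- contradiction either way. Hence $7\nmid N$, and by the Cauchy theorem of \cite{NS} every prime factor of $\dim\mcC$ lies in $\{2,3,5\}$, so all $d_i$ are supported on $\{2,3,5\}$. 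The Galois machinery then attacks the cores. Recalling $\hs(0)\in G$, that $\hs$ fixes dimensions, and that the cycle of $\hs$ through $0$ has length $\le 2$ (Lemma~\ref{l:tech2}), I would take a $5$-automorphism when $5\mid N$: its support cycles have length $2$ or $4$ (Lemma~\ref{l:tech1}). A single support $4$-cycle gives $\dim\mcC=5d_1^4$ by Theorem~\ref{t:gal}(i), and solving the residual equation $1+4d_1^2+\sum_{\mathrm{rest}}d_j^2=5d_1^4$ on the at most two leftover labels against ``prime factors in $\{2,3,5\}$'' eliminates it; two support $2$-cycles are handled by Lemma~\ref{l:tech3}, whose outputs clash with $\dim\mcC=\sum_i d_i^2$; the mixed types fall to the same twist-equation manipulations used to prove Theorem~\ref{t:gal}. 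This should yield $5\nmid N$, leaving $N=2^a$ or $2^a\cdot 3$. For $3\mid N$, combining the involutive $3$-automorphism with the remaining Galois elements restricts the dimension vector, the relevant $3{+}3$ patterns being those of Lemma~\ref{l:3+3}, whose outputs are dimension $7$ (pointed), the excluded non-integral dimension $10$, and dimension $16$ --- the last eliminated by the dichotomy of the second paragraph (its order-$4$ pointed part is modular or carries a Tannakian subcategory).

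The residual obstacle, which I expect to be the hardest part, is the fermion core $\mcC_{pt}\cong\sVec$ together with the case $\dim\mcC=2^a$, where support cycles are uninformative. Here the plan is structural: a modular category of prime-power dimension is nilpotent, so $G\neq 1$ and we are in the $\sVec$ core; there $\mcC_{ad}=\mcC_{pt}'$ is slightly degenerate with $Z_2(\mcC_{ad})\cong\sVec$, hence of even rank by \cite[Cor.~2.7]{ENO2}, so that $\mcC_{ad}$ and the non-trivial graded component have tightly constrained ranks and dimensions. Feeding these into the low-rank premodular classifications of \cite{O4,B2} (exactly as in the rank-$6$ and strictly-weakly-integral rank-$7$ arguments), together with the prime restriction to $\{2,3,5\}$, should exhaust the possibilities and force a contradiction, completing the proof that $\mcC$ is pointed. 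The bookkeeping needed to certify that none of the dimension equations produced along the way admits an admissible integer solution is the main technical burden.
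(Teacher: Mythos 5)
Your outline assembles the right toolkit (Lemma \ref{l:bounded_p}, Proposition \ref{p:7} plus Theorem \ref{t:4m} for the prime $7$, the support-cycle machinery for $5$, Proposition \ref{p:bosonic_obj} and M\"uger splitting to constrain $\mcC_{pt}$), and some of your preliminary reductions (no nontrivial modular pointed subcategory since $7$ is prime; $\mcC_{pt}$ trivial or $\sVec$) are sound and even go beyond what the paper records. But as a proof it has two genuine problems. First, the steps that constitute essentially all of the actual work are deferred with phrases like ``solving the residual equation \dots eliminates it,'' ``whose outputs clash with $\dimC=\sum_i d_i^2$,'' and ``should exhaust the possibilities.'' The paper's proof of this theorem is precisely that bookkeeping: a delicate chain of congruence and divisibility arguments (mod $2$, $4$, $8$, together with $d_{\max}^2\mid\dimC$ and Remark \ref{remark-dim-max}) that rules out, one by one, every admissible cycle type of a $5$-automorphism when $\dimC=2^a3^b5^c$ with $abc\ge 1$, ending in a contradiction with Theorem \ref{t:gal}. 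Until those Diophantine eliminations are actually carried out, the claim is not proved; and note that the paper's eliminations of the length-$4$ and $(1,2,2,2)$ configurations lean on $2\mid\dimC$, which your plan has not secured at the point where you invoke them (in your $|G(\mcC)|=1$ branch $\dimC$ could a priori be odd).

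Second, your treatment of the prime $3$ is based on a misreading of the machinery. For $p=3$ with $v_3(\FSexp\mcC)=1$, Lemma \ref{l:tech1} gives $\tfrac{\varphi(3)}{2}=1\mid\ord(C)\mid\varphi(3)=2$, so a $3$-automorphism acts as a product of transpositions; it never produces the $(1,3,3)$ or $(1,6)$ cycle types to which Lemma \ref{l:3+3} applies. Those patterns arise from $p=7$ (where $\varphi(7)/2=3$), which is exactly where the paper uses Lemma \ref{l:3+3} (inside Proposition \ref{p:7}). So your proposed route ``eliminate $5$, then eliminate $3$, then handle $2^a$'' stalls at the second step. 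The paper avoids this entirely by a different decomposition: it reduces the theorem to showing $7\mid\dimC$, disposes of the case where $\dimC$ has at most two prime factors from $\{2,3,5\}$ via Proposition \ref{p:bosonic_obj} (solvability forces a Tannakian pointed subcategory), and then only needs the single hard analysis of $\dimC=2^a3^b5^c$ via the $5$-automorphism --- the prime $3$ is never attacked directly. I would recommend restructuring along those lines rather than trying to eliminate each prime in turn.
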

\begin{proof} Let $\mcC$ be an integral modular category of rank $7$, and assume it is not pointed. By Lemma \ref{l:bounded_p}, the prime factors of $\FSexp(\mcC)$ can only be $2,3,5, 7$. By the Cauchy Theorem \cite{BNRW}, the prime factors of $\dimC$ can only be $2,3,5, 7$. In view of Proposition \ref{p:7}, it suffices to prove that $7 \mid \dimC$. Equivalently, it enough to show none of $2,3$ or $5$ is a prime factor of $\dimC$. In view of Proposition \ref{p:bosonic_obj}, it is not possible that only two of the these primes are factors of $\dimC$. It suffices to show that $\dimC=2^a 3^b 5^c$  with $abc \ge 1$ is not possible.

Suppose $\dimC=2^a 3^b 5^c$  with $abc \ge 1$. Let $\s$ be a $5$-automorphism of $\mcC$, and $C_1$ a $5$-support cycle of $\s$. We first show that length of $C_1$ must be 2. If not, then $\ord(C_1)=4$ and so that:
\begin{equation}\label{eq:4+0}
\dim \mcC = 1+ 4 d_1^2 + d_5^2+d_6^2
\end{equation}
where we have ordered the simple objects so that $C_1=(1,2,3,4)$. This equation modulo 2 implies
$$
0 \equiv 1+ d_5^2+d_6^2 \mod 2\,.
$$
Without loss of generality, we may assume $d_5$ is odd and $d_6$ is even. In particular, $4 \mid \dimC$ as $d_6^2 \mid \dimC$. However, we then find
$0 \equiv 2    \mod 4 $, a contradiction. Therefore, the dimensions of $\mcC$ do not satisfy \eqref{eq:4+0}, and so $\Gal(\mcC)$ does not contain any permutation which admits a disjoint cycle of length $\ge 4$. In particular, $\ord(C_1) < 4$.

Now, we may assume $C_1 =(1,2)$ and proceed to show that this is a unique $5$-support cycle of $\s$. Then $\hs =(1,2)(3,4)$ or  $\hs =(1,2)(3,4)(5,6)$. However, if $\Gal(\mcC)$ contains any permutation of type $(1,2,2,2)$ with $0$ fixed,  the we have:
 \begin{equation}\label{eq:2+2+2}
\dimC = 1+ 2 d_1^2 + 2 d_3^2+2 d_5^2 \equiv 1 \mod 2\,.
\end{equation}
This is not possible since such a category is pointed by \cite{BR1}. Moreover, it implies that $\Gal(\mcC)$ does not contain any permutation of type $(1,2,2,2)$ with $0$ fixed. In particular, $\hs = (1,2)(3,4)$.

If $(3,4)$ is also a 5-support cycle of $\hs$ then, by Lemma \ref{l:tech3}, the dimensions $d_i$ of $\mcC$ satisfy \eqref{eq:4+0} or
\begin{equation}\label{eq:*2+2}
1+ 2 d_1^2 + 2d_4^2  + d_5^2+d_6^2 = \frac{1}{4} (d_1^4+d_4^4+\e d_1^2 d_4^2), \text{ with } 2 \mid d_1, d_2 \text{ and } \e=0,1, -1.
\end{equation}
 We have shown that the dimensions $d_i$ of $\mcC$ do not satisfy \eqref{eq:4+0}. By considering \eqref{eq:*2+2} modulo 2, we may assume that $d_5$ is odd and $d_6$ is even. Modulo 4,  \eqref{eq:*2+2} then becomes
$$
0 \equiv  1+ d_5^2 \mod 4,
$$
but this is impossible as $d_5^2\equiv 1\mod 4$. Therefore, $(3,4)$ is not a 5-support cycle if $\hs = (1,2)(3,4)$. In particular, $(1,2)$ is the unique $5$-support of $\s$. By Theorem \ref{t:gal}, we find $\dimC = \frac{5 d_1^4}{4}$ and $d_1 = 2  n_1$, for some positive integer $n_1$.

Suppose there exists a permutation $\t \in \Gal(\mcC)$ which admits a cycle of length $\ge 2$ and disjoint from $(1,2)$, say $(3,4,\dots)$. Then the $d_i$ must satisfy:
\begin{equation}\label{eq:2+2}
  20 n_1^4 = 1+ 8 n_1^2 + 2 d_4^2+ d_5^2 + d_6^2\,.
\end{equation}
Then $d_5$  and $d_6$ must have opposite parities, otherwise the left hand side of \eqref{eq:2+2} would be congruent to 1 modulo 2. We may assume $d_5$ is odd and $d_6 = 2 n_6$ for some positive integer $n_6$. Now, \eqref{eq:2+2} modulo 4 yields
$$
0 \equiv 2  + 2 d_4^2 \mod 4\,.
$$
This forces $d_4$ to be odd, and we have
$$
 \dimC \equiv 4 +  4 n_6^2 \mod 8\,.
$$
Thus, $n_6$ must be odd, $8 \mid \dimC$, and so $n_1$ is also even. Let $d_1 = 4 m_1$ for some positive integer $m_1$.  Now, $\frac{\dimC}{d_j^2} = \frac{2^6 \cdot 5 m_1^4}{d_j^2} > 7$ for $j=1,\dots,6$. But this contradicts that $\frac{\dimC}{d_{\max}^2} < 7$, see Remark \ref{remark-dim-max}. Therefore, no permutation $\t \in \Gal(\mcC)$  admits a non-trivial cycle disjoint from $(1,2)$. Hence, $\Gal(\mcC) = \langle \hs \rangle$ and $\hs=(1,2)$, which leads to a contradiction of Theorem \ref{t:gal}.
\end{proof}
\bibliographystyle{plain}

\end{document}